\documentclass[12pt]{article}

\usepackage{fullpage}
\usepackage{amsmath}
\usepackage{amssymb}
\usepackage{amsthm}
\usepackage{graphicx}
\usepackage{placeins}
\usepackage{extpfeil}
\usepackage{enumerate}
\usepackage{caption}
\usepackage{subcaption}
\usepackage{hyperref}
\usepackage{xcolor}
\usepackage{pbox}
\usepackage[numbers,sort&compress]{natbib}

\newtheorem{thm}{Theorem}[section]
\newtheorem{theorem}{Theorem}

\newtheorem{rmk}{Remark}[section]
\newtheorem{lem}{Lemma}[section]

\newtheorem{example}{Example}[section]

\newcommand{\dive}{{\rm div \hspace{0.05cm} }}
\newcommand{\Real}{{\rm Re \hspace{0.05cm} }}
\newcommand{\Imagine}{{\rm Im \hspace{0.05cm} }}
\newcommand*{\avint}{\mathop{\ooalign{$\int$\cr$-$}}}
\newcommand{\R}{\mathbb{R}}
\newcommand{\bS}{\mathbb{S}}

\begin{document}

\title{Removable singularity of $(-1)$-homogeneous solutions of stationary Navier-Stokes equations}
\author{
	Li Li\footnote{School of Mathematics and Statistics, Ningbo University, 818 Fenghua Road, Ningbo, Zhejiang 315211, China. Email: lili2@nbu.edu.cn. },  
	YanYan Li\footnote{Department of Mathematics, Rutgers University, 110 Frelinghuysen Road, Piscataway, NJ 08854, USA. Email: yyli@math.rutgers.edu. }, 
	Xukai Yan\footnote{Department of Mathematics, Oklahoma State University, 401 Mathematical Sciences Building, Stillwater, OK 74078, USA. Email: xuyan@okstate.edu. }}
\date{}
\maketitle

\abstract{
	We study the removable singularity problem for $(-1)$-homogeneous solutions of the three-dimensional incompressible stationary Navier-Stokes equations with singular rays. We prove that any local $(-1)$-homogeneous solution $u$ near a potential singular ray from the origin, which passes through a point $P$ on the unit sphere $\bS^2$, can be smoothly extended across $P$ on $\bS^2$, provided that $u=o(\ln \text{dist} (x, P))$ on $\bS^2$. The result is optimal in the sense that for any $\alpha>0$, there exists a local $(-1)$-homogeneous solution near $P$ on $\bS^2$, such that $\lim_{x\in \mathbb{S}^2, x\to P}|u(x)|/\ln \text{dist}(x, P)=-\alpha$. Furthermore, we discuss the behavior of isolated singularities of $(-1)$-homogeneous solutions and provide examples from the literature that exhibit varying behaviors. We also present an existence result of solutions with any finite number of singular points located anywhere on $\bS^2$. 
}

\section{Introduction}\label{sec:intro}

Consider the three-dimensional incompressible stationary Navier-Stokes equations, 
\begin{equation}\label{eq:NS}
	\left\{
	\begin{aligned}
		& -\Delta u + ( u \cdot \nabla ) u + \nabla p = 0, \\ 
		& \dive u=0,
	\end{aligned}
	\right.
\end{equation}
where $u: \mathbb{R}^3\to\mathbb{R}^3$ is the velocity vector and $p:\mathbb{R}^3\to\mathbb{R}$ is the pressure. These equations are invariant under the scaling $u(x)\to \lambda u(\lambda x)$ and $p(x)\to \lambda^2 p(\lambda x)$ for any $\lambda>0$. It is natural to study solutions which are invariant under this scaling. For such solutions, $u$ is $(-1)$-homogeneous and $p$ is $(-2)$-homogeneous, and we call them $(-1)$-homogeneous solutions according to the homogeneity of $u$. In general, a function $f$ is said to be $(-k)$-homogeneous if $f(x)= \lambda^k f(\lambda x)$ for any $\lambda>0$. 

Let $x=(x_1,x_2,x_3)$ be the Euclidean coordinates and $e_1=(1,0,0)$, $e_2=(0,1,0)$, $e_3=(0,0,1)$ be the standard basis. In this paper, we denote $x'=(x_1,x_2)$ and $\nabla'=(\partial_1, \partial_2)$. Let $(r,\theta, \phi)$ be the spherical coordinates, where $r$ is the radial distance from the origin, $\theta$ is the angle between the radial vector and the positive $x_3$-axis, and $\phi$ is the meridian angle about the $x_3$-axis. A vector field $u$ can be written as
\[
	u = u_r e_r + u_\theta e_{\theta} + u_\phi e_{\phi},
\]
where
\[
	e_r = \left(
	\begin{matrix}
		\sin\theta\cos\phi \\
		\sin\theta\sin\phi \\
		\cos\theta
	\end{matrix} \right), \hspace{0.5cm}
	e_{\theta} = \left(
	\begin{matrix}
		\cos\theta\cos\phi \\
		\cos\theta\sin\phi \\
		-\sin\theta	
	\end{matrix} \right), \hspace{0.5cm}
	e_{\phi} = \left(
	\begin{matrix}
		-\sin\phi \\ \cos\phi \\ 0
	\end{matrix} \right).
\]
A vector field $u$ is \emph{axisymmetric} if $u_r$, $u_{\theta}$ and $u_{\phi}$ are independent of $\phi$, and \emph{no-swirl} if $u_{\phi}\equiv 0$. 

In 1944, Landau \cite{Landau} discovered a 3-parameter family of explicit $(-1)$-homogeneous solutions of the stationary Navier-Stokes equations in $C^\infty(\mathbb{R}^3\setminus\{0\})$, see also \cite{Slezkin} and \cite{Squire}. These solutions, now called \emph{Landau solutions}, are axisymmetric with no swirl and have exactly one point singularity at the origin. Tian and Xin proved in \cite{TianXin} that all $(-1)$-homogeneous, axisymmetric nonzero solutions of (\ref{eq:NS}) in $C^\infty(\mathbb{R}^3\setminus\{0\})$ are Landau solutions. \v{S}ver\'{a}k established the following result in 2006:
\begin{theorem}[\cite{Sverak}]\label{thm:Sverak}
	All $(-1)$-homogeneous nonzero solutions of (\ref{eq:NS}) in $C^2(\mathbb{R}^3\setminus\{0\})$ are Landau solutions.
\end{theorem}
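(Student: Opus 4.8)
My plan is to pass from the classification on $\R^3\setminus\{0\}$ to a rigidity problem on the sphere $\bS^2$, and then to match the resulting profiles against the known Landau family, using the result of \cite{TianXin} as the final step. First I would extract the structural consequences of the hypotheses. Since $u\in C^2(\R^3\setminus\{0\})$ is $(-1)$-homogeneous, its restriction to $\bS^2$ is a smooth bounded vector field, so $|u(x)|\le C|x|^{-1}$ and $|p(x)|\le C|x|^{-2}$, and the whole solution is determined by the pair obtained by restricting $u$ and $r^2p$ to $\bS^2$. Inserting the homogeneous ansatz into (\ref{eq:NS}) converts the momentum equation together with $\dive u=0$ into an elliptic system on $\bS^2$ with quadratic nonlinearity; an elliptic bootstrap upgrades the $C^2$ solution to $C^\infty$, so there is no regularity obstruction and I may treat the profile as smooth.

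Second, I would show that $u$ extends across the origin as a distributional solution carrying a point force. Because $u\otimes u=O(|x|^{-2})$ is locally integrable in three dimensions while $\Delta u=O(|x|^{-3})$ is not, testing (\ref{eq:NS}) against $\varphi\in C_c^\infty(\R^3)$ concentrates a contribution at the origin, so there is a vector $b\in\R^3$ with
\[ -\Delta u + \dive(u\otimes u) + \nabla p = b\,\delta_0, \qquad \dive u = 0 \]
in $\mathcal{D}'(\R^3)$; homogeneity makes $b$ computable as a radius-independent flux of the stress tensor over any centered sphere. For every $b$ there is a Landau solution $U^b$, axisymmetric and swirl-free about $b/|b|$, solving the same forced system with the same $b$. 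It therefore suffices to prove a uniqueness statement: a $(-1)$-homogeneous solution carrying force $b$ must equal $U^b$. I would reduce this further to a symmetry statement — that $u$ is axisymmetric about the axis $b/|b|$ — because once axisymmetry is known (after rotating so that $b$ points along the $x_3$-axis), \cite{TianXin} identifies $u$ as the unique axisymmetric Landau solution with that force, in particular forcing its swirl to vanish.

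The symmetry/uniqueness step is the main obstacle, since (\ref{eq:NS}) is a system and admits no direct maximum principle. The scalar quantity I would build the argument around is the Bernoulli head $\Pi=p+\tfrac12|u|^2$, which satisfies the scalar identity
\[ -\Delta \Pi + u\cdot\nabla \Pi = -|\operatorname{curl} u|^2 \le 0 \]
and hence does obey a maximum principle; being $(-2)$-homogeneous it decays at infinity, and I would use this together with the a priori bounds above to constrain the profile on $\bS^2$. For small $|b|$ one can instead linearize about $U^b$ and invert the linearized sphere operator, whose finite-dimensional kernel is generated by the symmetries of the problem, yielding local uniqueness.

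The genuinely hard part is to propagate uniqueness to arbitrary $|b|$. I would set up a continuity/degree argument along the ray $t\mapsto tb$, $t\in[0,1]$, using the Liouville fact that $b=0$ forces $u\equiv 0$ as the base case and the homogeneity-driven a priori estimates to prevent blow-up of the sphere profiles along the ray. Closing this global continuation — ruling out the bifurcation of asymmetric solution branches at large force — is where the real work lies, and it is the step I expect to resist any soft argument.
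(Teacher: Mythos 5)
There is a genuine gap, and you have in fact flagged it yourself: the entire content of the theorem is concentrated in the step you defer to the end, namely ruling out non-axisymmetric $(-1)$-homogeneous profiles carrying a given force $b$ (equivalently, preventing bifurcation of asymmetric branches along the ray $t\mapsto tb$). Everything before that point is either routine (smoothness of the profile on $\bS^2$, the computation of the point force $b\,\delta_0$ from the stress flux, the existence of a Landau solution $U^b$ for each $b\neq 0$) or is asserted without proof at a place where real work is needed. In particular: (i) the ``base case'' that $b=0$ forces $u\equiv 0$ is not a soft Liouville fact for $(-1)$-homogeneous fields --- the profile on $\bS^2$ can be large, so no small-data uniqueness applies, and in \v{S}ver\'{a}k's paper this case is only obtained as a byproduct of the full classification; (ii) local uniqueness near $U^b$ by inverting the linearized operator on $\bS^2$ requires identifying its kernel as the symmetry directions, which is itself a nontrivial spectral statement; (iii) a degree or continuation argument needs compactness and a priori bounds on the sphere profiles that are uniform over the (unknown) solution set, and nothing in your setup supplies them. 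The Bernoulli head identity $\Delta\Pi-u\cdot\nabla\Pi=|\operatorname{curl}u|^2$ is correct, but since $\Pi$ is $(-2)$-homogeneous its maximum over any punctured ball is attained in the limit $|x|\to 0$ regardless of the profile, so the maximum principle on annuli yields no constraint on $\Pi|_{\bS^2}$ without further input.

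The idea that actually closes the argument in \cite{Sverak} --- and which your proposal does not contain --- is non-perturbative and scalar from the start: one shows that the spherical profile of any such solution can be written as $u=\nabla_{\bS^2}\varphi-\Delta_{\bS^2}\varphi\, e_r$ for a single scalar $\varphi$ on $\bS^2$ satisfying the Liouville equation $-\Delta_{\bS^2}\varphi+2=2e^{\varphi}$ (see equation (\ref{eq:liouville:S2}) and the discussion in Section \ref{sec:3} of this paper). Globally smooth solutions of that equation on $\bS^2$ are completely classified: via stereographic projection they correspond to $-\Delta\xi=e^{\xi}$ on $\mathbb{R}^2$ with the Liouville representation (\ref{eq:liouville:xi}), and regularity on all of $\bS^2$ forces the meromorphic function $f$ to be a M\"obius transformation, which yields exactly the Landau family (including $b=0\Rightarrow u\equiv 0$). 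If you want to pursue your force-based uniqueness route instead, you would need to supply a genuinely new mechanism for the large-$|b|$ regime; as written, the proposal reduces the theorem to an open step rather than proving it.
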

He also proved in the same paper that there is no nonzero $(-1)$-homogeneous solution of the incompressible stationary Navier-Stokes equations in $C^{2}(\mathbb{R}^n\setminus\{0\})$ for $n\ge 4$. In dimension two, he characterized all such solutions satisfying a zero flux condition. Homogeneous solutions of (\ref{eq:NS}) have been studied in other works as well, see \cite{CKPW, Goldshtik, Gusarov, JLL, KT, LZZ, PP1, PP2, PP3, Serrin, Slezkin, Squire, Wang, Yatseyev, ZZ}. There have also been works on homogeneous solutions of Euler's equations, see \cite{Abe, LS, Shvydkoy} and the references therein. 

For $(-1)$-homogeneous solutions $(u, p)$ in $\mathbb{R}^3\setminus\{0\}$, (\ref{eq:NS}) can be reduced to a system of partial differential equations of $(u, p)$ on $\mathbb{S}^2$. For any set $\Omega\subset\bS^2$, a $(-1)$-homogeneous solution $(u, p)$ on $\Omega$ is understood to have been extended to the set $\{x\in \mathbb{R}^3\mid x/|x|\in \Omega\}$ so that $u$ is $(-1)$-homogeneous and $p$ is $(-2)$-homogeneous. We use this convention throughout the paper unless otherwise stated. 

Theorem \ref{thm:Sverak} has classified all $(-1)$-homogeneous solutions of (\ref{eq:NS}) in $C^2(\mathbb{S}^2)$. A natural next step is to study $(-1)$-homogeneous solutions of (\ref{eq:NS}) in $C^{2}(\mathbb{S}^2\setminus\{P_1,..., P_m\})$ for finitely many points $P_1,..., P_m$ on $\mathbb{S}^2$.  In \cite{LLY1}-\cite{LY}, we 
 studied $(-1)$-homogeneous axisymmetric solutions of (\ref{eq:NS}) in $C^2(\mathbb{S}^2\setminus\{S, N\})$, where $S$ is the south pole and $N$ is the north pole. In \cite{LLY1},  all $(-1)$-homogeneous axisymmetric no-swirl solutions in $C^2(\mathbb{S}^2\setminus\{S\})$ were classified, and the existence and non-existence results of $(-1)$-homogeneous axisymmetric solutions with nonzero swirl in $C^2(\mathbb{S}^2\setminus\{S\})$ were established. The asymptotic expansions of all local $(-1)$-homogeneous axisymmetric solutions of (\ref{eq:NS}) near a singular ray were also derived in \cite{LLY1}. In \cite{LLY2},  all $(-1)$-homogeneous axisymmetric no-swirl solutions in $C^2(\mathbb{S}^2\setminus\{S, N\})$ were classified. In \cite{LLY3},  the existence and non-existence results for $(-1)$-homogeneous axisymmetric solutions in $C^2(\mathbb{S}^2\setminus\{S, N\})$ with nonzero swirl were established. In \cite{LLY4}, the vanishing viscosity limit of $(-1)$-homogeneous axisymmetric no-swirl solutions of (\ref{eq:NS}) in $C^2(\mathbb{S}^2\setminus\{S, N\})$ was studied. In \cite{LY}, the asymptotic stability of the least singular homogeneous axisymmetric no-swirl solutions under $L^2$-perturbations was proved. Note that the asymptotic stability of Landau solutions under $L^2$-perturbations was proved by Karch and Pilarczyk in \cite{KP}. 

To study the $(-1)$-homogeneous solutions of (\ref{eq:NS}) with finite singularities $P_1, ..., P_m$ on $\bS^2$, it is helpful to first analyze the behavior of solutions near an isolated singularity on $\bS^2$. This paper studies the following removable singularity problem: For local $(-1)$-homogeneous solutions (not necessarily axisymmetric) of (\ref{eq:NS}) near a potential singular ray, under what condition the singular ray is removable? Namely, under what condition the solution can be smoothly extended across the singular ray except a possible singularity at the origin? 

There has been much study on the behavior of solutions of (\ref{eq:NS}) near isolated singularities in $\R^3$, see e.g. \cite{DE70, Shapiro1, Shapiro2, Shapiro3, CK00, KK06, KS11, MT12, NP00, Sverak}. 

Without loss of generality, we consider local $(-1)$-homogeneous solutions of (\ref{eq:NS}) near a potential singular ray from the origin passing through the south pole $S$. It is equivalent to studying the solutions in a small neighborhood of $S$ on $\mathbb{S}^2$. The asymptotic expansions of axisymmetric solutions obtained in \cite{LLY1} suggest that the least singular behavior near a singular ray through $S$ is in the order of $\ln {\rm dist}(x,S)$. Therefore, a natural removable singularity condition is $u=o(\ln {\rm dist}(x,S))$. Denote $B_{\delta}(S) := \{ x \in \mathbb{R}^3 \mid {\rm dist}(x,S)<\delta\}$ for $\delta>0$. Clearly, ${\rm dist}(x,S)/|x'|\to 1$ as $x\to S$ on $\bS^2$. We have the following result.  

\begin{thm}\label{thm_main}
	Let $\delta>0$, 
	$(u, p)\in C^{2}(\mathbb{S}^2\cap B_{\delta}(S)\setminus\{S\})$ be a $(-1)$-homogeneous solution of (\ref{eq:NS}). If 
	\begin{equation}\label{eq:thm:main}
		\lim_{x\in \mathbb{S}^2, x\to S}\frac{|u(x)|}{\ln {\rm dist}(x,S)}=0. 
	\end{equation}
	 Then $(u, p)$ can be extended as a $C^2$ function in $\mathbb{S}^2\cap B_{\delta}(S)$. 
\end{thm}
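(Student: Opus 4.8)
The plan is to reduce the problem to an interior regularity estimate for the Navier–Stokes system, using the $(-1)$-homogeneity to transfer decay in the radial direction into control over the singular ray. The first step is to set up good coordinates near $S$: writing $u$ in terms of $\theta$ (the colatitude, so $\mathrm{dist}(x,S)\approx\pi-\theta$ near the south pole) and $\phi$, the hypothesis \eqref{eq:thm:main} says $|u|=o(\ln|x'|)$ as $x\to S$ on $\bS^2$. By homogeneity this is equivalent to a growth bound in $\R^3$: for $x$ in a small conical neighborhood of the negative $x_3$-axis, $|u(x)|\le \varepsilon\,|x|^{-1}\,\bigl|\ln(\mathrm{dist}(x,S)/|x|)\bigr|$ for any $\varepsilon>0$ once we are close enough. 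The strategy is to obtain an a priori pointwise bound $|u(x)|\le C/\mathrm{dist}(x,\text{ray})$ near the ray — i.e.\ the natural scaling of the equation — and then bootstrap to smoothness.

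**The core idea** is a blow-up / scaling argument at the ray analogous to \v{S}ver\'ak's treatment of the isolated singularity at the origin, but now performed transversally to the singular line. Fix a point $x_0$ on $\bS^2$ near $S$ at distance $d=\mathrm{dist}(x_0,S)$ from the ray, and rescale by setting $v(y)=d\,u(x_0+dy)$, $q(y)=d^2 p(x_0+dy)$ on a unit ball, so that $(v,q)$ again solves \eqref{eq:NS}. The logarithmic growth hypothesis must be shown to force $\|v\|_{L^\infty}$, and more importantly the relevant scale-invariant norms ($L^3$ of $v$, or the Morrey-type quantity $\int |\nabla v|^2$), to be \emph{uniformly small} as $d\to 0$. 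Concretely, I would estimate the local energy $\int_{B_{2d}(x_0)}|\nabla u|^2$ and the local $L^3$ norm of $u$ on annuli shrinking toward the ray; the $o(\ln)$ bound is exactly what makes these scale-invariant quantities tend to zero rather than merely stay bounded. Once the rescaled solution has small scale-invariant norm, the $\varepsilon$-regularity theory for stationary Navier–Stokes (Caffarelli–Kohn–Nirenberg type, or the simpler stationary small-data regularity) yields $|\nabla u(x_0)|\le C/d^2$ and hence $|u(x_0)|\le C/d$, which is precisely the bound one would have for a solution smooth across the ray.

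**The hard part** will be upgrading this pointwise bound to an actual $C^2$ extension across the ray, not merely a bound that deteriorates like $1/d$. The mechanism is that the improved bound $|u|\le C/\mathrm{dist}(x,\text{ray})$ places $u$ back in a scale-invariant class on which one can iterate: rewriting the PDE on $\bS^2$ (where the reduced system is a semilinear elliptic system in $(\theta,\phi)$ with a coefficient that is singular like $1/\sin\theta$ near $S$), I would treat the nonlinear term $(u\cdot\nabla)u$ as a controlled right-hand side and invoke linear elliptic estimates. The pressure requires care: it is determined up to the nonlocal term, and one must show $p$ inherits matching decay, which I would extract by taking the divergence of the momentum equation to get $-\Delta p=\partial_i\partial_j(u_iu_j)$ and estimating the Newtonian potential. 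The final step is a removable-singularity argument for the linearized problem: a $(-1)$-homogeneous solution of the Stokes-type system near the ray with at worst $1/\mathrm{dist}$ growth, once known to lie in $L^2$ near the singular set of codimension one, must extend smoothly — this is where the precise $o(\ln)$ threshold is essential, since the explicit examples with $|u|/\ln\to-\alpha$ announced in the abstract show the borderline logarithmic growth genuinely fails to be removable. I expect the principal obstacle to be establishing the uniform smallness of the scale-invariant norms from the one-sided logarithmic hypothesis, since the singularity here is along a ray (codimension two in $\R^3$, codimension one on $\bS^2$) rather than a point, so the standard capacity/energy estimates must be adapted to this geometry.
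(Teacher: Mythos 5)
Your overall architecture --- a transversal rescaling at distance $d=|x'|$ from the ray to obtain pointwise derivative bounds, control of the pressure through $-\Delta p=\partial_i\partial_j(u_iu_j)$, a capacity-type removability argument across the codimension-two set, and a final Stokes bootstrap --- matches the paper's strategy, and the rescaling step works essentially as you describe (the paper normalizes by $\ln\bar\rho$ so that the rescaled system is a small perturbation of the Stokes system; your version, with rescaled data small in $L^\infty$, is equivalent). Note, however, that what the rescaling actually yields, and what is needed downstream, is $|\nabla^k u|=o(|\ln |x'||/|x'|^k)$ and $|p|=o(|\ln |x'||/|x'|)$; your stated target $|u(x_0)|\le C/d$ is weaker than the hypothesis $o(\ln d)$ you start from, so that intermediate ``upgrade'' is not the right goal, and a smooth $(-1)$-homogeneous extension would in fact be bounded on $\bS^2$, not merely $O(1/d)$.

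The genuine gap is the pressure. Representing $p$ by the Newtonian potential of $\partial_i\partial_j(u_iu_j)$ captures only a particular solution; the remainder is harmonic in the cross-sectional variables $x'$ for each fixed $x_3$ (after absorbing the $\partial_3$ contributions), and a priori it can contain the modes $a_0(x_3)\ln|x'|+a_1(x_3)x_1/|x'|^2+b_1(x_3)x_2/|x'|^2$, all consistent with the scaling bound $p=o(|\ln|x'||/|x'|)$. The $x_j/|x'|^2$ modes are exactly borderline: their $L^2$ norm on $\{|x'|>\epsilon\}$ is of order $\sqrt{|\ln\epsilon|}$, which is precisely what the logarithmic cutoff argument cannot absorb, since the cutoff gradient has $L^2$ norm of order $1/\sqrt{|\ln\epsilon|}$ and one therefore needs $o(\sqrt{|\ln\epsilon|})$, not $O(\sqrt{|\ln\epsilon|})$. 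The paper devotes Lemma \ref{lem:2_2} (a Laurent expansion of the cross-sectionally harmonic part, with the modes $z^{-m}$, $m\le -2$, excluded by an $L^s$ bound with $s>1$) and half of Lemma \ref{lem:2_3} (testing the first momentum equation against $g_{\epsilon}x_1x_2/|x'|^2$ and exploiting parity in $x_2$ to conclude $a_1\equiv b_1\equiv 0$) to exactly this point; your proposal has no mechanism for it. Relatedly, your removability step as stated (``at worst $1/\mathrm{dist}$ growth, once known to lie in $L^2$'') is internally inconsistent near a codimension-two set in $\R^3$: $1/|x'|$ is not locally square-integrable there, and neither $\nabla u$ nor $p$ is a priori square-integrable up to the ray. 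The correct quantitative input is $\|\nabla u\|_{L^2}+\|p\|_{L^2}=o(\sqrt{|\ln\epsilon|})$ on the complement of the $\epsilon$-neighborhood, paired against the $O(1/\sqrt{|\ln\epsilon|})$ cutoff gradient; this is where the hypothesis $o(\ln)$, as opposed to $O(\ln)$, genuinely enters, and it must be proved by an energy estimate (the first part of Lemma \ref{lem:2_3}) rather than asserted.
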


\begin{rmk}
	The above removable singularity result is optimal in the following sense: For any $\alpha>0$, there exists a $(-1)$-homogeneous axisymmetric no-swirl solution $(u, p)\in C^{\infty}(\mathbb{S}^2\setminus\{S, N\})$ of (\ref{eq:NS}), such that $\lim_{x\in \mathbb{S}^2, x\to S}|u(x)|/\ln {\rm dist}(x, S)=$\\$\lim_{x\in \mathbb{S}^2, x\to N}|u(x)|/\ln {\rm dist}(x, N) = - \alpha$.
	Examples of such solutions can be found in \cite{LLY2}, see also Example \ref{example:least:sing} in Section \ref{sec:3}. 
	On the other hand, 
	there does not exist $(-1)$-homogeneous axisymmetric solution $u\in C^2(\mathbb{S}^2\setminus\{P\})$ of (\ref{eq:NS}) satisfying $0<\limsup_{|x|=1,x\to P}|u(x)|/|\ln {\rm dist}(x, P)|<\infty$, where $P=S$ or $N$, see Lemma \ref{lem:3_1}. 
\end{rmk}

Note that all $(-1)$-homogeneous axisymmetric solutions of (\ref{eq:NS}) in $C^{2}(\bS^2\setminus\{S, N\})$ satisfying $u=O(\ln |x'|)$ as $x\to S$ or $N$ must have no swirl. see Lemma \ref{lem:3_1}. 

The organization of the paper is as follows: Theorem \ref{thm_main} is proved in Section \ref{sec:2}. In Section \ref{sec:3}, we provide further discussion on the behavior of isolated singularities of $(-1)$-homogeneous solutions of equation (\ref{eq:NS}). Specifically, we describe the asymptotic behavior of $(-1)$-homogeneous axisymmetric solutions of equation (\ref{eq:NS}) obtained in \cite{LLY1}-\cite{LLY3}. Additionally, we list and discuss several special examples of $(-1)$-homogeneous solutions of equation (\ref{eq:NS}) from the literature. In Theorem \ref{thm:3_1}, we also present an existence result on $(-1)$-homogeneous solutions of equation (\ref{eq:NS}) that have exactly $m$ singularities on $\bS^2$, where $m\ge 2$. 

\medskip

\noindent
{\bf Acknowledgment}. The work of the first named author is partially supported by NSFC grant 12271276 and ZJNSF grant LR24A010001. The work of the second named author is partially supported by NSF grant DMS-2000261, DMS-2247410, and Simons Fellows Award 677077. The work of the third named author is partially supported by 
Simons Foundation Travel Support for Mathematicians 962527 and NSF Career Award DMS-2441137.

\section{Proof of Theorem \ref{thm_main}}\label{sec:2}

For real numbers $a<b$ and $R>0$, denote
\begin{equation}
	\Omega_{a, b, R} := \{x\in \mathbb{R}^3 \mid a<x_3<b, |x'|<R\}. 
\end{equation}

\begin{lem}\label{lem:2_1}
	For $M, R>0$ and $a<b$ satisfying $ab>0$, let $(u,p)$ be a $C^{2}$ solution of 
	\begin{equation}
	\left\{ 
	\begin{aligned}
		& - \Delta u + \nabla p = \lambda u \cdot \nabla u, \quad && x\in\Omega_{a, b, R}\setminus\{x'=0\},\\
		& \dive u=0, \quad && x\in \Omega_{a, b, R}\setminus\{x'=0\},
	\end{aligned}
	\right.
	\end{equation}
	satisfying $u=o(\ln |x'|)$ as $x'\to 0$, uniform for $x\in \Omega_{a, b, R}\setminus\{x'=0\}$ and $\lambda\in [-M, M]$. Then $u$ is smooth in $\Omega_{a, b, R}\setminus \{x'=0\}$, and for any $a<a'<b'<b$, $0<R'<R$, and any integer $k\ge 1$, 
	\begin{equation}\label{eq:lem:2_1}
		| \nabla^k u | = o \left(\frac{\ln |x'|}{|x'|^k}\right), 
		\quad |\nabla^{k-1} p|=o\left(\frac{\ln |x'|}{|x'|^{k}}\right), \quad \textrm{as } x'\to 0, 
	\end{equation}
	uniform for $x\in \Omega_{a', b', R'}\setminus\{x'=0\}$ and $\lambda\in [-M, M]$. 
\end{lem}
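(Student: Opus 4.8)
The plan is to reduce the statement to a single scale-invariant interior estimate, exploiting that the system is invariant under the rescaling $v(y) = \rho\,u(x_0 + \rho y)$, $q(y) = \rho^2 p(x_0 + \rho y)$: a direct computation shows $(v,q)$ solves the same equation with the same $\lambda$ on $B_1(0)$ whenever $(u,p)$ solves it on $B_\rho(x_0)$. I would fix $a<a'<b'<b$, $0<R'<R$, take an arbitrary $x_0 = (x_0', x_{0,3}) \in \Omega_{a',b',R'}$ with $|x_0'|$ small, and set $\rho = |x_0'|/4$. Then $B_\rho(x_0) \subset \Omega_{a,b,R}\setminus\{x'=0\}$, since every $x \in B_\rho(x_0)$ has $|x'| \ge |x_0'| - \rho \ge |x_0'|/2 > 0$ and $x_3$ stays inside $(a,b)$ once $|x_0'|$ is small. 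Writing $\epsilon(s) := \sup\{|u(x)|/|\ln|x'||\mid x\in\Omega_{a',b',R'},\ |x'|\le s,\ \lambda\in[-M,M]\}$, the hypothesis gives $\epsilon(s)\to 0$ as $s\to 0$, and on $B_\rho(x_0)$ one has $|u|\le \epsilon(2\rho)\,|\ln|x'||\le C\epsilon(2\rho)\,|\ln\rho|$; hence the rescaled velocity satisfies $\eta := \|v\|_{L^\infty(B_1)} = \rho\,\|u\|_{L^\infty(B_\rho(x_0))} \le C\rho|\ln\rho|\,\epsilon(2\rho)$.

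The heart of the matter is the following uniform interior estimate, which I would prove separately: there exist $\eta_0>0$ and constants $C_k = C_k(M)$ so that every $C^2$ solution $(v,q)$ of $-\Delta v + \nabla q = \lambda v\cdot\nabla v$, $\dive v = 0$ in $B_1$ with $|\lambda|\le M$, $\|v\|_{L^\infty(B_1)}\le\eta\le\eta_0$, and $q$ normalized by zero mean on $B_1$, satisfies $\|\nabla^k v\|_{L^\infty(B_{1/2})} + \|\nabla^{k-1}q\|_{L^\infty(B_{1/2})}\le C_k\,\eta$ for every $k\ge 1$. I would establish this by a bootstrap in which the quadratic term $\lambda v\cdot\nabla v$ is treated as a forcing term for the Stokes operator. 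First, a Caccioppoli/local energy estimate (testing the equation against $v\zeta^2$ for a cutoff $\zeta$) together with the pressure bound coming from $-\Delta q = \lambda\,\partial_i\partial_j(v_iv_j)$ and interior Calder\'on--Zygmund estimates gives $\|\nabla v\|_{L^2(B_{3/4})} + \|q\|_{L^2(B_{3/4})}\le C(\eta + \eta^2)\le C\eta$. Then interior $L^p$ estimates for the Stokes system, applied on a decreasing family of balls with forcing $f = \lambda v\cdot\nabla v$, raise the integrability, and Schauder estimates finally deliver $C^{k,\alpha}$ bounds; at each stage the nonlinear contribution is of higher order in $\eta$ and is absorbed using $\eta\le\eta_0$, which keeps every bound linear in $\eta$ and uniform in $\lambda\in[-M,M]$. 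The same bootstrap shows that $v$, and hence $u$, is smooth away from the axis.

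Finally, I would scale back. The interior estimate applied to the rescaled solution, together with $\nabla^k_y v = \rho^{k+1}\nabla^k_x u$ and $\nabla^{k-1}_y q = \rho^{k+1}\nabla^{k-1}_x p$, yields at the center $x_0$ the bounds $|\nabla^k u(x_0)| \le C_k\rho^{-(k+1)}\eta \le C_k|x_0'|^{-k}|\ln|x_0'||\,\epsilon(2\rho)$ and $|\nabla^{k-1}p(x_0)| \le C_k\rho^{-(k+1)}\eta \le C_k|x_0'|^{-k}|\ln|x_0'||\,\epsilon(2\rho)$. Since $x_0$ was arbitrary and $\epsilon(2\rho)\to 0$ as $|x_0'|\to 0$, these are precisely the asserted estimates $|\nabla^k u| = o(\ln|x'|/|x'|^k)$ and $|\nabla^{k-1}p| = o(\ln|x'|/|x'|^{k})$, uniform for $x\in\Omega_{a',b',R'}$ and $\lambda\in[-M,M]$. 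The main obstacle is the interior estimate of the second paragraph: obtaining a local handle on the pressure (which is only defined up to a constant) and propagating the smallness through the nonlinear bootstrap so that the final constants are genuinely linear in $\eta$ and independent of $\lambda$; the scaling reduction and the scaling-back step are then bookkeeping.
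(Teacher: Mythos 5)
Your rescaling-and-bootstrap strategy is essentially the paper's own (the paper normalizes the blow-up by $\ln\bar\rho$ rather than tracking a small parameter $\eta=C\rho|\ln\rho|\,\epsilon(2\rho)$, but these are equivalent bookkeeping choices), and it correctly delivers the estimates for $|\nabla^k u|$ with $k\ge 1$ and for $|\nabla^{k-1}p|$ with $k\ge 2$, since in those cases at least one derivative falls on the pressure and the normalization constant is irrelevant.

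There is, however, a genuine gap at $k=1$ for the pressure. Your interior estimate controls $q$ only after normalizing it to have zero mean on $B_1$, so what the scaling-back step actually yields at the center is $\bigl|p(x_0)-\avint_{B_{\rho}(x_0)}p\bigr|\le C\rho^{-2}\eta=o\bigl(|\ln|x_0'||/|x_0'|\bigr)$, not a bound on $|p(x_0)|$ itself. The mean $\avint_{B_\rho(x_0)}p$ is a constant that depends on the ball and is not controlled by any purely local, scale-by-scale argument: the pressure is only determined up to a constant by the equation, and nothing in your scheme pins down how that constant behaves as $x_0'$ approaches the axis. Your closing remark that ``the scaling-back step is then bookkeeping'' is therefore not correct for this one case. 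To close the gap you need a global argument connecting $p$ near the axis to its (bounded) values at unit distance from the axis: this is what the paper does, writing $p(x_1,0,x_3)=p(1,0,x_3)-\int_{x_1}^{1}\partial_1 p(t,0,x_3)\,dt$ and feeding in the already-established bound $|\nabla p|=o(|\ln|x'||/|x'|^2)$ together with the elementary estimate $\int_{x_1}^{\delta}|\ln t|\,t^{-2}\,dt\le C|\ln x_1|/|x_1|$, which shows the integral is $o(|\ln x_1|/|x_1|)$. A secondary, more minor point: in your first bootstrap step, bounding $q$ via $-\Delta q=\lambda\,\partial_i\partial_j(v_iv_j)$ and Calder\'on--Zygmund theory controls only the Newtonian-potential part of $q$; the harmonic remainder is not controlled by the zero-mean normalization alone, and you should instead invoke an interior Stokes estimate that bounds $\inf_{c}\|q-c\|_{L^s(B_r)}$ by $\|v\|_{L^s(B_1)}$ plus the forcing (as in Theorem 2.2 of \cite{SverakTsai}, which the paper uses).
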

\begin{proof}
	The smoothness of $u$ in $\Omega_{a, b, R}\setminus\{x'=0\}$ follows from a bootstrap argument using standard estimates for Stokes equations. Now we prove (\ref{eq:lem:2_1}). Given $a<a'<b'<b$, let $\bar{x} \in \Omega_{a', b', R} \setminus\{x'=0\}$ be an arbitrary fixed point and $\bar{\rho}:=\min\{|\bar{x}'|/3, (R-|\bar{x}'|)/3, (b-b')/3, (a'-a)/3\}$. Define $\bar{u}: B_2\to\mathbb{R}^3$ and $\bar{p}: B_2\to\mathbb{R}$ by 
	\begin{equation*}
		\bar{u}(y) := \frac{u(\bar{x}+\bar{\rho}y)}{\ln \bar{\rho}}, \quad 
		\bar{p}(y) := \frac{\bar{\rho}}{\ln \bar{\rho}}p(\bar{x}+\bar{\rho}y), \quad y\in B_2,
	\end{equation*}
	where $B_2=B_2(0)\subset \mathbb{R}^3$ is the ball of radius $2$ centered at $0$. Then 
	\begin{equation}\label{eq:lem:2_1:pf:1}
		-\Delta \bar{u}+\nabla\bar{p}
		=(\lambda\bar{\rho}\ln \bar{\rho}) \bar{u}\nabla\bar{u}=:f, \quad \textrm{ in } B_2,
	\end{equation}
	and
	\begin{equation}\label{eq:lem:2_1:pf:2}
		\dive \bar{u}=0, \quad \textrm{ in }B_2.
	\end{equation} 
	By the assumption on $u$, we have
	\begin{equation}\label{eq:lem:2_1:pf:3}
		\sup_{B_2}|\bar{u}|=o(1), \quad \textrm{ as }\bar{\rho}\to 0. 
	\end{equation}
	So for any $1<s<\infty$, 
	\begin{equation}\label{eq:lem:2_1:pf:4}
		\| f \|_{W^{-1, s}(B_2)}\le |\lambda \bar{\rho}\ln \bar{\rho}| \| |\bar{u}|^2 \|_{L^s(B_2)}=o(1),
		\quad \textrm{as } \bar{\rho}\to 0. 
	\end{equation}
	Note that the convergence rates in this proof are uniform for $|\lambda|\le M$. 
	By (\ref{eq:lem:2_1:pf:1}), (\ref{eq:lem:2_1:pf:2}), (\ref{eq:lem:2_1:pf:3}) and (\ref{eq:lem:2_1:pf:4}), using interior estimates of the Stokes equations (see Theorem 2.2 in \cite{SverakTsai}), 
	we have, for any $1<s<\infty$ and $0<r<2$, that 
	\begin{equation}\label{eq:lem:2_1:pf:5}
		\| \bar{u} \|_{W^{1, s}(B_r)} + \inf_{c\in\mathbb{R}}\|\bar{p}-c\|_{L^s(B_r)} = o(1), \quad \textrm{as } \bar{\rho}\to 0. 
	\end{equation}
	By (\ref{eq:lem:2_1:pf:3}) and (\ref{eq:lem:2_1:pf:5}), we have $\|f\|_{L^s(B_r)}=o(1)$. Then by estimates for the Stokes equations (see e.g. Theorem IV.4.1 in \cite{Galdi}) and Sobolev embedding theorems, we have 
	\begin{equation}\label{eq:lem:2_1:pf:6}
		\|\bar u\|_{W^{2, s}(B_{r})}+\|\nabla \bar p\|_{L^{s}(B_{r})}=o(1) \textrm{ and } \|\bar{u}\|_{C^1(B_r)}=o(1)
	\end{equation}
	for any $0<r<2$. It follows that $\|f\|_{W^{1, s}(B_r)}=o(1)$ for any $1<s<\infty$. 

	By estimates for the Stokes equations and Poincar\'{e}'s inequality, we have, for any $l\ge 2$ and $0<r<r'<2$, that
	\begin{equation}\label{eq:lem:2_1:pf:7}
	\begin{split}
		\|\bar{u}\|_{W^{l, s}(B_r)}+\|\nabla \bar{p}\|_{W^{l-2, s}(B_r)} & \le C(\|f\|_{W^{l-2, s}(B_{r'})}+\|\bar{u}\|_{W^{1, s}(B_{r'})}+\|\bar{p}-\avint_{B_r}\bar{p}\|_{L^s(B_{r'})}) \\
		& \le C(\|f\|_{W^{l-2, s}(B_{r'})}+\|\bar{u}\|_{W^{1, s}(B_{r'})}+\|\nabla \bar{p}\|_{L^s(B_{r'})}) 
	\end{split}
	\end{equation}
	for some $C$ depending only on $s$, $r$, $r'$ and $l$. With a standard bootstrap argument using Sobolev embedding theorems, by (\ref{eq:lem:2_1:pf:6}) and (\ref{eq:lem:2_1:pf:7}) we have that 
	\[
		\|\bar{u}\|_{W^{l, s}(B_r)}+\|\nabla \bar{p}\|_{W^{l-2, s}(B_r)}=o(1), 
	\]
	for any $l\ge 3$ and $0<r<2$. Then by Sobolev embedding theorems, we have 
	\begin{equation*}
		\sup_{B_{r}}(|\nabla^k \bar{u}|+|\nabla^k\bar{p}|)=o(1), \quad \forall k\ge 1. 
	\end{equation*}
	So (\ref{eq:lem:2_1}) holds for all $k\ge 2$. For $k=1$, the first estimate in (\ref{eq:lem:2_1}) holds from the above. 

	Now we prove the second estimate in (\ref{eq:lem:2_1}) when $k=1$. We only need to prove it when $x_2=0$ and $x_1>0$. For any $0<x_1<R/2$, we have 
	\[
		p(x_1, 0, x_3) =p(1, 0, x_3)-\int_{x_1}^{1}\partial_1p(t, 0, x_3)dt. 
	\]
	Since $p\in C^{\infty}(\Omega_{a', b', R}\setminus\{x'=0\})$, we have $p(1, 0, x_3)=O(1)=o(\ln |x_1|/|x_1|)$. So we only need to show 
	\begin{equation}\label{eq:lem:2_1:pf:8}
		\lim_{x_1\to 0}\frac{\int_{x_1}^{1}\partial_1 p(t, 0, x_3) dt }{\big| \ln |x_1| \big| / |x_1|}=0. 
	\end{equation}
	To see this, note that we have proved (\ref{eq:lem:2_1}) holds for all $k\ge 2$, thus $\nabla p(x)=o(\ln |x'|/|x'|^2)$. So for any $\epsilon>0$, there exists some $\delta>0$, such that $|\nabla p|\le \epsilon | \ln |x'| | / |x'|^2$ for any $0<|x'|<\delta$. So 
	\[
	\begin{split}
		\frac{|\int_{x_1}^{1}\partial_1p(t, 0, x_3)dt| }{| \ln |x_1| |/|x_1|} 
		& \le \frac{|x_1|}{| \ln |x_1| |}\left(\int_{x_1}^{\delta}|\nabla p(t, 0, x_3)|+\int_{\delta}^{1}|\nabla p(t, 0, x_3)|\right) \\
		& \le \epsilon \frac{|x_1|}{| \ln |x_1| |}\int_{x_1}^{\delta} \frac{| \ln t |}{ t^2}dt+C\frac{|x_1|}{| \ln |x_1| |} \\
		& \le C\epsilon, \quad \textrm{ as }x_1\to 0, 
	\end{split}
	\]
	for some $C$ depending only on $\delta$. So (\ref{eq:lem:2_1:pf:8}) holds. 
	The lemma is proved. 
\end{proof}

Denote $D_R:=\{x'\in\mathbb{R}^2 \mid |x'|<R\}$ for any $R>0$. 

\begin{lem}\label{lem:2_2}
	Let $R>0$, 
	$a<b$ satisfying $ab>0$,
	and $F\in C^{\infty}(\bar{\Omega}_{a, b, R}\setminus\{x'=0\})$ be a $(-3)$-homogeneous vector-valued function.
	Suppose $q\in C^{\infty}(\bar{\Omega}_{a, b, R}\setminus\{x'=0\})$ is a $(-2)$-homogeneous function satisfying 
	\begin{equation*}
		-\Delta q=\dive F(x), 
		\quad \textrm{ in }\Omega_{a, b, R}\setminus\{x'=0\}. 
	\end{equation*}
	Assume there exists some $\delta\in (0, 2)$ such that 
	\begin{equation}\label{eq:lem:2_2:1}
		|q(x)| |x'|^{2-\delta}+\sum_{j=0}^{2}|\nabla^jF| |x'|^{j+2-\delta}=o(1)
	\end{equation}
	as $x'\to 0$ uniformly in $\Omega_{a, b, R}\setminus\{x'=0\}$. Then there exist some $h(x) \in L^{\infty}_{loc}((a, b), W^{1, s}(D_R))$ for any $1< s<\frac{2}{2-\delta}$, and $a_0(x_3), a_1(x_3), b_1(x_3)\in C(a, b)$, such that 
	\begin{equation}\label{eq:lem:2_2:2}
		q(x)=h(x)+a_0(x_3)\ln |x'|+a_1(x_3)\frac{x_1}{|x'|^2}+b_1(x_3)\frac{x_2}{|x'|^2}, \quad \textrm{ in }\Omega_{a, b, R}\setminus\{x'=0\}.
	\end{equation}
\end{lem}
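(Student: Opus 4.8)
The plan is to exploit the $(-2)$-homogeneity of $q$ to collapse the line singularity $\{x'=0\}$ into a \emph{point} singularity of a two-dimensional elliptic equation, and then to read off the admissible singular modes by an indicial analysis. Assume without loss of generality $a,b>0$ (the case $a,b<0$ is symmetric). Writing $q(x',x_3)=x_3^{-2}\tilde q(y')$ with $y'=x'/x_3$ and using Euler's relation $x\cdot\nabla q=-2q$, a direct computation turns $-\Delta q=\dive F$ into
\[
	-\Delta' \tilde q-(y'\cdot\nabla')^2\tilde q-5\,(y'\cdot\nabla')\tilde q-6\tilde q=\tilde G(y'),\qquad y'\in D_{R'}\setminus\{0\},
\]
where $\tilde G(y'):=(\dive F)(y',1)$ and $\Delta'=\partial_1^2+\partial_2^2$; call this operator $\mathcal L$. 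Expanding $(y'\cdot\nabla')^2$ shows its second-order part is $-(\delta_{ij}+y_iy_j)\partial_{ij}$, uniformly elliptic with symbol $|\xi|^2+(y'\cdot\xi)^2$, and crucially \emph{every} coefficient of $\mathcal L+\Delta'$ except the constant $-6$ vanishes at $y'=0$, the second-order ones to order $|y'|^2$ and the first-order one to order $|y'|$. This is the decisive point, and it is where the \textbf{main obstacle} is resolved: a naive slicing of the original equation produces the source $\partial_{33}q$, which the hypothesis on $q$ only controls by $o(|x'|^{\delta-4})$ — too singular to be even locally integrable in the plane — whereas homogeneity replaces it by $(y'\cdot\nabla')^2\tilde q$, whose $|y'|^2$ prefactor exactly compensates the loss.

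Next I would record interior derivative bounds by a rescaling argument in the spirit of the proof of Lemma \ref{lem:2_1}: dilating $\tilde q$ about a point at distance $\sim|y'|$ from the origin to a unit-scale problem and invoking interior Schauder/$L^s$ estimates for $\mathcal L$, using the bounds on $F,\nabla F,\nabla^2 F$ in \eqref{eq:lem:2_2:1}, one obtains $|(\nabla')^k\tilde q|=o(|y'|^{\delta-2-k})$ for $k=1,2$ and $|\tilde G|=o(|y'|^{\delta-3})$. Setting $\mathcal R:=\mathcal L+\Delta'$ and moving it to the right, the effective source $g:=\tilde G-\mathcal R\tilde q$ of $-\Delta'\tilde q=g$ obeys $|g|=o(|y'|^{\delta-3})$, because $(y'\cdot\nabla')^2\tilde q=O(|y'|^2)\,o(|y'|^{\delta-4})=o(|y'|^{\delta-2})$, $(y'\cdot\nabla')\tilde q=o(|y'|^{\delta-2})$ and $\tilde q=o(|y'|^{\delta-2})$ are all strictly subdominant to $\tilde G$. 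Thus $\tilde q$ solves a genuine two-dimensional Poisson equation with source of order $o(|y'|^{\delta-3})$ and growth $o(|y'|^{\delta-2})$; note this is a decomposition statement for the given $\tilde q$, so there is no circularity.

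The heart of the matter is then the singularity expansion for $-\Delta'\tilde q=g$ on the punctured disk. I would expand $\tilde q$ and $g$ in Fourier series in the polar angle $\phi$ and solve the radial ODEs $-(w_n''+\rho^{-1}w_n'-n^2\rho^{-2}w_n)=g_n$ by variation of parameters. The homogeneous solutions are $\rho^{\pm|n|}$ (and $1,\ln\rho$ for $n=0$), so the indicial exponents are $\pm|n|$, and since $|g_n|=o(\rho^{\delta-3})$ each particular solution is $o(\rho^{\delta-1})$, strictly less singular than $\rho^{-1}$. The growth hypothesis $|\tilde q|=o(\rho^{\delta-2})$ then forces the coefficient of $\rho^{-|n|}$ to vanish for every $|n|\ge2$ (as $\rho^{-|n|}\le\rho^{-2}$ is not $o(\rho^{\delta-2})$ when $\delta<2$), leaving exactly three possibly-nonzero singular contributions: a multiple of $\ln\rho$ from $n=0$ and multiples of $\rho^{-1}e^{\pm i\phi}=(y_1\pm iy_2)/|y'|^2$ from $n=\pm1$. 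Collecting the regular and particular parts into $\tilde h$ gives $\tilde h=o(|y'|^{\delta-1})$, hence $|\nabla'\tilde h|=o(|y'|^{\delta-2})$ and $\int_{D_{R'}}|\nabla'\tilde h|^s<\infty$ precisely when $(\delta-2)s+1>-1$, i.e. $s<2/(2-\delta)$; this is exactly the claimed $\tilde h\in W^{1,s}(D_{R'})$.

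Finally I would undo the scaling. From $\tilde q=\tilde h+\tilde a_0\ln|y'|+\tilde a_1\,y_1/|y'|^2+\tilde b_1\,y_2/|y'|^2$, using $\ln|y'|=\ln|x'|-\ln x_3$ and $y_j/|y'|^2=x_3\,x_j/|x'|^2$ together with $q=x_3^{-2}\tilde q(x'/x_3)$, one recovers \eqref{eq:lem:2_2:2} with $a_0(x_3)=\tilde a_0 x_3^{-2}$, $a_1(x_3)=\tilde a_1 x_3^{-1}$, $b_1(x_3)=\tilde b_1 x_3^{-1}$ (continuous, indeed smooth, on $(a,b)$; the $x'$-independent remainder $-\tilde a_0 x_3^{-2}\ln x_3$ is absorbed into $h$), and $h(x):=x_3^{-2}\tilde h(x'/x_3)$, which lies in $L^\infty_{loc}((a,b),W^{1,s}(D_R))$ since dilation acts boundedly on $W^{1,s}$ and $x_3$ ranges over compact subintervals of $(a,b)$. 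Besides the singular-source issue already flagged, the one remaining technical point is justifying the termwise Fourier analysis — convergence of the series and continuity of the extracted coefficients — which can alternatively be handled by defining $a_0,a_1,b_1$ through the circle-flux and first-moment integrals of $\tilde q$ and estimating the remainder directly in the energy space.
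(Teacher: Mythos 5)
Your argument is correct in outline and takes a genuinely different route from the paper's, although both hinge on the same decisive observation you identify: a naive slicing in $x_3$ leaves the source $\partial_3^2 q=o(|x'|^{\delta-4})$, and only the $(-2)$-homogeneity of $q$ (and $(-3)$-homogeneity of $F$) rescues this by trading $x_3$-derivatives for tangential ones carrying compensating powers of $|x'|$. The paper uses homogeneity only locally for exactly this purpose: it fixes $x_3$, writes $\Delta' q=-\dive F-\partial_3^2 q$, bounds $\partial_3^2 q$ by $o(|x'|^{\delta-2})$ via the identity $q(x)=|x_3|^{-2}q(-x/x_3)$ together with rescaled interior estimates, solves two auxiliary Poisson problems on $D_R$ to peel off particular solutions, and then analyzes the genuinely harmonic remainder $q_2$ through the Laurent series of its analytic completion $w=\hat q_2+i\int\nabla^\perp\hat q_2$, killing the modes $z^{m}$, $m\le -2$, by an $L^s$ bound on $w$ (which requires constructing the harmonic conjugate along explicit paths and estimating it). You instead push homogeneity all the way, collapsing the line singularity to a point singularity of a single two-dimensional equation $\mathcal L\tilde q=\tilde G$; your computation of $\mathcal L$ is correct, the lower-order coefficients do vanish at $y'=0$ to the orders you claim, and moving $\mathcal R\tilde q$ to the right-hand side yields a Poisson equation with source $o(|y'|^{\delta-3})$ exactly as in the paper's sliced version. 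Your route buys two things: the coefficients $a_0,a_1,b_1$ come out as explicit powers of $x_3$ times constants, so their continuity (indeed smoothness) in $x_3$ is free rather than argued separately, and no harmonic conjugate is needed. The price is that the function being expanded is not harmonic, so the indicial analysis must be done mode by mode with variation of parameters, and the termwise summation you flag at the end is a genuine (if standard) obligation --- the uniform bound $|g_n|\le\sup|g|$ together with the $1/|n|$ factors in the variation-of-parameters kernels makes the series converge, or one can follow your alternative of defining $a_0,a_1,b_1$ by flux and first-moment integrals, which is essentially how the paper extracts $c_{-1}$ and $a_0$. Two harmless slips: the assertion $\tilde h=o(|y'|^{\delta-1})$ fails for $\delta>1$ because of the constant term (irrelevant, since only $\nabla'\tilde h=o(|y'|^{\delta-2})$ and the resulting $W^{1,s}$ membership are used), and one should take the reduced disk of radius $R/a$ (for $0<a<b$) so that after undoing the dilation $h(\cdot,x_3)$ covers all of $D_R$ for every $x_3\in(a,b)$.
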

\begin{proof}
	We prove the lemma when $a<b<0$. The proof when $0<a<b$ is similar. 
	
	Let $a', b'$ be arbitrary numbers satisfying $a<a'<b'<b$. For any fixed $\bar{x}\in \Omega_{a', b', R/2} \setminus\{x'=0\}$, let $\bar{\rho}:=\min\{|\bar{x}'|/3, (R-|\bar{x}'|)/3, (b-b')/3, (a'-a)/3\}$, so $B_{\bar{\rho}}(\bar{x})\subset \Omega_{a, b, R}$. Set $\bar{q}(y)=\bar{\rho}^{2-\delta}q(\bar{x}+\bar{\rho}y)$ and $\bar{F}(y)=\bar{\rho}^{3-\delta}F(\bar{x}+\bar{\rho}y)$ for $y\in B_2$. By (\ref{eq:lem:2_2:1}), we have $\sup_{B_2}(|\bar{q}|+\sum_{j=0}^{2}|\nabla^j\bar{F}|)=o(1)$ as $\bar{\rho}\to 0$. Moreover, 
	\[
		-\Delta \bar{q}(y)=\dive \bar{F},
		\quad y\in B_2. 
	\] 
	By elliptic theories, $\sup_{B_1}(|\nabla \bar{q}|+|\nabla^2 \bar{q}|)=o(1)$ as $\bar{\rho}\to 0$. Therefore, 
	\begin{equation}\label{eq:lem:2_2:pf:1}
		\sum_{j=1}^{2}|\nabla^j q(x)| |x'|^{j+2-\delta}=o(1), 
		\textrm{ as }x'\to 0, \textrm{ uniformly for }x_3\in (a', b'). 
	\end{equation}
	Denote $\Delta' = \partial_1^2 + \partial_2^2$. For each fixed $x_3\in (a, b)$, we have
	\[
		\Delta' q(x',x_3)=-\dive F(x', x_3)-\partial_3^2q(x',x_3)=:g_0(x', x_3)+g_1(x', x_3), 
	\]
	where $g_0(x', x_3):=-\partial_1F_1(x', x_3)-\partial_2F_2(x', x_3)$ 
	and $g_1(x', x_3):=-\partial_3F_3(x', x_3)-\partial_3^2q(x',x_3)$. 
	
	We first study the existence and regularity of the solutions $q_0$ and $q_1$ of the Poisson equation 
	\begin{equation}\label{eq:Poisson}
	\left\{
	\begin{aligned}
		& \Delta' q_i(x', x_3)=g_i(x', x_3), \quad x'\in D_R, \\
		& q_i(\cdot, x_3)|_{\partial D_R}=0,
	\end{aligned}
	\right.
	\end{equation}
	for $i=0,1$, and then estimate the remaining part $q_2:=q-q_0-q_1$. 
	
	(1) Since $F=o(|x'|^{\delta-2})$, we have 
	 \[
	 \sup_{a<x_3<b}\|g_0(\cdot, x_3)\|_{W^{-1, s}(D_R)}=\sup_{a<x_3<b} \| F(\cdot, x_3) \|_{L^{s}(D_{R})}<\infty, \quad \forall 1< s<\frac{2}{2-\delta}. 
	 \]
	So for each $x_3\in (a, b)$, there exists a solution $q_0(\cdot, x_3)\in W^{1, s}(D_R)$ of (\ref{eq:Poisson}) for $i=0$. Moreover, we have $\sup_{a<x_3<b} \|q_0(\cdot, x_3) \|_{W^{1, s}(D_{R})}<\infty$.

	Since $F$ is $(-3)$-homogeneous, $F_3(x)=|x_3|^{-3}F_3(-x_1/x_3, -x_2/x_3, -1)$. Without loss of generality, assume $a'<-1<b'$. 
	Then by (\ref{eq:lem:2_2:1}), we have
	\[
		|\partial_3F_3(x', x_3)|
		\le C(|F_3(-\frac{x}{x_3})|+ |x'| |\nabla F_3(-\frac{x}{x_3})|)=o\left(\frac{1}{|x'|^{2-\delta}}\right) 
	\]
	as $x'\to 0$, uniformly for $x_3\in (a, b)$, 
	where $C$ is some constant depending only on $a, b, R$. 
Moreover, since $q$ is $(-2)$-homogeneous, we have $q(x)=|x_3|^{-2}q(-x_1/x_3, -x_2/x_3, -1)$ in $\Omega_{a, b, R}\setminus\{x'=0\}$. 
	Then by (\ref{eq:lem:2_2:1}) and (\ref{eq:lem:2_2:pf:1}), we have
	\[
		|\partial_3^2q(x', x_3)|
		\le C(|q(-\frac{x}{x_3})|+|x'| |\nabla q(-\frac{x}{x_3})|+|x'|^2|\nabla^2 q(-\frac{x}{x_3})|)=o\left(\frac{1}{|x'|^{2-\delta}}\right)
			\]
	as $x'\to 0$, uniformly for $x_3\in (a', b')$, where $C$ is some constant depending only on $a, b, a', b', R$. 
	Thus we have $g_1(x', x_3)=o\left(|x'|^{\delta-2}\right)$ uniformly for $x_3\in (a', b')$ and 	 $\sup_{a'<x_3<b'}\|g_1(\cdot, x_3)\|_{L^s(D_R)}<\infty$ for any $1< s<\frac{2}{2-\delta}$. 
	In particular, since $a', b'$ are arbitrary numbers in $(a,b)$, we have $\|g_1(\cdot, x_3)\|_{L^s(D_R)}<\infty$ for any $x_3\in (a, b)$. Thus for each $x_3\in (a, b)$, there exists a solution $q_1(\cdot,x_3)\in W^{2,s}(D_R)$ of (\ref{eq:Poisson}) for $i=1$. Moreover, we have $\sup_{a'<x_3<b'} \| q_1(\cdot, x_3) \|_{W^{2,s}(D_{R})} < \infty$ for any $a<a'<b'<b$ and $1< s<\frac{2}{2-\delta}$, since $\sup_{a'<x_3<b'}\|g_1(\cdot, x_3)\|_{L^s(D_R)}<\infty$. So $q_0, q_1$ are well-defined in $\Omega_{a, b, R}$ and 
	\begin{equation}\label{eq:lem:2_2:pf:2}
		q_0, q_1 \in L^{\infty}_{loc}((a, b), W^{1, s}(D_R)), \quad \forall 1<s<\frac{2}{2-\delta}.
	\end{equation} 
	Since $g_0, g_1\in C^{\infty}(\bar{\Omega}_{a, b, R}\setminus\{x'=0\})$, we also have 
	$q_0, q_1\in C^{\infty}(\bar{\Omega}_{a, b, R}\setminus\{x'=0\})$. 	

	(2) Let $q_2(x',x_3):=q(x',x_3)-(q_0(x', x_3)+q_1(x', x_3))$, then $\Delta' q_2(x', x_3)=0$ in $\Omega_{a, b, R}\setminus\{x'=0\}$, and for any $a<a'<b'<b$, 
	\begin{equation}\label{eq:lem:2_2:pf:3}
		\sup\limits_{\substack{R/4<|x'|<R \\ a'<x_3<b'}}(|q_2|+|\nabla q_2|)<\infty. 
	\end{equation}
	We will show 
	\begin{equation}\label{eq:lem:2_2:pf:4}
		q_2(x)=\hat{h}(x)+a_0(x_3)\ln |x'|+a_1(x_3)\frac{x_1}{|x'|^2}+b_1(x_3)\frac{x_2}{|x'|^2}, \quad x\in \Omega_{a, b, R}\setminus\{x'=0\},
	\end{equation}
	for some $\hat{h}(x) \in L^{\infty}_{loc}((a, b), W^{1, s}(D_R))$ for any $1< s<\frac{2}{2-\delta}$ and $a_0(x_3), a_1(x_3), b_1(x_3)\in C(a, b)$. Then (\ref{eq:lem:2_2:2}) follows from (\ref{eq:lem:2_2:pf:2}) and (\ref{eq:lem:2_2:pf:4}) by setting $h=\hat{h}+q_0+q_1$. 
	
	For any $f(x')\in C^{\infty}(D_R\setminus\{0\})$, denote 
	\begin{equation}\label{eq:lem:2_2:pf:5}
		\hat{f}(x'):=f(x')-\left(\frac{1}{2\pi}\int_{\partial D_{R/2}}\frac{\partial f}{\partial \nu}\right)\ln |x'|.
	\end{equation}
	For each $x_3\in (a, b)$, let $\hat{q}_2(x', x_3)$ be defined as above. Since $\Delta' q_2=0$ in $D_R\setminus\{0\}$, we have $\Delta' \hat{q}_2=0$ in $D_R\setminus\{0\}$ and $\int_{\partial D_{R'}}\frac{\partial \hat{q}_2}{\partial \nu}=0$ for any $0<R'<R$ and $a<x_3<b$. Let $z=x_1+ix_2$, and
	\[
		w(z, x_3): 
		=\hat{q}_2+i \int_{(\frac{R}{2}, 0)}^z(-\partial_2\hat{q}_2 dx_1+\partial_1\hat{q}_2dx_2), 
	\]
	where the integral $\int_{(\frac{R}{2}, 0)}^{z}$ is independent of the path in $D_R\setminus\{0\}$. 
	Then $w$ is analytic in $z$ in $\{0<|z|< R\}$. 
	The Laurent series of $w$ in $z$ takes the form $w(z, x_3)=\sum_{m=-\infty}^{\infty} c_m(x_3) z^m$. 
	For any $0<R_1<R_2<R$, the series is uniformly convergent in $\{ R_1\le |z|\le R_2\}$. For 
	any $\rho>0$, we have 
	\begin{equation}\label{eq:lem:2_2:pf:6}
		c_m(x_3)=\frac{1}{2\pi i}\oint_{|z|=\rho} \frac{w(z, x_3)}{z^{m+1}}. 
	\end{equation}
	Now we show that $c_m(x_3)\equiv 0$ in $(a', b')$ for all $m\le -2$, 
	and $\sup_{a'<x_3<b'}|c_{-1}(x_3)|<\infty$. 
	
	\noindent\emph{Claim}: 
	$\sup_{a'<x_3<b'} \|w(\cdot, x_3)\|_{L^{s}(D_{R})}<\infty$ for any $1<s<\frac{2}{2-\delta}$.
	
	We will prove the claim later. Suppose the claim holds. Note that (\ref{eq:lem:2_2:pf:6}) holds for any $\rho>0$, we have
	\[
		c_{m}(x_3)=\frac{1}{2\pi i}\oint_{|z|=\rho} \frac{w(z, x_3)}{z^{m+1}}=\frac{1}{\pi i \rho}\int_{\rho/2}^{\rho}\oint_{|z|=t} \frac{w(z, x_3)}{z^{m+1}}dS(z)dt=\frac{1}{\pi i \rho}\int_{D_{\rho}\setminus D_{\rho/2}}\frac{w(z, x_3)}{z^{m+1}}.
	\]
	Now fix some $1<s<\frac{2}{2-\delta}$. By the Claim and H\"{o}lder's inequality, we have
	\[
		\sup_{a'<x_3<b'}|c_{m}(x_3)|\le \frac{1}{\pi\rho}\sup_{a'<x_3<b'}\|w\|_{L^s(D_{\rho}\setminus D_{\rho/2})}\|z^{-m-1}\|_{L^{\frac{s}{s-1}}(D_{\rho}\setminus D_{\rho/2})} 
		\le C\rho^{-m-\frac{2}{s}}
	\]
	for all $\rho>0$ and some $C$ independent of $\rho$. 
	Since $s>1$, we have $-m-2/s>-m-2$. By sending $\rho\to 0$, we have $\sup_{a'<x_3<b'}|c_m(x_3)|=0$ for $m\le -2$. As $a<a', b'<b$ are arbitrary, we have $c_m(x_3)\equiv 0$ for $x_3\in (a, b)$ and $m\le -2$. Then $w(z, x_3)=\sum_{m=-1}^{\infty}c_m(x_3)z^m$. By the definition of $w$ and (\ref{eq:lem:2_2:pf:3}), we have $\sup_{\substack{R/4<|x'|<R \\ a'<x_3<b'}}|w|<\infty$. Taking $\rho=R/2$ in (\ref{eq:lem:2_2:pf:6}), we have $\sup_{a'<x_3<b'}|c_m(x_3)|\le C(R/2)^{-m}$ for $m\ge -1$ for some $C$ depending only on $\sup_{\substack{R/4<|x'|<R \\ a'<x_3<b'}}|w|$. So $\sup_{a'<x_3<b'}|c_{-1}(x_3)|<\infty$. Moreover, for $|z|\le R/4$, 
	\begin{equation}\label{eq:lem:2_2:pf:7}
		| \sum_{m=0}^{\infty} c_m(x_3) z^m | \le C \sum_{m=0}^{\infty} \frac{1}{2^m}<\infty. 
	\end{equation}
	Set $\hat{h}(x', x_3)=\Real (\sum_{m=0}^{\infty}c_m(x_3)z^m)$, $a_0(x_3)=\frac{1}{2\pi}\int_{\partial D_{R/2}}\frac{\partial q_2}{\partial \nu}$, $a_1(x_3)=\Real c_{-1}(x_3)$ and $b_1(x_3)=\Imagine c_{-1}(x_3)$. By the fact that 
	\[
		\Real w(z, x_3)=\hat{q}_2=q_2(x', x_3)-(\frac{1}{2\pi}\int_{\partial D_{R/2}}\frac{\partial q_2}{\partial \nu})\ln |x'|, 
	\]
	we have for any $x_3\in(a,b)$ that
	\[
	\begin{split}
		q_2(x', x_3) & =\Real w(z, x_3)+(\frac{1}{2\pi}\int_{\partial D_{R/2}}\frac{\partial q_2}{\partial \nu})\ln |x'|\\
		& =\Real (\sum_{m=-1}^{\infty}c_m(x_3)z^m)+(\frac{1}{2\pi}\int_{\partial D_{R/2}}\frac{\partial q_2}{\partial \nu})\ln |x'|\\
		& =\hat{h}(x', x_3)+
		a_0(x_3)\ln |x'|+a_1(x_3)\frac{x_1}{|x'|^2}+b_1(x_3)\frac{x_2}{|x'|^2}. 
		\end{split}
	\]
	So (\ref{eq:lem:2_2:pf:4}) is proved. 
	
	Let $h:=\hat{h}+q_0+q_1$, then
	\[
		q=(q_0+q_1)+q_2=h(x', x_3)+a_0(x_3)\ln |x'|+a_1(x_3)\frac{x_1}{|x'|^2}+b_1(x_3)\frac{x_2}{|x'|^2}. 
	\]
	By (\ref{eq:lem:2_2:pf:2}) and (\ref{eq:lem:2_2:pf:7}), we have $h\in L^{\infty}_{loc}((a, b), W^{1, s}(D_R))$. By (\ref{eq:lem:2_2:pf:3}), we have $a_0\in L^{\infty}_{loc}(a, b)$. Since $\sup_{a'<x_3<b'}|c_{-1}(x_3)|\le C$, we have $a_1, b_1\in L^{\infty}_{loc}(a, b)$. Since $q\in C^{\infty}(\bar{\Omega}_{a, b, R}\setminus\{x'=0\})$, we have $q_1, q_2, \hat{q}_2\in C^{\infty} (\Omega_{a, b, R}\setminus\{x'=0\})$. Thus for each $z\ne 0$, $w(z, x_3)$ is continuous in $x_3\in (a, b)$ and $c_m$ is continuous in $x_3\in (a, b)$, and therefore $a_0(x_3), a_1(x_3)$, $b_1(x_3)\in C(a, b)$. 

	(3)	\noindent\emph{Proof of Claim}: 
	Recall that $q_2=q-(q_0+q_1)$, we have $\Real w=\hat{q}_2=\hat{q}-(\hat{q}_1+\hat{q}_0)$, where $\hat{q}(\cdot, x_3), \hat{q}_0(\cdot, x_3), \hat{q}_1(\cdot, x_3)$ are defined by (\ref{eq:lem:2_2:pf:5}) for each $x_3\in (a', b')$. By (\ref{eq:lem:2_2:1}) and (\ref{eq:lem:2_2:pf:2}), we have 
	\[
		\sup_{a'<x_3<b'}\|\hat{q}_2\|_{L^s(D_R)}
		<\infty, \quad \forall 1<s<\frac{2}{2-\delta}. 
	\]
	It remains to show 
	\begin{equation}\label{eq:lem:2_2:pf:8}
		\sup_{a'<x_3<b'} \|\Imagine w\|_{L^{s}(D_{R})}<\infty, \quad \forall 1<s<\frac{2}{2-\delta}.
	\end{equation} 
	Note $\Imagine w=\int_{(\frac{R}{2}, 0)}^z(-\partial_2\hat{q}_2 dx_1+\partial_1\hat{q}_2dx_2)$. For any $z\in D_R\setminus \{z_2=0\}$, denote $\bar{z}=\frac{R}{2}\frac{z}{|z|}$. Let $\Gamma_1$ be the counter-clockwise path from $(R/2, 0)$ to $\bar{z}$ along $\partial D_{R/2}$, and $\Gamma_2$ be the path from $\bar{z}$ to $z$ along the ray in the direction of $z$, and let $\Gamma=\Gamma_1\cup\Gamma_2$. For any $f=(f_1, f_2)\in C(D_R\setminus\{0\})$, define
	\begin{equation*}
		\mathcal{L}[f](z) := \int_{\Gamma}(f_1dx_1+f_2dx_2), \quad z\in D_R\setminus [0, R). 
	\end{equation*}
	Then the following facts hold.

	\noindent\emph{Fact 1}. If $f=(f_1, f_2)\in C(D_R\setminus\{0\})$ satisfies $|f_1(z)|+|f_2(z)|\le C_0|z|^{\lambda}$ for some $C_0>0$ and $\lambda\in\mathbb{R}\setminus\{-1\}$, then $|\mathcal{L}[f](z)|\le C(1+|z|^{\lambda+1})$ for some $C$ depending only on $R, \lambda$ and $C_0$. 
	
	To see this, let $(r, \theta)$ be the polar coordinates in $\mathbb{R}^2$, where $x_1=r\cos\theta$ and $x_2=r\sin\theta$. For any $z\in D_R\setminus \{z_2=0\}$, denote $z=(|z|, \theta_0)$, we have 
	\[
	\begin{split}
		|\mathcal{L}[f](z)| & =|\int_{\Gamma}(f_1\sin\theta+f_2\cos\theta)rd\theta+(f_1\cos\theta+f_2\sin\theta)dr|\\
		& \le \frac{R}{2}\int_{0}^{\theta_0}|f\big(\frac{R}{2}, \theta\big)|d\theta 
		+\int_{|z|}^{\frac{R}{2}}|f(r, \theta_0)|dr\\
		& 
		\le C+C|\int_{|z|}^{\frac{R}{2}}r^{\lambda}dr| 
		\le C+C|z|^{\lambda+1}.
	\end{split}
	\]
	
	\noindent\emph{Fact 2.} If $f=(f_1, f_2)\in C(D_R\setminus\{0\})\cap L^s(D_R)$ for some $s\ge 1$, then $\mathcal{L}[f]\in L^s(D_R)$, and 
	\[
		\|\mathcal{L}[f]\|_{L^s(D_R)}\le C(1+\|f\|_{L^{s}(D_R)}) 
	\]
	for some $C$ depending only on $s$ and $R$. 
	
	As in the proof of Fact 1, we have 
	\[
	\begin{split}
		|\mathcal{L}[f](z)| & \le \frac{R}{2}\int_{0}^{\theta_0}|f\big(\frac{R}{2}, \theta\big)|d\theta 
		+\int_{|z|}^{\frac{R}{2}}|f(r, \theta_0)|dr
		\le C+C\big|\int_{|z|}^{\frac{R}{2}}|f(r, \theta_0)|^sdr\big|^{\frac{1}{s}}. 
	\end{split}
	\]
	Taking the power $s$ of the above and integrating in $z$ over $D_R$, we have 
	\[
		\|\mathcal{L}[f](z)\|_{L^s(D_R)}\le C(1+\|f\|_{L^{s}(D_R)}). 
	\]
	So Fact 2 holds. 
	
	Since in the definition of $w$, the integral $\int_{(\frac{R}{2}, 0)}^{z}$ is independent of path, we take the path to be $\Gamma$ as defined above. Then 
	\[
		\Imagine w=\int_{\Gamma}(-\partial_2\hat{q}_2 dx_1+\partial_1\hat{q}_2dx_2)=\mathcal{L}[\nabla^{\perp}\hat{q}_2]=\mathcal{L}[\nabla^{\perp}\hat{q}]-\mathcal{L}[\nabla^{\perp}(\hat{q}_0+\hat{q}_1)].
	\]
	where $\nabla^{\perp}=(-\partial_2, \partial_1)$. By (\ref{eq:lem:2_2:pf:1}) and Fact 1, we have $\sup_{a'<x_3<b'}|z|^{2-\delta}|\mathcal{L}[\nabla^{\perp}\hat{q}(\cdot, x_3)]|<\infty$, and therefore $\sup_{a'<x_3<b'} \|\mathcal{L}[\nabla^{\perp}\hat{q}(\cdot, x_3)]\|_{L^{s}(D_{R})}<\infty$ for $1<s<\frac{2}{2-\delta}$. By (\ref{eq:lem:2_2:pf:2}) and Fact 2, we have $\sup_{a'<x_3<b'} \|\mathcal{L}[\nabla^{\perp}\hat{q}_0(\cdot, x_3)+\nabla^{\perp}\hat{q}_1(\cdot, x_3)]\|_{L^{s}(D_{R})}<\infty$ for $1<s<\frac{2}{2-\delta}$. Thus (\ref{eq:lem:2_2:pf:8}) holds and the Claim is proved. The lemma is proved. 
\end{proof}

\begin{lem}\label{lem:2_3}
	Let $R>0$ and $a<b$ satisfying $ab>0$. Suppose $(u,p)\in C^{\infty}(\Omega_{a, b, 3R}\setminus\{x'=0\})$ is a $(-1)$-homogeneous solution of the Navier-Stokes equations 
	\begin{equation}\label{eq:lem:2_3:1}
	\left\{
		\begin{array}{ll}
			 -\Delta u+u\cdot \nabla u + \nabla p=0, & \textrm{ in }\Omega_{a, b, 3R}\setminus\{x'=0\}\\
			\dive u=0, & \textrm{ in }\Omega_{a, b, 3R}\setminus\{x'=0\}
		\end{array}
	\right.
	\end{equation}
	satisfying 
	\begin{equation}\label{eq:lem:2_3:2}
		|u|=o(\ln |x'|), \quad \textrm{as }x'\to 0\textrm{ uniformly in } \Omega_{a, b, 3R}\setminus\{x'=0\}. 
	\end{equation}
	Then $p\in L^{\infty}_{loc}((a, b), W^{1, s}(D_R))$ for any $1< s<2$ and $a<a'<b'<b$, 	
	\begin{equation}\label{eq:lem:2_3:3}
		\| \nabla u \|_{L^2(\Omega_{a', b', R}\setminus\Omega_{a', b', \epsilon})}=o(\sqrt{| \ln \epsilon|}), \textrm{ as }\epsilon\to 0^+. 
	\end{equation}
\end{lem}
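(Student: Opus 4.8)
The plan is to reduce to the scalar pressure equation, invoke Lemma~\ref{lem:2_2}, and then isolate and kill the one obstruction that prevents $p$ from lying in $W^{1,s}$ for $s\ge 1$. First I would take the divergence of the momentum equation in \eqref{eq:lem:2_3:1}; since $\dive u=0$, the convective term is $\dive(u\cdot\nabla u)=\dive F$ with $F:=u\cdot\nabla u$, so that $-\Delta p=\dive F$ away from the axis, where $F$ is $(-3)$-homogeneous and smooth and $p$ is $(-2)$-homogeneous. Applying Lemma~\ref{lem:2_1} with $\lambda=1$ gives $|\nabla^k u|=o(|\ln|x'||/|x'|^k)$ and $|\nabla^{k-1}p|=o(|\ln|x'||/|x'|^k)$, whence $|F|,\,|x'||\nabla F|,\,|x'|^2|\nabla^2F|=o((\ln|x'|)^2/|x'|)$ and $|p|=o(|\ln|x'||/|x'|)$. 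Thus the hypothesis \eqref{eq:lem:2_2:1} holds for every $\delta\in(0,1)$, and Lemma~\ref{lem:2_2} yields
$$p=h+a_0(x_3)\ln|x'|+a_1(x_3)\frac{x_1}{|x'|^2}+b_1(x_3)\frac{x_2}{|x'|^2},$$
with $h\in L^\infty_{loc}((a,b),W^{1,s}(D_R))$ for every $s<\tfrac{2}{2-\delta}$; letting $\delta\uparrow 1$ covers all $s<2$. Since $\ln|x'|\in W^{1,s}(D_R)$ exactly when $s<2$, while $x_1/|x'|^2,\,x_2/|x'|^2\notin W^{1,s}(D_R)$ as soon as $s\ge1$, the claim $p\in L^\infty_{loc}((a,b),W^{1,s}(D_R))$ is \emph{equivalent} to $a_1\equiv b_1\equiv0$.

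The vanishing of the dipole is the heart of the matter, and I expect it — together with \eqref{eq:lem:2_3:3} — to hinge on a single log-critical fact: the \emph{strict} bound $u=o(\ln|x'|)$ forces $u$ to carry no logarithmic mode along the axis. To exploit it I would first show $(u,p)$ is a distributional solution of \eqref{eq:lem:2_3:1} on all of $\Omega_{a,b,3R}$, axis included. The estimates above give $u\in W^{1,s}$, $p\in L^s$ and $u\otimes u\in L^s$ locally for every $s<2$; multiplying by $\eta_\epsilon\varphi$, where $\eta_\epsilon$ is a radial cutoff vanishing for $|x'|\le\epsilon$, and integrating by parts, every error term that tests a \emph{derivative} of $\varphi$ is $o(1)$ (using $|u|=o(\ln)$ and $\mathrm{vol}\{\epsilon<|x'|<2\epsilon\}\sim\epsilon^2$), so the only surviving axis contribution is a monopole $\mu(x_3)\,\delta_{\{x'=0\}}$ whose components are the logarithmic charges of $u$. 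Taking the divergence, $-\Delta p=\dive F+\dive(\mu\,\delta_{\{x'=0\}})$ in $\mathcal D'(\Omega_{a,b,3R})$. On one hand $\dive F\in W^{-1,s}$ contains no term $\partial_j\delta_{\{x'=0\}}$; on the other, inserting the decomposition and using $x_j/|x'|^2=\partial_j\ln|x'|$ and $\Delta'\ln|x'|=2\pi\delta_{\{x'=0\}}$ shows $\Delta p$ contains exactly $2\pi a_1\partial_1\delta_{\{x'=0\}}+2\pi b_1\partial_2\delta_{\{x'=0\}}$, while $\dive(\mu\,\delta_{\{x'=0\}})$ contributes $\mu_1\partial_1\delta_{\{x'=0\}}+\mu_2\partial_2\delta_{\{x'=0\}}$ (plus a monopole). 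Matching the $\partial_j\delta_{\{x'=0\}}$ coefficients gives $a_1=-\mu_1/2\pi$ and $b_1=-\mu_2/2\pi$, so it remains to prove $\mu_1=\mu_2=0$.

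This last step is where I expect the real difficulty, and it is genuinely log-critical: every naive estimate (the cutoff error above, the flux $\oint_{|x'|=\rho}\partial_r p\,\cos\theta$, the Dirichlet energy on a dyadic annulus) is of borderline size $O(\ln)$ under $u=O(\ln)$ and is rendered negligible only by the strict little-$o$. The plan here is to upgrade Lemma~\ref{lem:2_1} by a second, unnormalized blow-up — equivalently, a Lemma~\ref{lem:2_2}-type slicewise decomposition of the components $u_i$, which satisfy $-\Delta u_i=\dive(p\,e_i+u_iu)$: a genuine logarithmic mode contributes a term of exact order $\ln|x'|$, so $u=o(\ln|x'|)$ forces its coefficient, hence $\mu$, to vanish, and simultaneously improves the gradient bound to $|\nabla u|=o(1/|x'|)$. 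The obstacle is the coupling: the source $p\,e_i+u_iu$ inherits precisely the pressure dipole one is trying to exclude, so this must be run as a coupled bootstrap for $(u,p)$ across the log-critical threshold rather than for $u$ alone; this coupling (consistent with the optimality Remark and Lemma~\ref{lem:3_1}) is the main point.

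Granting $|\nabla u|=o(1/|x'|)$, \eqref{eq:lem:2_3:3} follows quickly. I would use the energy identity from multiplying \eqref{eq:lem:2_3:1} by $u$ over $\Omega_\epsilon:=\Omega_{a',b',R}\setminus\Omega_{a',b',\epsilon}$,
$$\int_{\Omega_\epsilon}|\nabla u|^2=\int_{\partial\Omega_\epsilon}\Big[(\partial_\nu u)\cdot u-\tfrac12|u|^2\,u\cdot\nu-p\,u\cdot\nu\Big].$$
The outer cylinder and the caps $x_3=a',b'$ contribute $O(1)$; on $|x'|=\epsilon$ the convective term is $o(\epsilon(\ln\epsilon)^3)\to0$, and the pressure term is controlled by writing $p=h+a_0\ln|x'|$ and using $\dive u=0$: the flux $\oint_{|x'|=\epsilon}u\cdot\nu=-\int_{|x'|<\epsilon}\partial_3u_3=o(\epsilon\ln\epsilon)$ kills the $a_0\ln\epsilon$ part, while the $h$-part is handled by a $W^{1,s}$ trace estimate along a sequence of good radii. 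The surviving viscous flux is $o(|\ln\epsilon|)$ once $|\nabla u|=o(1/|x'|)$ is known; alternatively one integrates $|\nabla u|^2=o(1/|x'|^2)$ directly, since $\int_\epsilon^R\omega(r)^2\,r^{-1}\,dr=o(|\ln\epsilon|)$ whenever $\omega(r)\to0$. Either way $\int_{\Omega_\epsilon}|\nabla u|^2=o(|\ln\epsilon|)$, which is exactly \eqref{eq:lem:2_3:3}.
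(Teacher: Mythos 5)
Your reduction to Lemma \ref{lem:2_2} and your diagnosis that everything hinges on showing $a_1\equiv b_1\equiv 0$ are both correct, but the two steps you then need are exactly the ones left unproved. First, your route to \eqref{eq:lem:2_3:3} is conditional on the pointwise bound $|\nabla u|=o(1/|x'|)$, which you do not establish and which does not follow from Lemma \ref{lem:2_1}: that lemma only gives $|\nabla u|=o(|\ln|x'||/|x'|)$, whose square integrates over $\Omega_{a',b',R}\setminus\Omega_{a',b',\epsilon}$ to $o(|\ln\epsilon|^3)$, not $o(|\ln\epsilon|)$. The paper obtains \eqref{eq:lem:2_3:3} unconditionally, and \emph{before} knowing anything about $a_1,b_1$: it tests the momentum equation against $g_\epsilon^2 u$, where $g_\epsilon$ is the logarithmic cutoff equal to $-\ln(|x'|/\epsilon^2)/\ln\epsilon$ on $\epsilon^2<|x'|<\epsilon$, so that $\|\nabla' g_\epsilon\|_{L^2}=O(1/\sqrt{|\ln\epsilon|})$; the dipole part of $p$, which is only $O(1/|x'|)$, is then absorbed because $\bigl\| |\ln|x'||/|x'|\bigr\|_{L^2}\,\|\nabla' g_\epsilon\|_{L^2}=O(|\ln\epsilon|)$ and the $o(1)$ prefactor coming from $u=o(\ln|x'|)$ turns this into $o(|\ln\epsilon|)$.

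Second, and more seriously, the vanishing of the dipole --- which you correctly identify as the heart of the matter --- is not actually proved: your plan is a ``coupled bootstrap for $(u,p)$ across the log-critical threshold,'' which you yourself flag as the main obstacle, and the distributional-matching step preceding it already presupposes that the axis contribution of $u$ is a pure monopole, an unverified claim of essentially the same difficulty. The paper's mechanism is concrete and runs in the reverse logical order, using \eqref{eq:lem:2_3:3} as an \emph{input}: test the first component of the momentum equation against $g_\epsilon\,x_1x_2/|x'|^2$. Oddness in $x_2$ annihilates the $a_0$ and $a_1$ contributions; the terms involving $\Delta u_1$, $u\cdot\nabla u_1$ and $\partial_1 h$ are all $o(|\ln\epsilon|)$ or $O(\sqrt{|\ln\epsilon|})$ by \eqref{eq:lem:2_3:3}, the cutoff bounds and $h\in L^2$; but the $b_1$ term produces $2\int b_1 g_\epsilon\, x_1^2x_2^2/|x'|^6\ge C^{-1}b_1(\bar x_3)|\ln\epsilon|$, forcing $b_1\equiv0$, and symmetrically $a_1\equiv0$. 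Without some such quantitative argument, your proof of the $W^{1,s}$ membership of $p$ is incomplete.
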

\begin{proof}
	Let $a', b', a'', b''$ be some arbitrary numbers such that $a<a''<a'<b'<b''<b$. For convenience, denote $\Omega_r=\Omega_{a', b', r}$ and $\tilde{\Omega}_r=\Omega_{a, b, r}$ for any $r>0$. Let $C$ denote a positive constant which may vary from line to line, depending only on $a, b, a', b', a'', b'', R$. For any $\epsilon>0$, let
	\[
		g_1(x') =
		\left\{
		\begin{array}{ll}
			0, & 0<|x'| \le \epsilon^2,\\
			\displaystyle -\frac{1}{\ln \epsilon}\ln \frac{|x'|}{\epsilon^2}, & \epsilon^2<|x'|<\epsilon,\\
			1, & \epsilon\le |x'|<R/4,\\
			\textrm{smooth between $0$ and $1$ and positive }, & R/4<|x'|<R/2,\\ 
			0,& |x'|>R/2. 
		\end{array}
		\right.
	\]
	Let $g_2(x_3)\ge 0$ be a cutoff function in $C^{\infty}_c(a, b)$ such that $g_2(x_3)=1$ for $x_3\in (a', b')$, and $g_2(x_3)=0$ for $x_3\le a''$ or $x_3\ge b''$. Let $g_{\epsilon}(x)=g_1(|x'|)g_2(x_3)$, then $g_{\epsilon}$ is compactly supported in $\tilde{\Omega}_{2R}$, satisfying 
	\begin{equation}\label{eq:lem:2_3:pf:1}
		\| \nabla g_{\epsilon} \|_{ L^{\infty} (\tilde{\Omega}_{2R} \setminus \tilde{\Omega}_{R/4} ) } + \| \partial_3 g_{\epsilon} \|_{ L^{\infty} (\tilde{\Omega}_{2R}) } \le C, \textrm{ and } \| \nabla' g_{\epsilon} \|_{ L^2 ( \tilde{\Omega}_{R/4} ) }
				\le \frac{C}{\sqrt{| \ln\epsilon|}},
	\end{equation}
	where $\nabla'=(\partial_1, \partial_2)$. 
	
	Taking divergence of the first equation in (\ref{eq:lem:2_3:1}), we have $\Delta p=-\dive (u\cdot \nabla u)$ in $\Omega_{3R}\setminus\{x'=0\}$. By (\ref{eq:lem:2_3:2}) and Lemma \ref{lem:2_1} with $\lambda=1$, we have $\sup_{a'<x_3<b'}|p|=o(| \ln |x'| |/|x'|)$ and $\sup_{a'<x_3<b'}\sum_{j=0}^{2}|\nabla^j (u\cdot \nabla u)| |x'|^{j+1}/| \ln|x'| |^2=o(1)$. Apply Lemma \ref{lem:2_2} for $q=p$, $F=u\cdot \nabla u$ and any $0<\delta<1$ there, we have 
	\begin{equation}\label{eq:lem:2_3:pf:2}
		p(x)=h(x)+a_0(x_3)\ln |x'|+a_1(x_3)\frac{x_1}{|x'|^2}+b_1(x_3)\frac{x_2}{|x'|^2}, \quad x\in \tilde{\Omega}_{3R}, 
	\end{equation}
	for some $a_0(x_3), a_1(x_3), b_1(x_3)\in C(a, b)$, and $h\in L^{\infty}_{loc}((a, b), W^{1, s}(D_{3R}))$ for any $1<s<2$. By Sobolev embedding, we have $h\in L^{\infty}_{loc}((a, b), L^{r}(D_{3R}))$ for any $1<r<\infty$. 
	
	We first prove (\ref{eq:lem:2_3:3}). Take the dot product of the first equation in (\ref{eq:lem:2_3:1}) with $g_{\epsilon}^2u$ and integrate on $\tilde{\Omega}_{2R}$, we have, using $\nabla \cdot u=0$, that 
	\[
	\begin{split}
		& 0=\int_{\tilde{\Omega}_{2R}}(-\Delta u+\nabla p+u\cdot \nabla u)\cdot (g^2_{\epsilon}u) dx\\
		&=\int_{\tilde{\Omega}_{2R}}\nabla u\cdot \nabla (g^2_{\epsilon}u)dx-2\int_{\tilde{\Omega}_{2R}}p g_{\epsilon} \nabla g_{\epsilon}\cdot udx-\int_{\tilde{\Omega}_{2R}}|u|^2g_{\epsilon}u\cdot \nabla g_{\epsilon}dx\\
		&=\int_{\tilde{\Omega}_{2R}}|\nabla (g_{\epsilon}u)|^2dx-\int_{\tilde{\Omega}_{2R}}|u|^2|\nabla g_{\epsilon}|^2 dx-2\int_{\tilde{\Omega}_{2R}}p g_{\epsilon} \nabla g_{\epsilon}\cdot udx-\int_{\tilde{\Omega}_{2R}}|u|^2g_{\epsilon}u\cdot \nabla g_{\epsilon}dx.
	\end{split}
	\]
	Note that $g_{\epsilon}=1$ in $\Omega_{R/4}\setminus\Omega_{\epsilon}$ and $g_{\epsilon}=0$ in $\tilde{\Omega}_{\epsilon^2}$. By this, the above, (\ref{eq:lem:2_3:2}), (\ref{eq:lem:2_3:pf:1}), and the fact that $p=h+O\left(1/|x'|\right)$ in $(a'', b'')\times D_{3R}$ (by (\ref{eq:lem:2_3:pf:2})) with some $h\in L^{\infty}_{loc}((a, b), L^{r}(D_{3R}))$ for any $1<r<\infty$, we have, as $\epsilon\to 0$, that
	\[
	\begin{split}
		& \int_{\Omega_{R/4}\setminus\Omega_{\epsilon}}|\nabla u|^2dx 
		\le \int_{\tilde{\Omega}_{2R}}|\nabla (g_{\epsilon}u)|^2dx\\
		& \le \int_{\tilde{\Omega}_{2R}}|u|^2|\nabla g_{\epsilon}|^2 dx+2\int_{\tilde{\Omega}_{2R}}|p g_{\epsilon} \nabla g_{\epsilon}u|dx+\int_{\tilde{\Omega}_{2R}}|u|^3|g_{\epsilon}| |\nabla g_{\epsilon}|\\
		& = O(1)+o(1)\int_{\tilde{\Omega}_{2R}\setminus\tilde{\Omega}_{\epsilon^2}}\left(| \ln |x'| |^2|\nabla g_{\epsilon}|^2 +|h| | \ln |x'|||\nabla g_{\epsilon}| \right.
		+\frac{| \ln |x'||}{|x'|} |\nabla g_{\epsilon}|+| \ln |x'| |^3|\nabla g_{\epsilon}|)\\
		& = O(1)+o(1)
		\left( \int_{ \tilde{\Omega}_{2R} \setminus \tilde{\Omega}_{\epsilon^2} } | \ln \epsilon |^2 |\nabla' g_{\epsilon} |^2 dx 
		+ \| \frac{| \ln |x'|| }{ |x'| } \|_{ L^2 ( \tilde{\Omega}_{2R} \setminus \tilde{\Omega}_{\epsilon^2} ) } \| \nabla' g_{\epsilon} \|_{ L^2 ( \tilde{\Omega}_{2R} \setminus \tilde{\Omega}_{\epsilon^2} ) } \right) \\
		& = o ( \ln \epsilon).
	\end{split}
	\]
	So (\ref{eq:lem:2_3:3}) is proved.
 
	Next, we prove $a_1(x_3)\equiv b_1(x_3)\equiv 0$ in (\ref{eq:lem:2_3:pf:2}) for $x\in \Omega_{R}$, and therefore $p=h+a_0(x_3)\ln |x'|\in L^{\infty}_{loc}((a, b), W^{1, s}(D_R))$ for any $1< s<2$. We first show that $b_1(x_3)\equiv 0$. Suppose $b_1(\bar{x}_3)\ne 0$ for some $\bar{x}_3\in (a, b)$. Without loss of generality, we assume that $b_1(\bar{x}_3)>0$. Choose $a', b', a'', b''$ such that $a<a''<a'<\bar{x}_3<b'<b''<b$ and $b_1(x_3)>b_1(\bar{x}_3)/2$ for $x_3\in (a'', b'')$. We take a cutoff function $g_2(x_3)$ as described earlier using these values of $a', b', a''$ and $b''$, and let $g_{\epsilon}(x)=g_1(|x'|)g_2(x_3)$. 
	
	By the first equation in (\ref{eq:lem:2_3:1}), we have 
	\[
		\Delta u_1-u\cdot\nabla u_1=\partial_1p=\partial_1h+a_0\frac{x_1}{|x'|^2}+a_1\frac{x_2^2-x_1^2}{|x'|^4}-2b_1\frac{x_1x_2}{|x'|^4}, 
	\]
	where $u_1$ is the first component of $u$. Multiplying the above by $g_{\epsilon}\frac{x_1x_2}{|x'|^2}$ and integrating on $\tilde{\Omega}_{2R}$, we have 
	\begin{equation}\label{eq:lem:2_3:pf:3}
		2\int_{\tilde{\Omega}_{2R}} b_1(x_3)g_{\epsilon}\frac{x_1^2x_2^2}{|x'|^6}= 
		\int_{\tilde{\Omega}_{2R}}(-\Delta u_1+u\cdot\nabla u_1+\partial_1h+a_0\frac{x_1}{|x'|^2}+a_1\frac{x_2^2-x_1^2}{|x'|^4})g_{\epsilon}\frac{x_1x_2}{|x'|^2}. 
	\end{equation}
	Since $g_{\epsilon}=g_{\epsilon}(|x'|, x_3)$, by the oddness in $x_2$ of the integrants, we have 
	\begin{equation*}
		\int_{\tilde{\Omega}_{2R}}a_0\frac{x_1}{|x'|^2}g_{\epsilon}\frac{x_1x_2}{|x'|^2}= \int_{\tilde{\Omega}_{2R}} a_1\frac{x_2^2-x_1^2}{|x'|^4}g_{\epsilon}\frac{x_1x_2}{|x'|^2}=0. 
	\end{equation*}
	By (\ref{eq:lem:2_3:3}) and (\ref{eq:lem:2_3:pf:1}), we have 
	\begin{equation*}
	\begin{split}
		& |\int_{\tilde{\Omega}_{2R}}\Delta u_1g_{\epsilon}\frac{x_1x_2}{|x'|^2}| =| \int_{\tilde{\Omega}_{2R}} \nabla u_1\nabla (g_{\epsilon}\frac{x_1x_2}{|x'|^2})|\\
		& \le C\|\nabla u_1\|_{L^2(\tilde{\Omega}_{2R}\setminus\tilde{\Omega}_{\epsilon^2})}(\|\nabla g_{\epsilon}\|_{L^2(\tilde{\Omega}_{2R}\setminus\tilde{\Omega}_{\epsilon^2})}+\|\frac{1}{|x'|}\|_{L^2(\tilde{\Omega}_{2R}\setminus\tilde{\Omega}_{\epsilon^2})})=o(\ln \epsilon).
	\end{split}
	\end{equation*}
	By (\ref{eq:lem:2_3:3}), the fact that $u=o(\ln |x'|)$ and the definition of $g_{\epsilon}$, we have 
	\begin{equation*}
		|\int_{\tilde{\Omega}_{2R}}u\cdot \nabla u_1g_{\epsilon}\frac{x_1x_2}{|x'|^2}| \le C\|\nabla u_1\|_{L^2(\tilde{\Omega}_{2R}\setminus\tilde{\Omega}_{\epsilon^2})}\|u\|_{L^2(\tilde{\Omega}_{2R}\setminus\tilde{\Omega}_{\epsilon^2})}=o(\sqrt{| \ln \epsilon|}).
	\end{equation*}
	By the definition of $g_{\epsilon}$, (\ref{eq:lem:2_3:pf:1}) and the fact that $h\in L^2_{loc}(\tilde{\Omega}_{2R})$, 
	we have 
	\begin{equation*}
	\begin{split}
		& |\int_{\tilde{\Omega}_{2R}}\partial_1hg_{\epsilon}\frac{x_1x_2}{|x'|^2}| =|\int_{\tilde{\Omega}_{2R}}h(\partial_1g_{\epsilon}\frac{x_1x_2}{|x'|^2}+g_{\epsilon}\partial_1 \frac{x_1x_2}{|x'|^2})|\\
		& \le \|h\|_{L^2(\Omega_R)}(\|\nabla' g_{\epsilon}\|_{L^2(\tilde{\Omega}_{2R})}+\|\frac{1}{|x'|}\|_{L^2(\tilde{\Omega}_{2R}\setminus\tilde{\Omega}_{\epsilon^2})})=O(\sqrt{| \ln\epsilon|}). 
		\end{split}
	\end{equation*}
	So the right hand side of (\ref{eq:lem:2_3:pf:3}) is $o(\ln \epsilon)$. On the other hand, by the definition of $g_{\epsilon}$, the integral on the left hand side of (\ref{eq:lem:2_3:pf:3}) satisfies 
	\begin{equation*}
		\int_{ \tilde{\Omega}_{2R} } b_1(x_3)g_{\epsilon} \frac{ x_1^2 x_2^2 }{ |x'|^6 } \ge \frac{b_1(\bar{x}_3)}{C} \int_{ ( \Omega_{R/4} \setminus \Omega_{\epsilon} ) \cap \{ 2 |x_1| \le |x'| \le 3 |x_1| \} } \frac{1}{ |x'|^2 } 
		\ge \frac{b_1(\bar{x}_3)}{C} | \ln \epsilon |. 
	\end{equation*}
	This contradicts to $b_1(\bar{x}_3)>0$. We have proved that $b_1(x_3)\equiv 0$. Similarly, we have $a_1(\bar{x}_3)\equiv 0$. The lemma follows from this and (\ref{eq:lem:2_3:pf:2}). 
\end{proof}

\begin{lem}\label{lem:2_4}
	Let $R>0$, $a<b$ satisfying $ab>0$, and $g\in C^{\infty}(\Omega_{a, b, R}\setminus\{x'=0\})$. Suppose $(v, q)$ is a $C^{\infty}$ solution of 
	\begin{equation}\label{eq:lem:2_4:1}
	\left\{
	\begin{array}{ll}
			-\Delta v+\nabla q=\dive g, & \textrm{ in }\Omega_{a, b, R}\setminus\{x'=0\}, \\
			\dive v=0, & \textrm{ in }\Omega_{a, b, R}\setminus\{x'=0\},
	\end{array}
	\right.
	\end{equation}
	satisfying $v\in L^2(\Omega_{a, b, R})$, $\nabla v, g\in L^1(\Omega_{a, b, R})$, and 
	\begin{equation}\label{eq:lem:2_4:2}
		\|\nabla v\|_{L^2(\Omega_{a, b, R}\setminus\Omega_{a, b, \epsilon})}+\|q\|_{L^2(\Omega_{a, b, R}\setminus\Omega_{a, b, \epsilon})} +\|g\|_{L^2(\Omega_{a, b, R}\setminus\Omega_{a, b, \epsilon})}
		=o( \sqrt{| \ln \epsilon|}), \textrm{ as }\epsilon\to 0^+. 
	\end{equation}
	Then for any $\varphi\in C_c^{\infty}(\Omega_{a, b, R})$ satisfying $\dive \varphi=0$, it holds that 
	\begin{equation}\label{eq:lem:2_4:3}
		\int_{\Omega_{a, b, R}}\nabla v\cdot \nabla \varphi 
		+g\cdot \nabla \varphi=0, 
	\end{equation}
	and $ \dive v=0$ in $\Omega_{a, b, R}$ 
	in distribution sense. 
\end{lem}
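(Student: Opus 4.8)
The plan is to run a capacity cut-off argument exploiting that the singular set $\{x'=0\}$ is a line in $\R^3$, hence of codimension two and of zero $W^{1,2}$-capacity. For small $\epsilon>0$ I would use the radial logarithmic cut-off $\eta_\epsilon(x')$ equal to $0$ for $|x'|\le\epsilon^2$, equal to $-(\ln\epsilon)^{-1}\ln(|x'|/\epsilon^2)$ for $\epsilon^2<|x'|<\epsilon$, and equal to $1$ for $|x'|\ge\epsilon$; this is exactly the inner part of the function $g_1$ constructed in the proof of Lemma \ref{lem:2_3}, and the same computation gives the capacity bound $\|\nabla\eta_\epsilon\|_{L^2(\Omega_{a,b,R})}=O(1/\sqrt{|\ln\epsilon|})$, with $\nabla\eta_\epsilon$ supported in the thin shell $\{\epsilon^2\le|x'|\le\epsilon\}$.

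Given a divergence-free $\varphi\in C_c^\infty(\Omega_{a,b,R})$, the product $\eta_\epsilon\varphi$ is smooth, compactly supported, and vanishes near $\{x'=0\}$, so it is an admissible test field for the classical pointwise identity $-\Delta v+\nabla q=\dive g$ on $\Omega_{a,b,R}\setminus\{x'=0\}$, where integration by parts is fully justified. Writing $\nabla(\eta_\epsilon\varphi)=\eta_\epsilon\nabla\varphi+\varphi\otimes\nabla\eta_\epsilon$ and $\dive(\eta_\epsilon\varphi)=\varphi\cdot\nabla\eta_\epsilon$ (using $\dive\varphi=0$), this gives
\begin{equation*}
\int_{\Omega_{a,b,R}}\eta_\epsilon(\nabla v\cdot\nabla\varphi+g\cdot\nabla\varphi)=-\int_{\Omega_{a,b,R}}\big(\nabla v\cdot(\varphi\otimes\nabla\eta_\epsilon)+g\cdot(\varphi\otimes\nabla\eta_\epsilon)-q\,\varphi\cdot\nabla\eta_\epsilon\big).
\end{equation*}
The left-hand side converges to $\int(\nabla v\cdot\nabla\varphi+g\cdot\nabla\varphi)$ by dominated convergence, since $\eta_\epsilon\to1$ a.e., $\nabla\varphi$ is bounded, and $\nabla v,g\in L^1(\Omega_{a,b,R})$.

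The heart of the matter is the right-hand side, and in particular the pressure term: multiplying by $\eta_\epsilon$ destroys the divergence-free constraint, so the term $\int q\,\varphi\cdot\nabla\eta_\epsilon$ reappears, and this is the step I expect to be the main obstacle. (One could instead correct $\eta_\epsilon\varphi$ by a Bogovski\u\i{} operator to restore solenoidality and kill the pressure term; the point of hypothesis (\ref{eq:lem:2_4:2}) is that no such correction is needed, because $q$ can be estimated directly.) I would control all three remaining terms uniformly, by Cauchy--Schwarz over the shell $\{\epsilon^2\le|x'|\le\epsilon\}$ supporting $\nabla\eta_\epsilon$: each is bounded by $C\|\varphi\|_{L^\infty}$ times $\|\nabla\eta_\epsilon\|_{L^2}$ times one of $\|\nabla v\|_{L^2}$, $\|g\|_{L^2}$, $\|q\|_{L^2}$ over $\Omega_{a,b,R}\setminus\Omega_{a,b,\epsilon^2}$. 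Hypothesis (\ref{eq:lem:2_4:2}), applied with $\epsilon^2$ in place of $\epsilon$ and using $|\ln\epsilon^2|=2|\ln\epsilon|$, makes each of these norms $o(\sqrt{|\ln\epsilon|})$, while the capacity bound gives $\|\nabla\eta_\epsilon\|_{L^2}=O(1/\sqrt{|\ln\epsilon|})$; hence every product is $o(1)$, and letting $\epsilon\to0$ yields (\ref{eq:lem:2_4:3}).

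For the distributional divergence-free statement I would test the identity $\dive v=0$, valid classically off the line, against $\eta_\epsilon\psi$ for scalar $\psi\in C_c^\infty(\Omega_{a,b,R})$, obtaining $\int\eta_\epsilon\,v\cdot\nabla\psi=-\int\psi\,v\cdot\nabla\eta_\epsilon$. Here the commutator is bounded by $\|\psi\|_{L^\infty}\|v\|_{L^2(\Omega_{a,b,R})}\|\nabla\eta_\epsilon\|_{L^2}$, which tends to $0$ using only $v\in L^2(\Omega_{a,b,R})$ together with $\|\nabla\eta_\epsilon\|_{L^2}\to0$; the left-hand side converges to $\int v\cdot\nabla\psi$ by dominated convergence. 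Thus $\int v\cdot\nabla\psi=0$ for all such $\psi$, i.e. $\dive v=0$ in $\Omega_{a,b,R}$ in the distributional sense, completing the proof.
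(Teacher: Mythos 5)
Your proposal is correct and follows essentially the same route as the paper: a logarithmic cut-off adapted to the codimension-two singular line (your $\eta_\epsilon$ is the paper's $h_\epsilon$ up to the substitution $\epsilon\mapsto\epsilon^2$), testing the equation against $\eta_\epsilon\varphi$, absorbing the commutator terms via the capacity bound $\|\nabla\eta_\epsilon\|_{L^2}=O(|\ln\epsilon|^{-1/2})$ against hypothesis (\ref{eq:lem:2_4:2}), and handling the remaining bulk term with the $L^1$ integrability of $\nabla v$ and $g$; the distributional divergence statement is treated identically.
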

\begin{proof}
	For convenience, denote $\Omega=\Omega_{a, b, R}$ and $\Omega_{\epsilon}=\Omega_{a, b, \epsilon}$. Let $\varphi\in C_c^{\infty}(\Omega)$ satisfy $\dive \varphi=0$, we first prove (\ref{eq:lem:2_4:3}). 

	Let 
	\[
		h_{\epsilon}(x): =
		\left\{
		\begin{array}{ll}
			0, & 0<|x'| \le \epsilon,\\
			\displaystyle -\frac{2}{\ln \epsilon}\ln \frac{|x'|}{\epsilon}, & \epsilon<|x'|<\sqrt{\epsilon},\\
			1, & |x'| \ge \sqrt{\epsilon}.
		\end{array}
		\right.
	\]
	We have, for some constant $C$ independent of $\epsilon$, that
	\begin{equation}\label{eq:lem:2_4:pf:1}
		\|\nabla h_{\epsilon}\|_{L^2(\Omega)}\le \frac{C}{\sqrt{| \ln \epsilon|}}.
	\end{equation}
	Multiplying the first equation in (\ref{eq:lem:2_4:1}) by $h_{\epsilon}\varphi$ and integrating on $\Omega$, we have 
	\[
		\int_{\Omega}\nabla v\cdot \nabla (h_{\epsilon}\varphi)=\int_{\Omega} q\dive (h_{\epsilon}\varphi)-\int_{\Omega}g\cdot \nabla(h_{\epsilon}\varphi).
	\]
	So 
	\begin{equation}\label{eq:lem:2_4:pf:2}
	\begin{split}
		& \int_{\Omega}\nabla v\cdot \nabla \varphi
		+ g \cdot \nabla \varphi\\
		& =\int_{\Omega} \left(q\varphi\cdot \nabla h_{\epsilon}-\nabla v\cdot (\nabla h_{\epsilon}\otimes\varphi)
		- g\cdot (\nabla h_{\epsilon}\otimes\varphi) \right) 
		+\int_{\Omega}(\nabla v\cdot \nabla \varphi
		+g \cdot\nabla\varphi)(1-h_{\epsilon})\\
		& :=I_1+I_2. 
	\end{split}
	\end{equation}
	By (\ref{eq:lem:2_4:2}) and (\ref{eq:lem:2_4:pf:1}), we have 
	\begin{equation}\label{eq:lem:2_4:pf:3}
		|I_1|\le (\|\nabla v\|_{L^2(\Omega_{\sqrt{\epsilon}}\setminus\Omega_{\epsilon})}+\|q\|_{L^2(\Omega_{\sqrt{\epsilon}}\setminus\Omega_{\epsilon})}+\|g\|_{L^2(\Omega_{\sqrt{\epsilon}}\setminus\Omega_{\epsilon})})\|\nabla h_{\epsilon}\|_{L^2(\Omega_{\sqrt{\epsilon}}\setminus\Omega_{\epsilon})}\|\varphi\|_{L^{\infty}(\Omega)}\to 0, 
	\end{equation}
	as $\epsilon\to 0$. Since $\nabla v, g\in L^1(\Omega_{a, b, R})$, we also have 
	\begin{equation}\label{eq:lem:2_4:pf:4}
		|I_2|\le \|\nabla \varphi\|_{L^{\infty}(\Omega)}(\|\nabla v\|_{L^1(\Omega_{\sqrt{\epsilon}})} 
		+\|g\|_{L^1(\Omega_{\sqrt{\epsilon}})})\to 0, \quad \textrm{ as }\epsilon\to 0.
	\end{equation}
	By (\ref{eq:lem:2_4:pf:2}), (\ref{eq:lem:2_4:pf:3}) and (\ref{eq:lem:2_4:pf:4}), we have (\ref{eq:lem:2_4:3}). 
	
	Now we prove that $\dive v=0$ in distribution sense. For any $\psi\in C_c^{\infty}(\Omega)$, multiplying the second equation (\ref{eq:lem:2_4:1}) by $h_{\epsilon}\psi$ and integrating on $\Omega$, we have 
	\[
		\int_{\Omega}v\cdot \nabla (h_{\epsilon}\psi)=0.
	\]
	Using the above, the fact that $v\in L^2(\Omega)$ and (\ref{eq:lem:2_4:pf:1}), we have 
	\[
	\begin{split}
		& |\int_{\Omega}v\cdot \nabla \psi|\le |\int_{\Omega}v\cdot \nabla (h_{\epsilon}\psi)|+|\int_{\Omega}v\cdot \nabla (\psi (h_{\epsilon}-1))|\\
		& \le \|\psi\|_{L^{\infty}(\Omega)}\|v\|_{L^2(\Omega_{\sqrt{\epsilon}})}\|\nabla h_{\epsilon}\|_{L^2(\Omega_{\sqrt{\epsilon}}\setminus\Omega_{\epsilon})}+\|\nabla \psi\|_{L^{\infty}(\Omega)}\|v\|_{L^{1}(\Omega_{\sqrt{\epsilon}})}\to 0, \textrm{ as }\epsilon\to 0.
	\end{split}
	\]
	So $\dive v=0$ in distribution sense. 
	The lemma is proved. 
\end{proof}

For any domain $\Omega\subset \mathbb{R}^n$, $s\ge 1$ and $f\in W^{-1, s}(\Omega)$, 
we say that $(v, q)\in W^{1,s}(\Omega)\times L^s(\Omega)$ is a $s$-weak solution to the Stokes system 
\[
\left\{
\begin{array}{ll}
	-\Delta v+\nabla q=f, & \textrm{ in }\Omega\\
	\dive v=0, & \textrm{ in }\Omega
\end{array}
\right.
\]
if for any $\varphi\in C_c^{\infty}(\Omega)$ satisfying $\dive \varphi=0$, it holds that 
\begin{equation}
	\int_{\Omega} \nabla v\cdot \nabla \varphi= \langle f, \varphi \rangle, 
\end{equation}
and $\dive v=0$ in distribution sense. Here $\langle \cdot, \cdot\rangle$ denotes the pairing between $W^{-1, s}(\Omega)\equiv (W^{1, s}_0(\Omega))'$ and $W^{1, s}_0(\Omega)$. 

\medskip

\noindent\emph{Proof of Theorem \ref{thm_main}}: 

For convenience, denote $\Omega_r=\Omega_{-2, -1/2, r}$ for any $r>0$. Let $\Omega:=\Omega_{\delta/16}$. 

By (\ref{eq:thm:main}), $u=o(\ln |x'|)$. Applying Lemma \ref{lem:2_1} with $\lambda=1$, $a=-3, b=-1/4, a'=-2, b'=-1/2$ and $R=\delta/16$, we have $\nabla u=o(| \ln |x'||/|x'|)$ and $p=o(| \ln |x'||/|x'|)$ in $\Omega$. It follows that $|u|^2\in L^s(\Omega)$, $u\in W^{1, s}(\Omega)$ and $p\in L^s(\Omega)$ for $1<s<2$. 

\medskip

\noindent \emph{Claim}: For any $1<s<2$, $(u, p)$ is a $s$-weak solution of 
\begin{equation}\label{eq:thm:pf:1}
\left\{
\begin{array}{ll}
	-\Delta u+\nabla p=f:=-u\cdot \nabla u, 
	& \textrm{ in }\Omega,\\ 
	\textrm{div } u=0, & \textrm{ in }\Omega.
\end{array}
\right.
\end{equation}
\noindent\emph{Proof of the claim}: 
We first show that for any $\varphi\in C_c^{\infty}(\Omega)$ satisfying $\dive \varphi=0$, 
\begin{equation}\label{eq:thm:pf:2}
	\int_{\Omega}\nabla u\cdot \nabla \varphi= \langle f, \varphi \rangle = \int_{\Omega}(u\otimes u)\cdot \nabla \varphi.
\end{equation}
Since $u=o(\ln |x'|)$ and $\nabla u=o(| \ln |x'||/|x'|)$, we have $u\in L^2(\Omega)$ and $\nabla u, u\otimes u\in L^1(\Omega)$. For any $\epsilon>0$, by Lemma \ref{lem:2_3} with $a=-3, b=-1/4, a'=-2, b'=-1/2$ and $R=\delta/16$, we have $\|\nabla u\|_{L^2(\Omega \setminus\Omega_{\epsilon})}=o(\sqrt{| \ln \epsilon|})$, and $p\in L^{\infty}_{loc}((-3, -1/4), W^{1, s}(D_{\delta/16}))$. By Sobolev embedding, $p\in L^{\infty}_{loc}((-3, -1/4), L^r(D_{\delta/16}))$ for any $1< r<\infty$, and therefore $\|p\|_{L^2(\Omega \setminus\Omega_{\epsilon})}=O(1)$. Thus 
\[
	\|\nabla u\|_{L^2(\Omega \setminus\Omega_{\epsilon})}+\|p\|_{L^2(\Omega \setminus\Omega_{\epsilon})} +\|u\otimes u\|_{L^2(\Omega \setminus\Omega_{\epsilon})}
	= o ( \sqrt{| \ln \epsilon|}), \textrm{ as }\epsilon\to 0^+. 
\]
Applying Lemma \ref{lem:2_4} with $a=-2, b=-1/2, R=\delta/16$, $v=u$, $q=p$ and $g=-u\otimes u$, we have that (\ref{eq:thm:pf:2}) holds and $\dive u=0$ in distribution sense, and therefore $(u, p)$ is a $s$-weak solution to (\ref{eq:thm:pf:1}) for any $1<s<2$. The claim is proved. 

Next, note $f=-u\cdot \nabla u\in L^s(\Omega)$ for any $1<s<2$. By interior estimates of Stokes equations (see, e.g. Theorem IV.4.1 in \cite{Galdi}), we have $(u, p)\in W^{2, s}_{loc}(\Omega)\times L^s_{loc}(\Omega)$. By Sobolev embedding, we have $u\in W^{1, s'}_{loc}(\Omega)\cap L^{\infty}_{loc}(\Omega)$ for all $1<s'<6$. Then we have $f=-u\cdot \nabla u\in W^{1, s}_{loc}(\Omega)$ for any $1<s<2$. By bootstrap argument, we have $(u, p)\in W^{m, s}_{loc}(\Omega)\times L^s_{loc}(\Omega)$ for any integer $m\ge 0$. Thus $(u, p)\in C^{\infty}(\Omega)$. The theorem is proved. 
\qed

\section{Discussion on isolated singularity behavior}\label{sec:3}

In this section, we first make some discussion on the isolated singularity behavior of $(-1)$-homogeneous solutions of (\ref{eq:NS}), then discuss some known special solutions of (\ref{eq:NS}) with isolated singularities on $\mathbb{S}^2$. 

\subsection{Types of singularities}

As mentioned in Section \ref{sec:intro}, our efforts start from studying $(-1)$-homogeneous solutions $u$ of (\ref{eq:NS}) in $C^2(\mathbb{S}^2\setminus\{P_1, \cdots, P_m\})$ with finitely many isolated singularities $P_1, \cdots, P_m$ on $\mathbb{S}^2$, where $m$ is any positive integer. In particular, we would like to investigate the asymptotic behavior near the singularities and the classification of solutions satisfying $u=O(1/\min_i {\rm dist}(x, P_i)^k)$ for some positive integer $k$. Consider a local $(-1)$-homogeneous solution $u$ in a small neighborhood of a singular point. Without loss of generality, assume the singularity is at $S$, i.e. $u\in C^2(B_{\delta}(S)\cap \mathbb{S}^2\setminus\{S\})$, and $u=O(1/|x'|^k)$. The first step is to understand the behavior of $u$ in $B_{\delta}(S)\cap \mathbb{S}^2\setminus\{S\}$. In \cite{LLY1} and \cite{LLY2}, the asymptotic expansions of all local $(-1)$-homogeneous axisymmetric solutions of (\ref{eq:NS}) in $B_{\delta}(S)\cap \mathbb{S}^2\setminus\{S\}$ were established. In particular, the following theorem can be derived from there. 
 
\begin{theorem}[\cite{LLY1,LLY2}] \label{thm:expansion}
	Let $\delta>0$, and $u\in C^2(\mathbb{S}^2 \cap B_\delta(S)\setminus\{S\})$ be a $(-1)$-homogeneous axisymmetric solution of (1). Denote $x'=(x_1,x_2)$. Then $\tau:= \lim_{x\in\mathbb{S}^2,x\to S} |x'| u_\theta$ exists and is finite, and $u = O(1/\big| |x'| \ln |x'| \big|^2)$. Moreover, 
		
	{\rm (i)} If $\tau \ge 3$, then $|x'| u_\phi$ must be a constant, and $|x'| u_\theta$ and $u_r$ must be real analytic functions in $1+\cos\theta$ near $S$ on $\mathbb{S}^2$. 
	
	{\rm (ii)} If $2 < \tau < 3$, then either $|x'|u_\phi \equiv $constant, or $\lim_{x\in\mathbb{S}^2,x\to S} |x'|^{\tau-1} u_\phi$ exists and is finite and not zero. Moreover, $\lim_{x\in\mathbb{S}^2,x\to S} |x'|^{2\tau-4} u_r$ exists and is finite. 
	
	{\rm (iii)} If $\tau=2$, then $\eta:= \lim_{x\in \mathbb{S}^2, x\to S} (|x'|u_\theta-2)\ln |x'|$ exists and is 0 or 2. 
	
	- When $\eta=0$, then $\lim_{x\in\mathbb{S}^2,x\to S} |x'|^{\epsilon} u_r = 0$ for any $\epsilon>0$. Either $|x'| u_\phi$ is a constant, or $\lim_{x\in\mathbb{S}^2,x\to S} |x'| u_\phi$ exists and is finite and not zero. 
	
	- When $\eta=2$, then $\lim_{x\in\mathbb{S}^2,x\to S} |x'|^2 \big| \ln |x'| \big|^2 u_r = -2$, and $\lim_{x\in\mathbb{S}^2,x\to S}|x'|u_\phi$ exists and is finite. 
	
	{\rm (iv)} If $\tau < 2$ and $\tau \not= 0$, then $\lim_{x\in\mathbb{S}^2,x\to S} |x'|u_\phi$ and $\lim_{x\in\mathbb{S}^2,x\to S} |x'|^{\max\{\tau,0\}} u_r$ both exist and are finite. 
	
	{\rm (v)} If $\tau=0$, then $\sigma:=\lim_{x\in\mathbb{S}^2,x\to S} |x'|u_\phi$ and 
	$\lim_{x\in\mathbb{S}^2,x\to S}u_r/\ln |x'|$ both exist and are finite. 
	 Moreover, $\tilde{u}:=u-\sigma/|x'|e_{\phi}$ is also a solution of (\ref{eq:NS}), and 
	 $\lim_{x\in\mathbb{S}^2,x\to S} |\tilde{u}|/\big| \ln |x'| \big|$ exists and is finite. 
\end{theorem}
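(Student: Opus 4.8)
The plan is to treat Theorem \ref{thm:expansion} as an extraction from the detailed asymptotic expansions of \cite{LLY1,LLY2}: I would reduce \eqref{eq:NS} to an autonomous system of ODEs in the single variable $x=\cos\theta$ and read off each assertion from a Frobenius-type study of that system at the singular endpoint $x=-1$, which corresponds to $S$. First I would record the reduction. Writing a $(-1)$-homogeneous axisymmetric field as $u=\frac1r\big(\hat u_r e_r+\hat u_\theta e_\theta+\hat u_\phi e_\phi\big)$ with $\hat u_r,\hat u_\theta,\hat u_\phi$ functions of $\theta$ alone, incompressibility becomes $\hat u_r\sin\theta+(\sin\theta\,\hat u_\theta)'=0$. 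Setting $b:=|x'|u_\theta=\sin\theta\,\hat u_\theta$ and $\Gamma:=|x'|u_\phi=\sin\theta\,\hat u_\phi$, one then has $u_r=-b'/\sin\theta$, so $u_r$ is slaved to $b$. Substituting into the momentum equations yields a closed second-order equation for $b$ coupled to a second-order equation for the swirl $\Gamma$, in which $\Gamma^2$ is the only nonlinear forcing of the $b$-equation; this is precisely the system analyzed in \cite{LLY1,LLY2}, and $\tau=\lim_{x\to-1}b$ is the boundary value of $b$ at the singular endpoint.

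Next I would establish the two facts that are independent of the case split: that $\tau$ exists and is finite, and that $u=O(1/\big||x'|\ln|x'|\big|^2)$. Finiteness of $\tau$ follows from the sign/monotonicity structure of the $b$-equation near $x=-1$ together with the a priori hypothesis $u\in C^2$, while the universal bound is the worst growth compatible with the indicial analysis, attained at the borderline resonant value. With $\tau$ fixed, the heart of the matter is the local analysis at the regular singular point $x=-1$: the linearizations of the $b$- and $\Gamma$-equations have indicial exponents depending affinely on $\tau$, and the thresholds $\tau=3$ and $\tau=2$ in the statement are exactly the values at which these exponents collide or differ by an integer, forcing logarithmic resonances. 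I would analyze the swirl first, since $\Gamma$ solves a linear equation once $b$ is known: its two Frobenius branches are a constant branch and a power branch whose exponent depends on $\tau$, the over-singular branch being excluded by the a priori bound. Matching these against the admissible growth produces the dichotomies ``$|x'|u_\phi\equiv$const, or $\lim|x'|^{\tau-1}u_\phi$ exists finite and nonzero'' in the respective ranges, and $\lim|x'|u_\phi=\sigma$ in case (v).

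Then I would recover $u_r$ from $u_r=-b'/\sin\theta$. Because $b$ has a (possibly log-modified) expansion in $1+\cos\theta$, the growth of $u_r$ is governed by the next exponent in that expansion, while the nonlinear term $\Gamma^2$ injects the exponent that produces the stated rates: $|x'|^{2\tau-4}u_r$ finite in case (ii), $|x'|^{\max\{\tau,0\}}u_r$ finite in case (iv), and the logarithmic law $|x'|^2\big|\ln|x'|\big|^2u_r\to-2$ at the double-resonant point $\tau=2,\ \eta=2$, with the borderline $u_r\sim\ln|x'|$ appearing at $\tau=0$. The real-analyticity in $1+\cos\theta$ when $\tau\ge3$ is then classical Frobenius theory: in that range the indicial roots are non-resonant, so the formal power series in $1+\cos\theta$ converges, which simultaneously forces $\Gamma$ to be the constant branch and makes $b$ and $u_r$ analytic.

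The main obstacle I anticipate is the resonance bookkeeping at $\tau=2$ and $\tau=3$. Away from these values the Frobenius exponents are simple and the expansions are routine, but at the thresholds the two branches coincide and one must decide whether the resonance genuinely produces a logarithm or whether its coefficient vanishes; this is exactly the alternative $\eta\in\{0,2\}$ in case (iii) and the rigidity ``$|x'|u_\phi$ must be constant'' in case (i). Resolving it requires computing the relevant resonance coefficient from the nonlinear coupling $\Gamma^2$ rather than from the linear indicial data alone, and it is here that the expansions of \cite{LLY1,LLY2} do the real work; the present proof then consists in matching those expansions to the limits $\tau$, $\eta$, $\sigma$ and to the stated power and logarithmic rates.
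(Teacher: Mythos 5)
Your proposal follows essentially the same route as the paper: both reduce \eqref{eq:NS} to the ODE system \eqref{eq:NS:0} in $y=\cos\theta$ for $U_\theta=|x'|u_\theta$ and $U_\phi=|x'|u_\phi$, identify $\tau$ as the boundary value of $U_\theta$ at $y=-1$, and then read off every assertion by matching against the expansions of $U_\theta$ and $U_\phi$ established in \cite{LLY1,LLY2} (Theorems 1.3--1.4 of \cite{LLY1}, Theorems 1.1--1.2 and Lemma 2.3 of \cite{LLY2}), exactly as you concede at the end. The one substantive mechanical difference is how $u_r$ is recovered: you propose to differentiate the expansion of $b=U_\theta$ via $u_r=-b'/\sin\theta$, which requires justifying termwise differentiation of an asymptotic series, whereas the paper sidesteps this by rewriting the first equation of \eqref{eq:NS:0} in the integrated form \eqref{eq:NS:2}, which expresses $(1-y^2)u_r$ \emph{algebraically} in terms of $U_\theta$, $U_\phi$ and an integral of $U_\phi^2$; substituting the known expansions there yields the stated rates for $u_r$ (including the finiteness of $|x'|^{2\tau-4}u_r$ in (ii) and the law $u_r\sim -2\big(|x'|\,\ln|x'|\big)^{-2}$ in (iii)) with no extra regularity argument. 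Two smaller omissions: you do not address the final claim of (v), that $\tilde u=u-\sigma|x'|^{-1}e_\phi$ is again a solution of \eqref{eq:NS} --- in the paper this is immediate because \eqref{eq:NS:0} is invariant under $U_\phi\mapsto U_\phi-C$ --- and your sketch of why $\tau$ exists and is finite (the \emph{sign/monotonicity structure}) is itself the content of Theorem 1.3 of \cite{LLY1}, which both you and the paper must in the end simply cite rather than reprove.
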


\medskip

Similar results in Theorem \ref{thm:expansion} hold for solutions $u\in C^2(\mathbb{S}^2 \cap B_\delta(N)\setminus\{N\})$. Theorem \ref{thm:expansion} can be concluded from \cite{LLY1} and \cite{LLY2}, but some of the statements are not explicitly listed there. For the sake of completeness, here we briefly describe how Theorem \ref{thm:expansion} is concluded from the results in \cite{LLY1} and \cite{LLY2}. 

\begin{proof}
	Let $y:=\cos\theta$ and $U:= u\cdot r \sin\theta$. Since $u$ is $(-1)$-homogeneous and axisymmetric, we know that $U$ depends only on $y\in (-1, 1)$. Note $y=1$ and $-1$ correspond to the north and south pole $N$ and $S$ of $\mathbb{S}^2$ respectively, while $-1<y<1$ corresponds to $\mathbb{S}^2\setminus\{S,N\}$. The divergence free condition in (\ref{eq:NS}) is equivalent to $ru_r=dU_{\theta}/dy$, and (\ref{eq:NS}) is reduced to the following system of $U_{\theta}$ and $U_{\phi}$, 
	\begin{equation}\label{eq:NS:0}
		\left\{
		\begin{aligned}
			& (1-y^2) \frac{d}{dy}U_\theta + 2y U_\theta +\frac{1}{2} U_\theta^2 + \int_{y_0}^y \int_{y_0}^l \int_{y_0}^t \frac{2 U_\phi(s) \frac{d}{ds}U_\phi(s)}{1-s^2} ds dt dl = b_1 y^2 +b_2 y + b_3, \\
			& (1-y^2) \frac{d^2}{dy^2}U_\phi + U_\theta \frac{d}{dy}U_\phi = 0, 
		\end{aligned}
		\right. 
	\end{equation}
	for any $y_0\in (-1, 1)$ and some constants $b_1, b_2, b_3\in\mathbb{R}$. 
	
	By Theorem 1.3 in \cite{LLY1}, we have that $\tau:=\lim_{x\in\mathbb{S}^2, x\to S}|x'|u_{\theta}$ exists and is finite. 
	
	(i) By Theorem 1.4 in \cite{LLY1} we have that $|x'|u_{\phi}$ must be a constant when $\tau \ge 3$. So (\ref{eq:NS:0}) is reduced to 
	\begin{equation}\label{eq:NS:1}
		(1-y^2)\frac{d}{dy}U_{\theta}(y)+2yU_{\theta}(y)+\frac{1}{2}U^2_{\theta}(y)=c_1(1-y)+c_2(1+y)+c_3(1-y^2), 
	\end{equation}
	for some real constants $c_1, c_2, c_3$. Then by Theorem 1.1, Theorem 1.2 and Lemma 2.3 in \cite{LLY2}, we have that $|x'|u_{\theta}=U_{\theta}$ is a real analytic function in $1+\cos\theta$ near $S$ on $\mathbb{S}^2$. By the divergence free condition, $u_r=\frac{d}{dy}U_{\theta}(y)$ is also a real analytic function in $1+\cos\theta$ near $S$ on $\mathbb{S}^2$.
	
	(ii) When $2<\tau<3$, by Theorem 1.3 and 1.4 in \cite{LLY1}, we know for $x\in \mathbb{S}^2$ that 
	\begin{align}
		U_\theta = & \tau + a_1 |x'|^{6-2\tau} + a_2 |x'|^2 + O(|x'|^{4(3-\tau)-\epsilon}), \label{eq:Uth:2} \\
		U_\phi = & d_0 + d_1 \big( |x'|^{2-\tau} + d_2 |x'|^{8-3\tau} + d_3 |x'|^{4-\tau} + O(|x'|^{14-5\tau-\epsilon}) \big). \label{eq:Uph:2}
	\end{align}
	Note $U_{\phi}=u_{\phi}r\sin\theta=|x'|u_{\phi}$ and $\tau>2$. If $d_1=0$, then $|x'|u_\phi \equiv d_0$ is a constant. If $d_1\ne 0$, then $\lim_{x\in\mathbb{S}^2,x\to S} |x'|^{\tau-1} u_\phi=d_1$, which is finite and nonzero. 		
	
	The behavior of $u_r$ on $\mathbb{S}^2$ is obtained by using the fact $u_r=dU_{\theta}/dy$ on $\bS^2$ and the above behavior of $U_{\theta}$. Write the first equation in (\ref{eq:NS:0}) as 
	\begin{equation}\label{eq:NS:2}
		(1-y^2)u_r = - 2 y U_\theta - \frac{1}{2} U_\theta^2 + \int_{y_0}^{y} \frac{U_{\phi}^2 (s-y) (1-sy) }{ (1-s^2)^2 } ds + b_1 y^2 + b_2 y + b_3.
	\end{equation}
	Note that 
	\[
		(s-y)(1-sy) = - y (1+s)^2 + (1+y)^2 s. 
	\]
	Then the expansion of $u_r$ is obtained by substituting the expansions in (\ref{eq:Uth:2}) and (\ref{eq:Uph:2}) into (\ref{eq:NS:2}). 
	
	(iii) When $\tau=2$, by Theorem 1.3 and Theorem 1.4 in \cite{LLY1}, 
	\[
		U_\theta = 2 + \frac{\eta}{\ln|x'|} + O\big( | \ln |x'||^{-2+\epsilon} \big), 
	\]
	where $
		\eta:= \lim_{x\in \mathbb{S}^2, x\to S} (|x'|u_\theta-2) \ln |x'|=0\textrm{ or 2}$. 
	If $\eta=0$, then \[
		U_\phi = d_1 \ln |x'| + d_2 + d_1 O(|x'|^{1-\epsilon}). 
	\]
	If $d_1=0$, then $ |x'| u_\phi=U_\phi$ is a constant. If $d_1\not=0$, then $\lim_{x\in\mathbb{S}^2,x\to S} |x'| u_\phi / \ln |x'|=d_1$, which is finite and not zero. 

	If $\eta=2$, then 
	\[
		U_\phi = U_\phi(-1) + \frac{d_3}{\ln |x'|} + O\big( (\ln |x'|)^{-2+\epsilon} \big).
	\]
	So $U_\phi$ is bounded and $\lim_{x\in\mathbb{S}^2,x\to S}|x'|u_\phi$ exists and is finite. 
	
	The expansion of $u_r$ is obtained by substituting the expansions of $U_\theta$ and $U_\phi$ into (\ref{eq:NS:2}). It can be proved that: If $\eta=0$, then $\lim_{x\in\mathbb{S}^2,x\to S} |x'|^{\epsilon} u_r = 0$ for any $\epsilon>0$. If $\eta=2$, then $\lim_{x\in\mathbb{S}^2,x\to S} |x'|^2 \big| \ln |x'| \big|^2 u_r = -2$. 
	
	(iv) When $\tau<2$ and $\tau\not=0$, by Theorem 1.3 and Theorem 1.4 in \cite{LLY1}, we know that 
	\begin{align}
		U_\theta = & \tau + a_1 |x'|^{2-\tau} + a_2 |x'|^2 + O(|x'|^{4-2\tau-\epsilon}) + O(|x'|^{2-\epsilon}), \label{eq:Uth:4} \\
		U_\phi = & U_\phi(-1) + d_1 |x'|^{2-\tau} + d_2 |x'|^{4-2\tau} + d_3 |x'|^{4-\tau} + O(|x'|^{6-\tau-\epsilon}) + O(|x'|^{6-3\tau-\epsilon}). \label{eq:Uph:4}
	\end{align}
	It is easy to see that $\lim_{x\in\mathbb{S}^2,x\to S} |x'|u_\phi$ exists and is finite. The asymptotic behavior of $u_r$ can be obtained by substituting the expansions (\ref{eq:Uth:4}) and (\ref{eq:Uph:4}) into (\ref{eq:NS:2}). In the present case, $\lim_{x\in\mathbb{S}^2,x\to S} |x'|^{\max\{\tau,0\}} u_r$ exists and is finite.
	
	(v) When $\tau=0$, by Theorem 1.3 and Theorem 1.4 in \cite{LLY1}, we know that 
	\begin{align}
		U_\theta = & a_1 |x'|^2 \ln |x'| + a_2 |x'|^2 + O\big( |x'|^{4-\epsilon} \big), \label{eq:Uth:5} \\
		U_\phi = & U_\phi(-1) + d_1 |x'|^{2} + d_2 |x'|^{4} \ln |x'| + d_3 |x'|^4 + O\big( |x'|^{6-\epsilon} \big). \label{eq:Uph:5}
 	\end{align}
	Then $\lim_{x\in\mathbb{S}^2,x\to S} |x'|u_\phi$ exists and is finite. The asymptotic behavior of $u_r$ can be obtained by substituting the expansions (\ref{eq:Uth:5}) and (\ref{eq:Uph:5}) into (\ref{eq:NS:2}). Then we have
	\[
		u_r = c_1 \ln |x'| + O(1), 
	\]
	for some nonzero constant $c_1$, and $\lim_{x\in\mathbb{S}^2,x\to S} u_r/ \ln |x'|$ exists and is finite. Since for any solution $(U_{\theta}, U_{\phi})$, $(U_{\theta}, U_{\phi}-C)$ is also a solution of (\ref{eq:NS:0}) for any constant $C$. By taking $C=U_{\phi}(-1)$, we have $\tilde{u}=u-U_{\phi}(-1)/|x'|e_{\phi}$ is also a solution of (\ref{eq:NS}) and satisfies that $\lim_{x\in\mathbb{S}^2,x\to S} |\tilde{u}|/\big| \ln |x'| \big|$ exists and is finite by combining the above with (\ref{eq:Uth:5}) and (\ref{eq:Uph:5}). 	
	
	In the end, combining all above argument, we have $u=O( 1 / \big| |x'| \ln |x'| \big|^2) $ for all $(-1)$-homogeneous axisymmetric solutions, which completes the proof. 
\end{proof}

In view of Theorem \ref{thm:expansion}, 
all $(-1)$-homogeneous axisymmetric solutions of (\ref{eq:NS}) in $C^{\infty}(\mathbb{S}^2\setminus\{S, N\})$ are of the following three mutually exclusive types:
\begin{enumerate}
	\item[]Type 1. Landau solutions, satisfying $\sup_{|x|=1}|u(x)|<\infty$; 
	\item[]Type 2. Solutions satisfying $0<\limsup_{|x|=1,x'\to 0}|u(x)|/| \ln |x'||<\infty$; 
	\item[]Type 3. Solutions satisfying $\limsup_{|x|=1,x'\to 0}|x'||u(x)|>0$. 
\end{enumerate}
This classification is equivalent to the one in \cite{LY7}, which is given as follows: 
\begin{enumerate}
	\item[]Type 1'. Landau solutions, satisfying $\sup_{|x|=1}|\nabla u(x)|<\infty$; 
	\item[]Type 2'. Solutions satisfying 
	$0<\limsup_{|x|=1,x'\to 0}|x'||\nabla u(x)|<\infty$; 
	\item[]Type 3'. Solutions satisfying 
	$\limsup_{|x|=1,x'\to 0}|x'|^2|\nabla u(x)|>0$.
\end{enumerate}
Below we briefly explain why these two classifications are equivalent. 

To see Type 1 and Type 1' solutions are the same, we claim that a $(-1)$-homogeneous solution $u$ of (\ref{eq:NS}) in $C^{\infty}(\mathbb{S}^2\setminus\{S, N\})$ is a Landau solution if and only if $\sup_{|x|=1}|u(x)|<\infty$ if and only if $\sup_{|x|=1}|\nabla u(x)|<\infty$. Note Landau solutions are smooth on $\mathbb{S}^2$, thus satisfy $\sup_{|x|=1}(|u(x)|+|\nabla u(x)|)<\infty$. On the other hand, $\sup_{|x|=1}|\nabla u(x)|<\infty$ implies $\sup_{|x|=1}|u(x)|<\infty$. Moreover, if $\sup_{|x|=1}|u(x)|<\infty$, then $u=o(\ln |x'|)$ on $\mathbb{S}^2$ as $x\to N, S$. Then by Theorem \ref{thm_main}, $u$ is in $C^{\infty}(\mathbb{S}^2)$ and thus must be a Landau solution in view of Theorem \ref{thm:Sverak}.

By integrating the conditions in $|\nabla u|$ for Type 2' and Type 3' on $\mathbb{S}^2$ near $S$ and $N$, we see Type 2' solutions must be of Type 2, and Type 3' solutions must be of Type 3.

Now suppose $u$ is a $(-1)$-homogeneous axisymmetric solution of Type 2, we show it must be of Type 2'. Since $u$ is $(-1)$-homogeneous and axisymmetric, we have 
\begin{equation}\label{eq:nabla:u}
\begin{split}
	\nabla u
	= & -\frac{u_r}{r^2}e_r\otimes e_r+(\frac{1}{r}\frac{\partial u_r}{\partial \theta}-\frac{u_{\theta}}{r})e_r\otimes e_{\theta}-\frac{u_{\phi}}{r}e_{r}\otimes e_{\phi}	- \frac{u_{\theta}}{r^2}e_{\theta}\otimes e_r \\
	& +(\frac{1}{r}\frac{\partial u_{\theta}}{\partial \theta}+\frac{u_{r}}{r})e_{\theta}\otimes e_{\theta} - \cot\theta\frac{u_{\phi}}{r}e_{\theta}\otimes e_{\phi}
	-\frac{u_{\phi}}{r^2}e_{\phi}\otimes e_r+\frac{1}{r}\frac{\partial u_{\phi}}{\partial \theta}e_{\phi}\otimes e_{\theta} \\
	& + (\cot\theta\frac{u_{\theta}}{r}+\frac{u_{r}}{r})e_{\phi}\otimes e_{\phi} \\
\end{split}
\end{equation}
Without loss of generality, consider $x\to S$ on $\mathbb{S}^2$ as $x'\to 0$. As explained in the proof of Theorem \ref{thm:expansion}, (\ref{eq:NS}) for $(-1)$-homogeneous axisymmetric solutions is reduced to (\ref{eq:NS:0}) for $y=\cos\theta$ and $U(y)=ur\sin\theta$, and the divergence free condition is equivalent to $ru_r=U'_{\theta}(y)$, where we use `` $'$ " to denote differentiation in $y$. Note Type 2 solutions satisfy $U_{\theta}(-1)=0$. By (\ref{eq:Uth:5}) and (\ref{eq:Uph:5}), $U_{\theta}=O(|x'|^2\ln |x'|)$ and $U_{\phi}=O(|x'|^2)$. Note $U'_{\theta}=ru_r=O(\ln |x'|)$ on $\bS^2$ by Theorem \ref{thm:expansion} (v). By the second equation in (\ref{eq:NS:0}), we have $U'_{\phi}=O(1)$. Note $r\sin\theta=|x'|$, we have 
\[
	u_{\theta}=\frac{U_{\theta}}{|x'|}=O(|x'| \ln |x'|), \quad u_{\phi}=\frac{U_{\phi}}{|x'|}=O(|x'|), \quad u_r=O(\ln |x'|).
\]
\[
	\frac{\partial u_{\theta}}{\partial\theta}=-\frac{1}{r^2}(U'_{\theta}+\frac{\cos\theta}{\sin^2\theta}U_{\theta})=O(\ln |x'|), \quad \frac{\partial u_{\phi}}{\partial \theta}=-\frac{1}{r^2}(U'_{\phi}+\frac{\cos\theta}{\sin^2\theta}U_{\phi})=O(1). 
\]
By the above and (\ref{eq:nabla:u}), we have 
\[
	\nabla u=\frac{1}{r}\frac{\partial u_r}{\partial \theta}e_r\otimes e_{\theta}+O(\ln |x'|)=-\frac{\sin\theta}{r^2}U''_{\theta}e_r\otimes e_{\theta}+O(\ln |x'|). 
\]
Differentiating the first equation in (\ref{eq:NS:0}), plugging the behavior of $U_{\theta}, U_{\theta}', U_{\phi}, U_{\phi}'$, it can be shown that $0<\limsup_{|x|=1, x'\to 0}|x'|^2|U''_{\theta}|<\infty$. Then the above implies $0<\limsup_{|x|=1,x'\to 0}|x'||\nabla u(x)|<\infty$, and $u$ is of Type 2'. 

Similarly, if a solution is of Type 3, consider $x\to S$ on $\mathbb{S}^2$. By Theorem \ref{thm:expansion}, we have $\tau=U_{\theta}(-1)=\lim_{|x|=1, x\to S}|x'|u_{\theta}\ne 0$. By Theorem 1.3 and 1.4 in \cite{LLY1}, we have the behavior of $U_{\theta}$ and $U_{\phi}$ corresponding to each $\tau$. By the second equation of (\ref{eq:NS:0}), one can obtain the behavior of $U_{\phi}'$. Taking derivative of the first equation in (\ref{eq:NS:0}), using the behavior of $U_{\theta}, U'_{\theta}, U_{\phi}$ and $U'_{\phi}$ for each $\tau$ respectively, the behavior of $U''_{\theta}$ can be obtained. Then with the estimation of $U_{\theta}, U_{\theta}', U_{\theta}'', U_{\phi}, U_{\phi}'$ and (\ref{eq:nabla:u}), we have that $u$ is of Type 3'. We omit the detail here. 

\subsection{Some examples of special solutions}

Below we discuss some special $(-1)$-homogeneous solutions of (\ref{eq:NS}) with isolated singularities on $\mathbb{S}^2$ and their asymptotic behavior. Due to the $(-1)$-homogeneity, we only consider the equations on $\mathbb{S}^2$ and all solution formulas in the following examples are given on $\mathbb{S}^2$. 

\subsubsection{Homogeneous axisymmetric solutions of (\ref{eq:NS})} 
 
In this section, we give some examples of $(-1)$-homogeneous axisymmetric solutions of (\ref{eq:NS}) in $C^2(\mathbb{S}^2\setminus\{S, N\})$ and discuss about their singularity behavior. The no-swirl solutions with one singularity at $S$ are classified in \cite{LLY1}, and the no-swirl solutions with two singularities at $S$ and $N$ are classified in \cite{LLY2}. The $(-1)$-homogeneous axisymmetric solutions with nonzero swirl nearby the no-swirl solutions surface in $C^2(\mathbb{S}^2\setminus\{S\})$ and $C^2(\mathbb{S}^2\setminus\{S, N\})$ were constructed respectively in \cite{LLY1} and \cite{LLY3}.
The asymptotic behavior of these solutions are described by Theorem \ref{thm:expansion}. 

\medskip

\noindent\textbf{(a) No-swirl solutions in $C^{\infty}(\mathbb{S}^2\setminus\{S\})$}

With one singularity on $\bS^2$, all $(-1)$-homogeneous axisymmetric no-swirl solutions in $C^2(\mathbb{S}^2\setminus\{S\})$ are classified in \cite{LLY1}. 

\begin{example}[\cite{LLY1}]\label{example:remove:S}
	Let $\mathcal{I} := \{(\tau,\sigma)\in\mathbb{R}^2\mid \tau\le 2, \sigma \le \frac{1}{4}(4-\tau)\}\cup \{(\tau,\sigma)\mid \tau\ge 2, \sigma =\frac{\tau}{4}\}$. Then for every $ (\tau, \sigma)\in \mathcal{I}$, there exists a unique $u_{\theta}\in C^{\infty}(\mathbb{S}^2\setminus\{S\})$ such that the corresponding $(u,p)$ is a solution to (\ref{eq:NS}) on $\mathbb{S}^2\setminus\{S\}$, satisfying $\lim_{\theta\to \pi^-} u_{\theta}\sin\theta=\tau$ and $\lim_{\theta\to 0^+}u_{\theta}/\sin\theta=\sigma$. Moreover, these are all the axisymmetric no-swirl solutions in $C^2(\mathbb{S}^2\setminus\{S\})$.	The explicit expressions of these solutions are as follows. 
	\begin{equation}
		u_\theta = 
		\left\{
		\begin{array}{ll}
			\frac{\displaystyle 1-\cos\theta}{\displaystyle \sin\theta}\left(1- b - \frac{\displaystyle 2 b(1-2\sigma-b)}{\displaystyle(1-2\sigma+b) (\frac{1+\cos\theta}{2})^{-b} + 2\sigma-1 + b} \right), & \tau<2, \\
			\frac{\displaystyle 1-\cos\theta}{\displaystyle \sin\theta} \left( 1+ \frac{\displaystyle 2 (1-2\sigma) }{ \displaystyle (1-2\sigma) \ln \frac{1+\cos\theta}{2} - 2 } \right), & \tau=2, \\
			\frac{\displaystyle (1+b)(1-\cos\theta)}{\displaystyle \sin\theta}, & \tau>2, 
		\end{array}
		\right. 
	\end{equation}
	where $(\tau, \sigma)\in \mathcal{I}$ and $b := |1-\frac{\tau}{2}|$, and $u_{r}$, $p$ can be determined by 
	\begin{equation}\label{eq:example:remove:S}
		u_r = - \frac{d u_{\theta}}{d \theta} - u_{\theta} \cot \theta, \quad p=u_r-\frac{1}{2}u_{\theta}^2 + \textrm{const}, 
		\quad \textrm{ on }\mathbb{S}^2. 
	\end{equation}
	Note the first equation of (\ref{eq:example:remove:S}) is equivalent to the divergence free condition $\dive u=0$ for $(-1)$-homogeneous axisymmetric solutions of (\ref{eq:NS}). 

	The solutions $u^{\tau, \sigma}$ are of Type 1 when $\tau=0$, and are of Type 3 when $\tau\ne 0$ regarding their behavior near the south pole. There is no Type 2 solution in $\{u^{\tau, \sigma}\mid (\tau, \sigma)\in \mathcal{I}\}$.
	
	To be precise, $\{(u_{\theta})_{\tau,\sigma} \mid \tau=0, \sigma\in (-\infty,0)\cup (0,1)\}$ are Landau solutions. They can also be rewritten as 
	\[
		u_{\theta}=\frac{2\sin\theta}{\frac{2-\sigma}{\sigma}+\cos\theta}. 
	\]
	Note when $\tau=2$, in view of (\ref{eq:example:remove:S}), we have 
	\[
		u_r = - \Bigg( 1+ \frac{ 2 ( 1 - 2 \sigma) }{ \displaystyle ( 1 - 2 \sigma) \ln \frac{ 1 + \cos \theta }{2} - 2 } \Bigg) - \frac{ 2 ( 1 - 2 \sigma )^2( 1 - \cos \theta ) }{ \Big( \displaystyle ( 1 - 2 \sigma ) \ln \frac{ 1 + \cos \theta }{2} - 2 \Big)^2 ( 1 + \cos \theta ) }. 
	\]
	So $\lim_{x\in \mathbb{S}^2, x\to S} |x'|^2 \big| \ln |x'| \big|^2 u_r = - 2$. By Theorem \ref{thm:expansion}, all $(-1)$-homogeneous axisymmetric solutions of (\ref{eq:NS}) satisfies $u=O(1/\big| |x'| \ln |x'| \big|^2)$. By the above example of solutions when $\tau=2$, this estimate is optimal.
\end{example}

\medskip

\noindent\textbf{(b) No-swirl solutions in $C^{\infty}(\mathbb{S}^2\setminus\{S, N\})$}

In \cite{LLY2}, all $(-1)$-homogeneous axisymmetric no-swirl solutions in $C^2(\mathbb{S}^2 \setminus \{ S, N\})$ were classified. For these solutions, $u_{\phi}=0$ and (\ref{eq:NS}) is reduced to (\ref{eq:NS:1}) for $y=\cos\theta$ and $U=u\sin\theta$ on $\bS^2$. The divergence free condition gives $u_{r} =dU_{\theta}/dy$, and $p$ can be determined by $p=u_r-u_{\theta}^2/2 + \text{const}$ on $\bS^2$. Denote $c := (c_1,c_2,c_3)$, and let
\[
	\bar{c}_3 := - \frac{1}{2} \left( \sqrt{1+c_1} + \sqrt{1+c_2} \right) \left( \sqrt{1+c_1} + \sqrt{1+c_2} + 2\right),
\] and 
\begin{equation}\label{eq:J}
	J:=\{c\in\mathbb{R}^3\mid c_1\ge -1, c_2\ge -1, c_3\ge \bar{c}_3\}.
\end{equation}
In \cite{LLY2}, it was proved that there exist $\gamma^-,\gamma^+\in C^0(J, \mathbb{R})$, satisfying $\gamma^-(c)<\gamma^+(c)$ if $c_3>\bar{c}_3(c_1,c_2)$, and $\gamma^-(c)=\gamma^+(c)$ if $c_3=\bar{c}_3(c_1,c_2)$, such that equation (\ref{eq:NS:1}) has a unique solution $U_{\theta}^{c,\gamma}$ in $C^{\infty}(-1,1)\cap C^0[-1,1]$ satisfying $U_{\theta}^{c,\gamma}(0)=\gamma$ for every $c \in J$ and $\gamma^-(c)\le \gamma \le \gamma^+(c)$. Define
\begin{equation*}
	I:= \{(c, \gamma)\in \mathbb{R}^4 \mid c_1\geq -1, c_2\geq -1, c_3\geq \bar{c}_3(c_1,c_2), \gamma^-(c)\leq \gamma \leq \gamma^+(c) \}, 
\end{equation*}
and
\begin{equation}
\begin{split}
	& u^{c,\gamma}:= u^{c,\gamma}_re_r+u^{c,\gamma}_{\theta}e_{\theta}=(\frac{d}{dy}U^{c,\gamma}_{\theta})e_r+\frac{U^{c,\gamma}_{\theta}}{\sin\theta}e_{\theta}, \\
	& p^{c,\gamma}:=u^{c,\gamma}_r-\frac{1}{2}(u^{c,\gamma}_{\theta})^2 + c_3 = \frac{d}{dy}U^{c,\gamma}_{\theta}-\frac{(U^{c,\gamma}_{\theta})^2}{2\sin^2\theta} + c_3.
\end{split}
\end{equation}
Then $\{(u^{c,\gamma}, p^{c,\gamma})\mid (c, \gamma)\in I\}$ are all the $(-1)$-homogeneous axisymmetric no-swirl solutions of (\ref{eq:NS}) in $C^{\infty}(\bS^2\setminus\{S, N\})$. 

In particular, it is obtained in \cite{LLY2} that $\tau:=U_{\theta}(-1)=2\pm 2\sqrt{1+c_1}$ and $\tilde{\tau}:=U_{\theta}(1)=-2\pm 2\sqrt{1+c_2}$, and the behavior of the solutions near $S$ and $N$ is described by Theorem \ref{thm:expansion} and its analogous result near $N$ for different values of $\tau$ and $\tilde{\tau}$. For $(c, \gamma)\in I$ with $\gamma^{-}(c)<\gamma<\gamma^{+}(c)$, the solutions $u^{c,\gamma}$ are of Type 1 if $c_1=c_2=c_3=0$, Type 2 if $c_1=c_2=0$, $c_3\ne 0$, and Type 3 if $c_1\ne 0$ or $c_2\ne 0$. If $\gamma=\gamma^{+}(c)$ or $\gamma^{-}(c)$, then $u^{c, \gamma}$ is of Type 3. 

\medskip

\noindent\textbf{(c) Homogeneous axisymmetric solutions with Type 2 singularities} 

Let us identify all Type 2 $(-1)$-homogeneous axisymmetric solutions in $C^2(\bS^2\setminus\{S, N\})$. We first consider no-swirl solutions. Let $\{(u^{c, \gamma}, p^{c, \gamma})\mid (c, \gamma)\in I\}$ be the $(-1)$-homogeneous axisymmetric no-swirl solutions of (\ref{eq:NS}) in $C^2(\bS^2\setminus\{S, N\})$ as described above. In \cite{LY} (Corollary 2.1), it is proved that if $c_1=c_2=0$ and $\lim_{x\in\mathbb{S}^2,x'\to 0} |x'|u_{\theta}^{c,\gamma} = 0$, then 
\begin{equation*}
	u^{c,\gamma}_{\theta}(x)=-\frac{c_3\mathrm{sgn}(x_3)|x'|}{|x|^2}\ln \frac{|x'|}{|x|}+\frac{O(1)(|c|+|\gamma|)|x'|}{|x|^2}, 
\end{equation*}
\begin{equation*}
	u^{c,\gamma}_{r}(x)=\frac{2c_3}{|x|}\ln\frac{|x'|}{|x|}+\frac{O(1)(|c|+|\gamma|)}{|x|}.
\end{equation*}
So $u^{c,\gamma}=u_re_r+u_{\theta}e_{\theta}$ satisfies 
\begin{equation}\label{eq:limit}
	\lim_{x\in \mathbb{S}^2, x\to S}\frac{|u^{c,\gamma}|}{\ln |x'|}= \lim_{x\in \mathbb{S}^2, x\to N}\frac{|u^{c,\gamma}|}{\ln |x'|}=-2|c_3|. 
\end{equation}
This in particular implies that for any $\alpha>0$, there exists a $(-1)$-homogeneous axisymmetric no-swirl solution $(u, p)\in C^{\infty}(\mathbb{S}^2\setminus\{S, N\})$ of (\ref{eq:NS}), such that $\lim_{|x'|\to 0}|u|/\ln |x'|=-\alpha$ on $\mathbb{S}^2$. As a consequence, the removable singularity result Theorem \ref{thm_main} is optimal. An explicit solution with $c_3=-4$ is given below in Example \ref{example:least:sing}. The asymptotic stability of such $(u^{c,\gamma}, p^{c, \gamma})$ was proved in \cite{LY}. 

Below we show that without the assumption of no-swirl, all Type 2 homogeneous axisymmetric solutions of (\ref{eq:NS}) on $\mathbb{S}^2\setminus\{S, N\}$ must be the $u^{c, \gamma}$ with $(c, \gamma)\in I$ and $c_1=c_2=0$. In particular, this implies that there are no Type 2 $(-1)$-homogeneous axisymmetric solutions of (\ref{eq:NS}) with a single singularity on $\mathbb{S}^2$. 

\begin{lem}\label{lem:3_1}
	Suppose $u\in C^2(\mathbb{S}^2\setminus\{S, N\})$ is a $(-1)$-homogeneous axisymmetric solution of (\ref{eq:NS}) satisfying $\limsup_{|x|=1,x'\to 0}|u(x)|/| \ln |x'||<\infty$. Then $u_{\phi}\equiv 0$, and $u$ must be the solutions $u^{c, \gamma}$ with $(c, \gamma)\in I$ satisfying $c_1=c_2=0$, and $u$ satisfies (\ref{eq:limit}). 
	In particular, let $P$ be $S$ or $N$, then there does not exist $(-1)$-homogeneous axisymmetric solution $u\in C^2(\mathbb{S}^2\setminus\{P\})$ of (\ref{eq:NS}) satisfying $0<\limsup_{|x|=1,x\to P}|u(x)|/| \ln |x'||<\infty$. 
\end{lem}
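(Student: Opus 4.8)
The plan is to pass to the reduced ODE system (\ref{eq:NS:0}) for $U_\theta = |x'|u_\theta$ and $U_\phi = |x'|u_\phi$ and to read off the endpoint data from Theorem \ref{thm:expansion}. Since the hypothesis gives $|u| = O(|\ln|x'||)$ as $x' \to 0$ on $\mathbb{S}^2$, each of $u_\theta, u_\phi, u_r$ is $O(|\ln|x'||)$, so $U_\theta = |x'|u_\theta$ and $U_\phi = |x'|u_\phi$ tend to $0$ both as $x\to S$ and as $x\to N$. In the notation of Theorem \ref{thm:expansion} this forces $\tau := U_\theta(-1) = 0$ and, by the analogue at the north pole, $\tilde\tau := U_\theta(1) = 0$; we are therefore in case (v), which guarantees that $\sigma = \lim_{x\to S}|x'|u_\phi$ exists and is finite, and the growth bound pins it down to $\sigma = U_\phi(-1) = 0$, with likewise $U_\phi(1) = 0$. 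Thus $U_\theta$ and $U_\phi$ extend continuously to $[-1,1]$ and vanish at both endpoints $y = \pm 1$.

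The crux of the proof, and the step I expect to be the only genuinely nontrivial one, is to deduce $u_\phi \equiv 0$. The swirl equation, i.e. the second line of (\ref{eq:NS:0}), is $(1-y^2)U_\phi'' + U_\theta U_\phi' = 0$ on $(-1,1)$. I would view this as a first-order linear equation for $w := U_\phi'$, namely $w' = -\tfrac{U_\theta}{1-y^2}w$, which integrates to $w(y) = w(y_0)\exp\!\left(-\int_{y_0}^{y}\tfrac{U_\theta(s)}{1-s^2}\,ds\right)$ for any $y_0, y \in (-1,1)$; the integrand is continuous on the open interval, so no integrability issue arises. Hence $U_\phi'$ keeps a fixed sign throughout $(-1,1)$ (or vanishes identically). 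If it were not identically zero, $U_\phi$ would be strictly monotone on $(-1,1)$, contradicting the two equal boundary values $U_\phi(-1) = U_\phi(1) = 0$ established above. Therefore $U_\phi' \equiv 0$, and together with $U_\phi(-1) = 0$ this gives $U_\phi \equiv 0$, i.e. $u_\phi \equiv 0$.

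With no swirl, $u$ is one of the classified solutions $u^{c,\gamma}$, $(c,\gamma)\in I$, of \cite{LLY2}. Combining $\tau = 0$ with the identity $\tau = 2 \pm 2\sqrt{1+c_1}$ (and the sign constraint $\sqrt{1+c_1}\ge 0$) forces $c_1 = 0$, and likewise $\tilde\tau = -2 \pm 2\sqrt{1+c_2} = 0$ forces $c_2 = 0$. Since now $c_1 = c_2 = 0$ and $\lim_{x\to S}|x'|u_\theta = \tau = 0$, the expansions of \cite{LY} (Corollary 2.1) quoted above apply verbatim and yield (\ref{eq:limit}), i.e. $\lim_{x\to S}|u|/\ln|x'| = \lim_{x\to N}|u|/\ln|x'| = -2|c_3|$. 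This establishes the first assertion of the lemma.

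For the single-singularity statement I would argue by contradiction. Take $P = S$ (the case $P = N$ is identical) and suppose $u \in C^2(\mathbb{S}^2\setminus\{S\})$ satisfies $0 < \limsup_{x\to S}|u|/|\ln|x'|| < \infty$. Restricting to $\mathbb{S}^2\setminus\{S,N\}$, the function $u$ is smooth, hence bounded, near $N$, so the growth hypothesis of the lemma holds at both poles; by the first part $u = u^{c,\gamma}$ with $c_1 = c_2 = 0$ and (\ref{eq:limit}) holds. Boundedness near $N$ forces $\lim_{x\to N}|u|/\ln|x'| = 0$, so (\ref{eq:limit}) gives $-2|c_3| = 0$, that is $c_3 = 0$. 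But then (\ref{eq:limit}) at $S$ yields $\lim_{x\to S}|u|/\ln|x'| = 0$, contradicting $\limsup_{x\to S}|u|/|\ln|x'|| > 0$. Hence no such solution exists, which completes the proof.
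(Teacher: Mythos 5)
Your proof is correct, and the main part --- deducing $u_\phi\equiv 0$ by integrating the swirl equation to get $U_\phi'=C\exp\bigl(-\int \tfrac{U_\theta}{1-s^2}\bigr)$, hence monotonicity of $U_\phi$, combined with $U_\phi(\pm 1)=0$ from the logarithmic growth bound --- is exactly the paper's argument, as is the identification $c_1=c_2=0$ via $\tau=\tilde\tau=0$. Where you genuinely diverge is the single-singularity statement: the paper does not invoke (\ref{eq:limit}) there, but instead differentiates the reduced ODE (\ref{eq:NS:1}) in $y$, evaluates at the smooth pole $N$ (where $U_\theta(1)=0$ and $U_\theta''=O(1)$) to force $c_3=0$, and then solves (\ref{eq:NS:1}) explicitly with $c_1=c_2=c_3=0$, finding that every solution is either a Landau solution or the $U_\theta=2(1-y)$ solution with $\limsup=\infty$. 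Your route --- apply the first part on $\mathbb{S}^2\setminus\{S,N\}$, use boundedness at $N$ together with the exact limit $-2|c_3|$ in (\ref{eq:limit}) to get $c_3=0$, then read off $\limsup_{x\to S}|u|/|\ln|x'||=0$ --- is shorter and avoids the explicit ODE solution, at the cost of leaning entirely on the quantitative expansion from Corollary 2.1 of \cite{LY}; the paper's version is self-contained at the ODE level and independently reconfirms which solutions survive when $c_3=0$. Both are valid, and your use of (\ref{eq:limit}) is legitimate since you correctly verify its hypothesis ($c_1=c_2=0$ and $\lim|x'|u_\theta=0$ at both poles) before applying it.
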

\begin{proof}
	Let $u\in C^2(\mathbb{S}^2\setminus\{S, N\})$ be a $(-1)$-homogeneous axisymmetric solution of (\ref{eq:NS}). As in the proof of Theorem \ref{thm:expansion}, (\ref{eq:NS}) is reduced to (\ref{eq:NS:0}) for $y=\cos\theta$ and $U=u\sin\theta$ on $\bS^2$. By the second equation in (\ref{eq:NS:0}), we have 
	\begin{equation*}
		\frac{d}{dy}U_{\phi}(y)=Ce^{-\int_0^y\frac{U_{\theta}(s)}{1-s^2}ds},
	\end{equation*}
	for some constant $C$. So $U_{\phi}$ is monotone in $y\in (-1, 1)$. By the assumption that $\limsup_{|x|=1,x'\to 0}|u(x)|/| \ln |x'||<\infty$, we have $U_{\phi}=u_{\phi}\sin\theta=O(\sin\theta\ln \sin\theta)$ on $\bS^2$. So $U_{\phi}(\pm 1)=0$. Then by the monotonicity of $U_{\phi}$, we must have $U_{\phi}\equiv 0$ and the system (\ref{eq:NS:0}) is further reduced to (\ref{eq:NS:1}). In view of the classification of all $(-1)$-homogeneous axisymmetric no-swirl solutions of (\ref{eq:NS}) on $\bS^2\setminus\{S, N\}$ in \cite{LLY2} (as described above), we have that $u=u^{c, \gamma}$ for some $(c, \gamma)\in I$ with $c_1=c_2=0$. In particular, $u$ satisfies (\ref{eq:limit}). 
 
	Next, for $P=S$ or $N$, we show there does not exist $(-1)$-homogeneous axisymmetric solution $u\in C^2(\mathbb{S}^2\setminus\{P\})$ satisfying $0<\limsup_{|x|=1,x\to P}|u(x)|/| \ln |x'||<\infty$. Without loss of generality, assume $P=S$. By the above argument, we have $u=u^{c, \gamma}$ with $c_1=c_2=0$ and satisfies (\ref{eq:NS:1}). Taking $d/dy$ of (\ref{eq:NS:1}) with $c_1=c_2=0$, we have 
	\[
		(1-y^2)\frac{d^2}{dy^2}U_{\theta}+2U_{\theta}+U_{\theta}\frac{d}{dy}U_{\theta}=-2c_3y. 
	\]
	Since $u$ is smooth at $N$, we have $U_{\theta}(1)=0$ and $d^2U_{\theta}/dy^2=O(1)$ as $y\to 1$. By this and the above equation, we must have $c_3=0$. Then solving (\ref{eq:NS:1}) on $(-1, 1]$ with $c_1=c_2=c_3=0$ and $U_{\theta}(1)=0$, we have $U_{\theta}=2(1-y^2)/(\lambda+y)$ for some $\lambda<-1$ or $\lambda\ge 1$. If $\lambda=1$, then $U_{\theta}=2(1-y)$ and $\limsup_{|x|=1,x\to S}|u(x)|/| \ln |x'||=\infty$. If $\lambda\ne 1$, then $u\in C^{\infty}(\mathbb{S}^2)$, which is a Landau solution. Thus there is no solution $u\in C^2(\mathbb{S}^2\setminus\{S\})$ satisfying $0<\limsup_{|x|=1,x\to S}|u(x)|/| \ln |x'||<\infty$. The proof is finished. 
\end{proof}
Below we give some examples of Type 2 solution with explicit formulas, which exist near $S$ or $N$ on $\mathbb{S}^2$. 

\begin{example}\label{example:least:sing}
	In equation (\ref{eq:NS:1}), set $c_1=c_2=0$. Let $U_\theta (y)= 2(1-y^2)\frac{\chi'(y)}{\chi(y)}$ for some function $\chi(y)$, then (\ref{eq:NS:1}) is converted to 
	\[
		2(1-y^2) \chi''(y) - c_3 \chi(y) = 0. 
	\]
	Let $z := \cos^2\frac{\theta}{2}$, then $y = 2z-1$. Denote $\tilde{\chi}(z) := \chi(y(z)) $. The above equation then becomes
	\[
		2 z (1-z) \frac{d^2 \tilde{\chi}(z)}{dz^2} - c_3 \tilde{\chi}(z) = 0. 
	\]
	This is a hypergeometric differential equation, whose solution $\tilde{\chi}$ can be expressed with the help of hypergeometric functions and Meijer G-functions \cite{OLBC,EMOT}. In general, these functions can not be expressed as elementary functions. According to the above argument about $u^{c, \gamma}$ and Theorem \ref{thm:expansion}, if $c_3>\bar{c}_3(0,0) = -4$, then the solution lies in $C^2(\mathbb{S}^2\setminus\{S, N\})$, otherwise there exist local solutions in $C^2(\mathbb{S}^2\cap B_\delta(S)\setminus\{S\})$ satisfying (\ref{eq:limit}) for some $\delta > 0$. 

	In particular, when $c_3=-4$, there are special solutions explicitly given by 
	\[
		\chi(y) = (1-y^2) \ln ( \frac{1+y}{1-y} ) + 2y + 2\alpha (1-y^2)
	\]
	for any $\alpha\in \R$. Correspondingly, 
	\begin{equation}\label{eq:example:least:sing}
	\left\{
	\begin{aligned}
		& u_\theta = 4\sin\theta \cdot \frac{ 1 - \cos\theta ( \ln( \cot\frac{ \theta}{2} ) + \alpha ) }{ \cos\theta + \sin^2\theta ( \ln( \cot\frac{ \theta}{2} ) + \alpha ) }, \\
		& u_r = -4 - \frac{ 8 \big( 1 - \cos\theta (\ln( \cot\frac{\theta}{2} ) + \alpha ) \big) }{ \big( \cos\theta + \sin^2\theta (\ln( \cot\frac{ \theta}{2} ) + \alpha ) \big)^2 }, \\
		& u_\phi = 0, \\
		& p = \frac{ 8 \big( - 2 + \cos\theta (\ln\cot\frac{\theta}{2} + \alpha ) - \sin^2\theta ( \ln \cot\frac{\theta}{2} + \alpha )^2 \big) }{ \big( \cos\theta + \sin^2\theta ( \ln \cot\frac{ \theta}{2} + \alpha ) \big)^2 }
	\end{aligned}
	\right.
	\end{equation}
	is a special solution of (\ref{eq:NS}) satisfying (\ref{eq:limit}) for $\theta \in (\theta_0,\pi)$, where $\theta_0$ is the unique root of $\cos\theta + \sin^2\theta ( \ln \cot\frac{ \theta}{2} + \alpha )=0$. Thus (\ref{eq:example:least:sing}) is a solution of (\ref{eq:NS}) in $C^2(\mathbb{S}^2 \cap B_\delta(S)\setminus\{S\})$ for some $\delta>0$ depending on $\alpha$. Note that (\ref{eq:example:least:sing}) is also a solution for $\theta \in (0, \theta_0)$, thus is a solution in $C^2(\mathbb{S}^2 \cap B_{\delta'}(N)\setminus\{N\})$ for some $\delta'>0$. This special solution also implies that our removable singularity result Theorem \ref{thm_main} is optimal. Note this solution does not exist on the whole $\mathbb{S}^2 \setminus\{S,N\}$. Indeed, in this example, $c_1=c_2=0$ and $c_3=-4=\bar{c}_3(0,0)$. By Theorem 1.1 in \cite{LLY2}, in this case there exists only one solution of (\ref{eq:NS}) in $C^2(\mathbb{S}^2 \setminus\{S,N\})$, which is given by $u_{\theta}=-4\cot\theta$, $u_{\phi}=0$, $u_r=-4$ and $p=-8\csc^2\theta$. 

	When $c_1=c_2=0$, $c_3=1/2>-4$, there are special solutions in $C^2(\mathbb{S}^2\setminus\{S, N\})$ with explicit but not elementary expressions given by
	\[
	\left\{
	\begin{aligned}
		& u_\theta = \frac{ \sin\theta \big( K( \cos^2\frac{\theta}{2} ) - \alpha K( \sin^2\frac{\theta}{2} ) \big) }{2 \big( E( \cos^2\frac{\theta}{2} ) - \sin^2\frac{\theta}{2} K( \cos^2\frac{\theta}{2} ) + \alpha E( \sin^2\frac{\theta}{2} ) - \alpha \cos^2\frac{\theta}{2} K( \sin^2\frac{\theta}{2} ) \big) }, \\
		& u_r = \frac{-1}{\sin\theta}\frac{d}{d\theta}(\sin\theta u_\theta), \\
		& u_\phi = 0, \\
		& p = \frac{-1}{2} \Big( \frac{d^2 u_r}{d\theta^2} + (\cot\theta-u_\theta) \frac{d u_r}{d\theta} + u_r^2 +u_\theta^2 \Big), 
	\end{aligned}
	\right.
	\]
	for any $0<\alpha <+\infty$, where $K(x)$ and $E(x)$ are respectively the complete elliptic integrals of the first and second kind
	\[
		K(x) = \int_0^{\pi/2} \frac{1}{\sqrt{1-x \sin^2\theta} } d \theta, \qquad 
		E(x) = \int_0^{\pi/2} \sqrt{1-x \sin^2\theta d\theta}, \qquad 0<x<1. 
	\]
	These special solutions lie in $C^2(\mathbb{S}^2\setminus\{S, N\})$ and satisfy (\ref{eq:limit}), which also implies that the removable singularity result Theorem \ref{thm_main} is optimal. For $\alpha=0$ or $+\infty$, the above solution is a solution of (\ref{eq:NS}) in $C^2(\mathbb{S}^2\setminus\{S, N\})$ of Type 3. If $\alpha<0$, then the above solution is a local solution near $N$ or $S$ on $\mathbb{S}^2$. 
\end{example}

\subsubsection{Serrin's solutions}

In a pioneering work \cite{Serrin} concerning a representative model for tornadoes, Serrin studied $(-1)$-homogeneous axisymmetric solutions of (\ref{eq:NS}) in the upper half space $\mathbb{R}^3_+:=\mathbb{R}^3\cap\{x_3>0\}$ with a singular ray along the positive $x_3$-axis and some boundary conditions on $\partial \mathbb{R}^3_+$. Notably, the solutions formulated in the paper exhibit properties different from Landau solutions, where $u_{\theta}=O(|x'| \ln |x'|)$, $u_r=O(\ln |x'|)$ near the north pole $N$ on $\mathbb{S}^2$, and $0<\lim_{x\in \mathbb{S}^2, x\to N}|x'| |u_{\phi}|<\infty$. In particular, Serrin's solutions are of Type 3 behavior mentioned above. 

\subsubsection{Solutions given by Liouville formulas}

In \cite{Sverak}, \v{S}ver\'{a}k proved that all $(-1)$-homogeneous nonzero solutions of (\ref{eq:NS}) in $C^2(\mathbb{R}^3\setminus\{0\})$ are Landau solutions (see Theorem \ref{thm:Sverak}). In his proof, (\ref{eq:NS}) is reduced to 
\begin{equation}\label{eq:liouville:S2}
	-\triangle_{\mathbb{S}^2} \varphi + 2 = 2 e^{\varphi}, \quad \mbox{on } \mathbb{S}^2, 
\end{equation}
where $u=\nabla_{\mathbb{S}^2} \varphi-\Delta_{\mathbb{S}^2} \varphi e_r$ on $\mathbb{S}^2$. It is clear from \cite{Sverak} that if $\varphi$ is a solution to (\ref{eq:liouville:S2}), then $u=\nabla_{\mathbb{S}^2} \varphi-\Delta_{\mathbb{S}^2} \varphi e_r$ is a solution to (\ref{eq:NS}) after being extended to a $(-1)$-homogeneous vector field in $\mathbb{R}^3$. 

The classification of solutions to (\ref{eq:liouville:S2}) is classical. Let $F^{-1}: \mathbb{S}^2\to \R^2$ be the stereographic projection, with $z=(z^1, z^2)=F^{-1}(x)$ given by $z^i = x_i/(1-x_3)$, $i=1, 2$. It is easy to check that for any bounded open set $\mathcal{O}\subset \mathbb{R}^2$, $\xi\in C^2(\mathcal{O})$ is a solution to 
\begin{equation}\label{eq:liouville:R2}
	- \triangle \xi = e^\xi 
\end{equation}
in $\mathcal{O}$, if and only if 
\begin{equation}\label{eq:liouville:phi} 
	\varphi(x):= \xi \circ F^{-1}(x) - 3\ln 2 + 2 \ln (1+| F^{-1}(x) |^2) 
\end{equation}
is a solution to (\ref{eq:liouville:S2}) in $F(\mathcal{O})\subset \mathbb{S}^2$. 

For a simply connected open set $\mathcal{O}\subset \mathbb{R}^2$, it is known (see \cite{Liouville} and \cite{CW}) that all real solutions $\xi\in C^2(\mathcal{O})$ of (\ref{eq:liouville:R2}) are of the form 
\begin{equation}\label{eq:liouville:xi}
	\xi = \ln \frac{8 |f'(z)|^2}{(1+|f(z)|^2)^2}, 
\end{equation}
with $f$ being a locally univalent meromorphic function. Here we have abused notations slightly by identifying $z = z^1 + i z^2$ and using $\mathcal{O}$ also to denote the subset $\{z^1 + i z^2 \mid (z^1, z^2) \in \mathcal{O}\} \subset \mathbb{C}$. In particular, if $\xi$ is singular at some $z$, then $\varphi$ is singular at $F(z)$ on $\mathbb{S}^2$, and the corresponding $u=\nabla_{\bS^2} \varphi-\Delta_{\bS^2} \varphi e_r$ is a solution to (\ref{eq:NS}) with a singularity at $F(z)$ on $\mathbb{S}^2$. In view of this fact, we may construct some special $(-1)$-homogeneous solutions of (\ref{eq:NS}) with arbitrary finite singularities on $\mathbb{S}^2$. 

First, for axisymmetric solutions in $C^2(\mathbb{S}^2\setminus\{N,S\})$, we have

\begin{lem}
	Let $u$ be a $(-1)$-homogeneous solution of (\ref{eq:NS}) given by $u=\nabla_{\bS^2} \varphi-\Delta_{\bS^2} \varphi e_r$ on $\mathbb{S}^2\setminus\{S, N\}$, with $\varphi$ given by (\ref{eq:liouville:phi}) and (\ref{eq:liouville:xi}) for some multi-valued locally univalent meromorphic function $f$ on $\mathbb{C}\setminus\{0\}$. If $u$ is axisymmetric, then $f=a z^{\alpha}$ for some $a\in \mathbb{C}\setminus\{0\}$ and $\alpha\in\mathbb{R}\setminus\{0\}$. 
\end{lem}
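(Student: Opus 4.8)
The plan is to translate the axisymmetry of $u$ into a rotational symmetry of the conformal factor $\xi$ in the stereographic $z$-plane, and then to use the rigidity of the Liouville representation (the spherical metric $\tfrac{8|f'|^2}{(1+|f|^2)^2}|dz|^2$ determines the developing map $f$ up to a rotation of the target sphere) to pin down $f$. First I would extract the structure of $\varphi$ from axisymmetry. Since $u=\nabla_{\mathbb{S}^2}\varphi-\Delta_{\mathbb{S}^2}\varphi\, e_r$, on the unit sphere one has $u_\theta=\partial_\theta\varphi$, $u_\phi=\tfrac{1}{\sin\theta}\partial_\phi\varphi$ and $u_r=-\Delta_{\mathbb{S}^2}\varphi$. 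Axisymmetry means all three are $\phi$-independent; from $\partial_\phi\varphi=(\sin\theta)\,u_\phi$ and $\partial_\theta\varphi=u_\theta$ being $\phi$-independent I obtain $\varphi=l(\theta)+c\,\phi$ for some function $l$ and constant $c$. Under the stereographic projection $z=F^{-1}(x)$ one has $|z|=\cot\tfrac{\theta}{2}$ and $\arg z=\phi$, so in (\ref{eq:liouville:phi}) the term $2\ln(1+|z|^2)$ depends only on $|z|$, and hence $\xi(z)=G(|z|)+c\,\arg z$ for some function $G$. Now $\xi$ solves $-\Delta\xi=e^{\xi}$ on $\mathbb{C}\setminus\{0\}$ (this is the content of (\ref{eq:liouville:R2}) satisfied by (\ref{eq:liouville:xi})); since $\arg z$ is harmonic, the left side $-\Delta G(|z|)$ depends only on $|z|$, whereas the right side equals $e^{G(|z|)}e^{c\,\arg z}$. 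Comparing the $\arg z$-dependence forces $c=0$, so $\xi=\xi(|z|)$ is radial; in particular $u_\phi\equiv 0$, i.e. such a solution has no swirl.

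Next I would exploit the rigidity of the developing map. For each $\beta\in\mathbb{R}$ set $h_\beta(z):=f(e^{i\beta}z)$, so that $|h_\beta'(z)|=|f'(e^{i\beta}z)|$ and radiality of $\xi$ gives
\[
\frac{8|h_\beta'(z)|^2}{(1+|h_\beta(z)|^2)^2}=\frac{8|f'(z)|^2}{(1+|f(z)|^2)^2};
\]
that is, $h_\beta$ and $f$ induce the same spherical metric. By the classification of solutions of (\ref{eq:liouville:R2}) (two locally univalent meromorphic maps producing the same metric differ by post-composition with a rotation of the target sphere, cf. \cite{Liouville,CW}), there is $M_\beta\in PSU(2)$ with $h_\beta=M_\beta\circ f$. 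Uniqueness of $M_\beta$ together with $h_{\beta_1+\beta_2}=M_{\beta_1}\circ h_{\beta_2}$ shows that $\beta\mapsto M_\beta$ is a one-parameter subgroup of $PSU(2)\cong SO(3)$, hence a rotation by angle $\alpha\beta$ about a fixed axis for some $\alpha\in\mathbb{R}$. Post-composing $f$ with a fixed rotation—which leaves $\xi$, and therefore $\varphi$ and $u$, unchanged—I may align this axis with the vertical one, so that $M_\beta(w)=e^{i\alpha\beta}w$ and
\[
f(e^{i\beta}z)=e^{i\alpha\beta}f(z),\qquad \beta\in\mathbb{R}.
\]

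Finally I would solve this functional equation. Working on a simply connected subdomain of $\mathbb{C}\setminus\{0\}$ where the relevant branches are single-valued, set $g(z):=z^{-\alpha}f(z)$; the identity above yields $g(e^{i\beta}z)=g(z)$ for all $\beta$, so $g$ is a meromorphic function invariant under all rotations about the origin. Such a function depends only on $|z|$ and is therefore constant, say $g\equiv a$, whence $f=a z^{\alpha}$; analytic continuation propagates this to all of $\mathbb{C}\setminus\{0\}$. Local univalence forces $f\not\equiv\text{const}$ and $f'\not\equiv 0$, so $a\neq 0$ and $\alpha\neq 0$, and $\alpha\in\mathbb{R}$ from the previous step. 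Undoing the normalizing rotation (which does not change $u$) shows that, up to a rotation of the target sphere, $f=a z^{\alpha}$ with $a\in\mathbb{C}\setminus\{0\}$ and $\alpha\in\mathbb{R}\setminus\{0\}$, as claimed.

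The main obstacle is the rigidity invoked in the second paragraph: that equality of the induced spherical metrics forces $h_\beta=M_\beta\circ f$ for a \emph{single} Möbius rotation $M_\beta$ valid on the whole domain, together with the continuity and group law in $\beta$. The cleanest way to make this rigorous is through the Schwarzian derivative: a direct computation gives $\{f,z\}=\xi_{zz}-\tfrac12\xi_z^2$, which is holomorphic and determined by $\xi$; for radial $\xi=G(|z|)$ this evaluates to the Euler form $\{f,z\}=c_0/z^2$, whose locally univalent solutions are exactly the Möbius transforms of the power maps $z^{\alpha}$. This both produces the functional equation and makes transparent why the $PSU(2)$-ambiguity is harmless, since it leaves $u$ invariant. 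A secondary technical point is the multivaluedness of $f$, which I would handle by carrying out the argument on a simply connected subdomain and then extending $f=a z^{\alpha}$ by analytic continuation.
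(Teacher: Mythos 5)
Your proposal is correct in substance but takes a genuinely different route from the paper's. The first step is common to both: axisymmetry of $u$ forces $\varphi=l(\theta)+c\phi$ and hence (via the equation, or simply single-valuedness of $\xi$) a radial $\xi$ --- a point the paper asserts in one line and you justify more carefully. From there the paper just integrates the radial Liouville ODE: setting $t=\ln|z|$ and $\eta(t)=\xi(e^t)+2t$ reduces (\ref{eq:liouville:R2}) to $-\eta_{tt}=e^{\eta}$, which is solved in closed form to give $\xi=\ln\frac{Ar^{c-2}}{(1+Ar^c)^2}+C$, and this is then recognized as the $\xi$ produced by $f(z)=\sqrt{A}\,z^{c/2}$. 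You instead argue at the level of the developing map: rotational invariance of the metric gives $f(e^{i\beta}z)=M_\beta\circ f$ with $M_\beta\in PSU(2)$, the group law and continuity identify $\beta\mapsto M_\beta$ as a one-parameter rotation group, and the resulting functional equation yields $f=az^{\alpha}$ up to a target rotation. Your route is more structural and, via the Schwarzian identity $\{f,z\}=c_0/z^2$, arguably the cleanest way to see \emph{why} the answer is a power map; but it leans on the uniqueness-up-to-$PSU(2)$ of the developing map on the non-simply-connected domain $\mathbb{C}\setminus\{0\}$ with multi-valued $f$, which is exactly the point you flag as needing care, whereas the paper's two explicit integrations avoid all of that machinery. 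One remark applying to both arguments: since the metric (hence $u$) determines $f$ only up to post-composition with an element of $PSU(2)$, the conclusion can only be that $f$ \emph{may be taken} to be $az^{\alpha}$; your write-up is explicit about this ambiguity, while the paper's proof likewise only exhibits a power map realizing the computed $\xi$ without excluding $M\circ(az^{\alpha})$.
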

\begin{proof}
	Since $u=\nabla_{\bS^2} \varphi-\Delta_{\bS^2} \varphi e_r$ is axisymmetric, $\varphi$ is also axisymmetric. So $\xi$ defined by (\ref{eq:liouville:xi}) is radially symmetric and satisfies (\ref{eq:liouville:R2}) in $\mathbb{R}^2\setminus\{0\}$. Denote $r=|z|$, we have $\xi=\xi(r)$. Let $t=\ln |z|$ and $\eta(t)=\xi(e^t)+2t$, then (\ref{eq:liouville:R2}) is reduced to 
	\[
		- \eta_{tt}=e^{\eta}. 
	\]
	Multiplying both sides of the above by $\eta_t$ and taking the integral, we have $\eta_t^2+2e^{\eta}=const$. Solving for $\eta$, we obtain 
	\[
		\eta=\ln \frac{Ae^{ct}}{(1+Ae^{ct})^2}+C
	\]
	for some constants $A>0, c>0$ and $C$. Then 
	\[
		\xi=\ln\frac{Ar^{c-2}}{(1+Ar^c)^2}+C, \quad \forall r>0.
	\]
	Note this $\xi$ is given by (\ref{eq:liouville:xi}) with $f(z)=\sqrt{A}z^{c/2}$ or $f(z)=z^{-c/2}/\sqrt{A}$. The lemma is proved.
\end{proof}

Now we display some special solutions of (\ref{eq:NS}) given by $u=\nabla_{\bS^2} \varphi-\Delta_{\bS^2} \varphi e_r$ , where $\varphi$ is given by (\ref{eq:liouville:phi}) and (\ref{eq:liouville:xi}) with some locally univalent meromorphic function $f$. 

\begin{example}
	In this example, we construct some special $(-1)$-homogeneous solutions of (\ref{eq:NS}) with singularities at $N$ or $S$ on $\bS^2$ using Liouville formulas, including both axisymmetric and non-axisymmetric solutions. For each $f(z)$, we define $\varphi$ by (\ref{eq:liouville:phi}) and (\ref{eq:liouville:xi}), and construct a corresponding solution of (\ref{eq:NS}) given by $u=\nabla_{\bS^2} \varphi-\Delta_{\bS^2} \varphi e_r$. We display the expression of $u=u_re_r+u_{\theta}e_{\theta}+u_{\phi}e_{\phi}$ corresponding to each $f(z)$ below. 

	\noindent\textbf{\em (a)}
	Take $f(z)=az^{\alpha}$ for some $a\in\mathbb{C}$, $0<|a|<+\infty$ and $\alpha\in\mathbb{R}$, $\alpha\ne 0$. The corresponding solution $(u, p)$ of (\ref{eq:NS}) is 
	\begin{equation}\label{eq:liouville:u}
	\left\{
	\begin{aligned}
		& u_{\theta}=u_{\theta}^{\alpha,|a|} := \frac{2}{\sin\theta} \Big( - \cos\theta + \alpha \frac{ |a|^2\cot^{2\alpha}\frac{\theta}{2} -1 }{ |a|^2\cot^{2\alpha}\frac{\theta}{2}+1} \Big), \\
		& u_{\phi}=0,\\
		& u_r=u_r^{\alpha,|a|} := - 2 + \frac{8\alpha^2}{\sin^2\theta} \cdot \frac{|a|^2 \cot^{2\alpha}\frac{\theta}{2}}{ (1 + |a|^2\cot^{2\alpha}\frac{\theta}{2})^2},\\
	\end{aligned}
	\right.
	\end{equation}
	\begin{equation}
		p=p^{\alpha,|a|}:= u_r - \frac{1}{2} (u_{\theta})^2.
	\end{equation}
	It is obvious that $u^{\alpha,|a|} = u^{-\alpha,1/|a|}$. Landau solutions correspond to the case when $\alpha=\pm 1$. When $\alpha\ne \pm 1$, the above solutions are of Type 3. 
	
	\noindent\textbf{\em (b)} Take $f = a e^{bz}$ for some $a,b\in\mathbb{C}$ satisfying $|a|,|b|\in (0,+\infty)$. The corresponding solution $(u, p)$ of (\ref{eq:NS}) is 
	\[
	\left\{
	\begin{aligned}
		& u_{\theta} = \frac{-2(\cos\theta+1)}{\sin\theta} + \frac{ (b_1 \cos\phi - b_2 \sin\phi) }{ \sin^2\frac{\theta}{2} } \cdot \tanh \Big( \cot(\frac{\theta}{2}) (b_1\cos\phi - b_2\sin\phi) + \ln |a| \Big), \\
		& u_{\phi} = 2 (b_1\sin\phi+b_2\cos\phi)\frac{\cot\frac{\theta}{2}}{\sin\theta} \cdot \tanh \Big( \cot(\frac{\theta}{2}) (b_1\cos\phi - b_2\sin\phi) + \ln |a| \Big), \\
		& u_r = -2 + \frac{|b|^2 }{ 2 \sin^4\frac{\theta}{2} } \cdot \text{sech}^2 \Big( \cot\frac{\theta}{2}(b_1\cos\phi - b_2 \sin\phi) + \ln |a| \Big), 
	\end{aligned}
	\right.
	\]
	and 
	\begin{equation}\label{eq:liouville:p}
		p = -\frac{1}{2} \Big( \frac{d^2 u_r}{d\theta^2} + \cot\theta \frac{d u_r}{d\theta} + \frac{1}{\sin^2\theta} \frac{d^2 u_r}{d\phi^2} - u_\theta \frac{d u_r}{d\theta} - \frac{u_\phi}{\sin\theta} \frac{d u_r}{d\phi} + u_r^2 + u_\theta^2 + u_\phi^2 \Big). 
	\end{equation}
	This solution $u$ is not axisymmetric. 	

	\noindent\textbf{\em (c)} Take $f = e^{z^k}$. Then the corresponding solution of (\ref{eq:NS}) is given by
	\begin{equation}
	\left\{
	\begin{aligned}
		& u_\theta = \frac{-2(\cos\theta+k)}{\sin\theta} + 2k \cos(k\phi) \frac{\cot^k\frac{\theta}{2}}{\sin\theta} \tanh \big( \cot^k(\frac{\theta}{2}) \cos(k\phi) \big), \\
		& u_\phi = 2k \sin(k\phi) \frac{\cot^k(\frac{\theta}{2})}{\sin\theta} \tanh \big( \cot^k(\frac{\theta}{2}) \cos(k\phi) \big), \\
		& u_r = -2 + 2k^2 \frac{\cot^{2k}(\frac{\theta}{2})}{\sin^2\theta} \emph{sech}^2 ( \cot^k(\frac{\theta}{2}) \cos (k\phi) ). 
	\end{aligned}
	\right.
	\end{equation}
	The pressure $p$ can be derived from (\ref{eq:liouville:p}), and the corresponding $u$ is not axisymmetric. 	
\end{example}

\vspace{0.5cm}

It should be noted that if we send $|a|\to 0$ in (\ref{eq:liouville:u}), then 
\begin{equation}\label{eq:liouville:limit}
	\lim_{|a|\to 0}u_{\theta}^{\alpha,|a|}= \frac{-2(\alpha + \cos\theta)}{\sin\theta}, 
\end{equation}
For $\alpha=\pm 1$ and $0<|a|<+\infty$, $u^{\alpha, |a|}$ is a Landau solution. When $\alpha=1$, the limit in (\ref{eq:liouville:limit}) gives a solution $u$ of (\ref{eq:NS}) with 
\[
	u_{\theta}=-\frac{ 2(1+\cos\theta)}{\sin\theta},\quad u_r=-2, \quad u_{\phi}=0, \quad p=-\frac{4(1+\cos\theta)}{\sin^2\theta}, 
\]
which is smooth at south pole and singular at north pole. This solution is given by $u=\nabla_{\bS^2} \varphi-\Delta_{\bS^2} \varphi e_r$ with $\varphi=-2\ln (1-\cos \theta)$, which satisfies $\Delta_{\bS^2}\varphi+2=0$. This $\varphi$ is not given by (\ref{eq:liouville:phi}) and (\ref{eq:liouville:xi}) with any locally univalent meromorphic function $f$. Similar situation holds for $\alpha=-1$, where the corresponding solution is
\[
	u_{\theta}=\frac{2(1-\cos\theta)}{\sin\theta}, \quad u_r=-2, \quad u_{\phi}=0, \quad p=-\frac{4(1-\cos\theta)}{\sin^2\theta},
\]
which is given by $u=\nabla_{\bS^2} \varphi-\Delta_{\bS^2} \varphi e_r$ with $\varphi=-2\ln (1+\cos\theta)$. This solution is smooth at north pole and singular at south pole. These solutions are of Type 3. In particular, these solutions also satisfy Euler's equations. 

We may also construct $(-1)$-homogeneous solutions of (\ref{eq:NS}) with singularities on $\bS^2$ that are not $N$ or $S$.

\begin{thm}\label{thm:3_1}
	Let $m\ge 2$ be an integer, $P_1, ..., P_{m}\in \mathbb{S}^2$ be distinct points, and $\{l_1, ..., l_{m}\}\in\mathbb{Z}\setminus\{0, 1, -1\}$ satisfy $\sum_{j=1}^{m}l_j=m-2$. Then there exists a $(-1)$-homogeneous solution $u\in C^{\infty}(\bS^2\setminus\{P_1, ..., P_{m}\})$ of (\ref{eq:NS}), satisfying 
	\begin{equation}\label{eq:thm:3_1} 
		u=2(|l_j|-1)\nabla_{\bS^2}\ln |x-P_j|+O(1), \quad \textrm{as }x\to P_j \textrm{ on }\bS^2, 
		\quad \forall 1\le j\le m.
	\end{equation}
\end{thm}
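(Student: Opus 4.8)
The plan is to realize $u$ through the Liouville correspondence recorded above: I will produce a function $\varphi$ solving (\ref{eq:liouville:S2}) on $\bS^2\setminus\{P_1,\dots,P_m\}$ with the prescribed logarithmic singularities, and then set $u=\nabla_{\bS^2}\varphi-\Delta_{\bS^2}\varphi\,e_r$. After a rotation of $\bS^2$ I may assume the north pole is none of the $P_j$, so the stereographic projection $F^{-1}$ carries $P_j$ to distinct finite points $z_j\in\mathbb{C}$. By (\ref{eq:liouville:phi}) and (\ref{eq:liouville:xi}) it then suffices to choose a locally univalent meromorphic developing map $f$ whose only failure of local univalence is at the $z_j$, with local degree $|l_j|$ there, and which is locally univalent at $\infty$; then $\xi=\ln\big(8|f'|^2/(1+|f|^2)^2\big)$ has exactly the desired singularities and is smooth elsewhere.

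The concrete choice is dictated by the derivative: I take $f$ with $f'(z)=c\prod_{j=1}^m (z-z_j)^{l_j-1}$. Since $l_j\notin\{0,\pm1\}$, each factor contributes a genuine zero (if $l_j\ge 2$) or pole (if $l_j\le -2$) of order $|l_j-1|\ge 1$, so $f$ has branch order exactly $|l_j|$ at $z_j$ and no other critical points in $\mathbb{C}$. The hypothesis $\sum_j l_j=m-2$ enters precisely here: it forces $f'(z)\sim c\,z^{\sum_j(l_j-1)}=c\,z^{-2}$ as $z\to\infty$, so that $f$ has a finite, locally univalent limit at $\infty$, i.e. the north pole becomes a regular point of the resulting $u$. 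This is the only step that uses the \emph{signed} sum rather than $\sum_j|l_j|$, and it is exactly the Riemann--Hurwitz balance $\sum_j(|l_j|-1)=2\deg f-2$ for a map with these branch orders.

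The asymptotics then follow by unwinding the definitions. Near $z_j$ one has $\xi=2(|l_j|-1)\ln|z-z_j|+O(1)$ with controlled gradient, and since $F^{-1}$ is a smooth conformal diffeomorphism near $P_j$, $\ln|z-z_j|=\ln|x-P_j|+O(1)$; by (\ref{eq:liouville:phi}) the smooth conformal factor adds only an $O(1)$ term, so $\varphi=2(|l_j|-1)\ln|x-P_j|+O(1)$. The radial part of $u$ is controlled directly by the equation: $-\Delta_{\bS^2}\varphi=2e^{\varphi}-2$, and because $|l_j|\ge 2$ gives $e^{\varphi}\sim|x-P_j|^{2(|l_j|-1)}\to0$, we obtain $\Delta_{\bS^2}\varphi=O(1)$. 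Hence $u=\nabla_{\bS^2}\varphi-\Delta_{\bS^2}\varphi\,e_r=2(|l_j|-1)\nabla_{\bS^2}\ln|x-P_j|+O(1)$, which is (\ref{eq:thm:3_1}); smoothness on $\bS^2\setminus\{P_j\}$ and the fact that $u$ solves (\ref{eq:NS}) are then immediate from the Liouville correspondence.

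The hard part will be the \emph{global single-valuedness} of the developing map. Writing $f$ as a primitive of $c\prod_j(z-z_j)^{l_j-1}$ makes $\xi$, hence $\varphi$ and $u$, well defined on $\bS^2\setminus\{P_j\}$ only if $f$ is itself single valued, i.e. if the rational $1$-form $\prod_j(z-z_j)^{l_j-1}\,dz$ has vanishing residue at each pole $z_j$ (equivalently, the $PSU(2)$ monodromy of $f$ is trivial, so that the pulled-back round metric descends). The residue at $\infty$ already vanishes since $f'\sim c\,z^{-2}$, so the finite residues sum to zero; the remaining task is to kill the individual residues. This is the genuinely delicate point: it is where integrality and the sign pattern of the $l_j$, together with $\sum_j l_j=m-2$, must be exploited to select a meromorphic $f$ with the prescribed branch divisor and trivial monodromy (the explicit $f=az^{\alpha}$ and $f=e^{z^{k}}$ families used in the examples illustrate the normalizations available). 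Once single-valuedness is secured, the rest of the argument is the routine verification sketched above.
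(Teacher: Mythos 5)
Your construction is the paper's: after stereographic projection, take a developing map $f$ with $f'(z)=c\prod_j(z-z_j)^{l_j-1}$, read off the conical asymptotics of $\xi=\ln\big(8|f'|^2/(1+|f|^2)^2\big)$, and convert them into (\ref{eq:thm:3_1}) via $u=\nabla_{\bS^2}\varphi-\Delta_{\bS^2}\varphi e_r$ together with $-\Delta_{\bS^2}\varphi=2(e^{\varphi}-1)=O(1)$. (The paper places $P_m$ at the north pole, so $f'$ has only $m-1$ factors and the behavior of $f$ at infinity encodes the singularity at $P_m$; your normalization with all $m$ points finite and $f'\sim cz^{-2}$ at infinity is equivalent bookkeeping.) One small omission on the asymptotic side: you assert that $\hat\xi-2(|l_j|-1)\ln|z-z_j|$ has ``controlled gradient,'' whereas the paper proves this by applying interior elliptic estimates to $\zeta:=\hat\xi-2(|l_j|-1)\ln|z-z_j|$, which is bounded and satisfies $-\Delta\zeta=8(e^{\hat\xi}-1)/(1+|z|^2)^2=O(1)$. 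That step is routine but must be written, since (\ref{eq:thm:3_1}) is a statement about $\nabla_{\bS^2}\varphi$, not about $\varphi$.

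The genuine gap is the one you flagged and then left open: the single-valuedness of $f$, i.e.\ the vanishing of the residue of $\prod_k(t-z_k)^{l_k-1}\,dt$ at each finite $z_j$. Your proposal ends with ``once single-valuedness is secured, the rest is routine,'' so as written it does not prove the theorem. The paper defines $f$ by the path integral (\ref{eq:thm:3_1:f}) and disposes of this in one sentence (``since $l_j\ne 0,1$, $f$ is independent of the path''). That is immediate whenever the integrand is holomorphic at $z_j$ (all finite $l_j\ge 2$, so $f'$ is a polynomial), and also for $m=2$, where $f'$ is a single factor with single-valued primitive $(z-z_1)^{l_1}/l_1$. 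But when a finite $z_j$ carries $l_j\le -2$, the pole of $f'$ there has order $\ge 3$ and the residue is a cross term that need not vanish: for instance with $m=3$ and $(l_1,l_2,l_3)=(-2,5,-2)$, sending $P_3$ to the north pole gives $f'=(z-z_1)^{-3}(z-z_2)^{4}$, whose residue at $z_1$ is $6(z_1-z_2)^2\ne 0$. So the condition ``$l_j\ne 0,1$'' kills only the residue of the leading Laurent term, not of the product, and your instinct that this is the delicate point is correct --- it is exactly where your write-up stops, and it is also the point the paper's argument treats most briefly. To close it along these lines you must either restrict to sign patterns for which $f'$ is holomorphic at every finite $z_j$, or produce a developing map with the prescribed branch divisor and monodromy in $PSU(2)$ (post-composition with a M\"obius map multiplies $f'$ by the nonvanishing factor $M'(f)$ and so changes the residues), which is the nontrivial existence problem for spherical metrics with integer cone angles cited in the remark following the theorem.
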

\begin{proof}
	By rotation of the coordinates, let $P_{m}$ be the north pole $N$. Let $F^{-1}: \mathbb{S}^2\to \R^2$ be the stereographic projection and $z_j:=F^{-1}(P_j)$, $1\le j\le m-1$. Fix any $a\in \mathbb{C}\setminus \{z_1, ..., z_{m-1}\}$ and define 
	\begin{equation}\label{eq:thm:3_1:f}
		f(z):=\int_a^z (t-z_1)^{l_1-1}\cdots (t-z_{m-1})^{l_{m-1}-1} dt, \quad \forall z\in \mathbb{C}\setminus\{z_1, ..., z_{m-1}\},
	\end{equation}
	where the integral path from $a$ to $z$ does not intersect with $\{z_1, ..., z_{m-1}\}$. Since $l_j\ne 0, 1$, $f$ is independent of the path and well-defined in $\mathbb{C}\setminus\{z_1, ..., z_{m-1}\}$. So $f$ is a locally univalent meromorphic function near each $z_j$, $1\le j\le m-1$. 

	Let $\xi(z)$ be defined by (\ref{eq:liouville:xi}) with this $f$, and $\varphi(x)$ be defined by (\ref{eq:liouville:phi}). Then as mentioned earlier, $\xi\in C^{\infty}(\mathbb{C}\setminus\{z_1, ..., z_{m-1}\})$ satisfies (\ref{eq:liouville:R2}) in $\mathbb{C}\setminus\{z_1, ..., z_{m-1}\}$, and $\varphi\in C^{\infty}(\bS^2\setminus\{P_1, ..., P_{m}\})$ satisfies (\ref{eq:liouville:S2}) on $\mathbb{S}^2 \setminus \{P_1, ..., P_{m}\}$. Let $u:=\nabla_{\bS^2} \varphi-\Delta_{\bS^2} \varphi e_r$ on $\mathbb{S}^2\setminus\{P_1, ..., P_{m}\}$ and be extended as a $(-1)$-homogeneous vector field in $\R^3$. Then $u\in C^{\infty}(\bS^2\setminus\{P_1, ..., P_{m}\})$ is a $(-1)$-homogeneous solution of (\ref{eq:NS}) on $\bS^2\setminus\{P_1, ..., P_{m}\}$.

	Now we prove that $u$ satisfies (\ref{eq:thm:3_1}) for all $1\le j\le m$. Let $(r, \theta, \phi)$ be spherical coordinates as usual. Write $x=(\sin\theta\cos\phi, \sin\theta\sin\phi, \cos\theta)^T$ for $x\in\bS^2$ and $z=F^{-1}(x)=\frac{\sin\theta}{1-\cos\theta}(\cos\phi, \sin\phi)^T$. By computation, 
	\begin{equation}\label{eq:thm:3_1:d}
	\begin{split}
		& \partial_{\theta}x=e_{\theta}, \quad \partial_{\phi}x=\sin\theta e_{\phi}, \\
		& \partial_{\theta}z=-\frac{1}{1-\cos\theta}(\cos\phi, \sin\phi)^T, \quad \partial_{\phi}z=\frac{\sin\theta}{1-\cos\theta}(-\sin\phi, \cos\phi)^T. 
	\end{split}
	\end{equation}
	For convenience, denote $A:=(\cos\theta-1)[\partial_\theta z, \partial_{\phi} z/\sin\theta]=
	\begin{bmatrix}
	\cos\phi & \sin\phi \\
	\sin\phi & -\cos\phi
	\end{bmatrix}$. 

	\noindent\textbf{Case 1.} $j\ne m$. 

	We only need to prove it for $j=1$. For $f$ given by (\ref{eq:thm:3_1:f}), we have 
	\begin{equation}\label{eq:thm:3_1:f:prime}
		f'(z)=\prod_{j=1}^m(z-z_j)^{l_j-1}= C l_1(z-z_1)^{l_1-1}+O(|z-z_1|^{l_1}) 
	\end{equation}
	and 
	\begin{equation}\label{eq:thm:3_1:f:2}
		f(z)=C(z-z_1)^{l_1}+O(|z-z_1|^{l_1+1}) + O(1)
	\end{equation}
	as $z\to z_1$, where $C = \frac{1}{l_1}\prod_{j=2}^m(z_1-z_{j})^{l_{j}-1}$. Denote
	\begin{equation}\label{eq:thm:3_1:xi:2}
		\hat{\xi}(z):=\xi(z)-3\ln 2+2\ln (1+|z|^2)=\ln \frac{|f'(z)|^2(1+|z|^2)^2}{(1+|f(z)|^2)^2}.
	\end{equation}
	By the definition of $\varphi$, we have $\varphi(x)=\hat{\xi}\circ F^{-1}(x)$, $x\in\mathbb{S}^2$. We first claim that 
	\begin{equation}\label{eq:thm:3_1:xi:3}
		\hat{\xi}=2(|l_1|-1)\ln |z-z_1|+O(1), \quad \textrm{as }z\to z_1.
	\end{equation}
	Indeed, by (\ref{eq:thm:3_1:f:prime}) and (\ref{eq:thm:3_1:xi:2}), we have 
	\begin{equation}\label{eq:thm:3_1:xi:4}
		\hat{\xi}= \ln \frac{|z-z_1|^{2(l_1-1)}}{(1+|f(z)|^2)^2}+O(1), 
		\quad \textrm{ as }z\to z_1.
	\end{equation}
	Note $l_1\ne 0, 1, -1$. If $l_1\ge 2$, (\ref{eq:thm:3_1:xi:3}) directly follows from (\ref{eq:thm:3_1:f:2}) and (\ref{eq:thm:3_1:xi:4}). If $l_1\le -2$, then by (\ref{eq:thm:3_1:f:2}) and (\ref{eq:thm:3_1:xi:4}), we have
	\[
		\hat{\xi}(z)=\ln \frac{|z-z_1|^{2(l_1-1)}|z-z_1|^{-4l_1}}{(1+|f(z)|^2)^2|z-z_1|^{-4l_1}}+O(1)
		= - 2 (l_1+1)\ln |z-z_1|+O(1).
	\]
	So (\ref{eq:thm:3_1:xi:3}) holds. 

	Next, let $\zeta(z):=\hat{\xi}(z)-2(|l_1|-1)\ln |z-z_1|$ and $\delta>0$ be small enough such that $z_j\notin B_{\delta}(z_1)$ for $j\ne 1$. Then by (\ref{eq:liouville:R2}), (\ref{eq:thm:3_1:xi:2}) and (\ref{eq:thm:3_1:xi:3}), we have 
	\[
		-\Delta\zeta=-\Delta\hat{\xi}= \frac{8(e^{\hat{\xi}}-1)}{(1+|z|^2)^2}=O(1), \quad \textrm{ in }B_\delta(z_1).
	\]
	Note $\zeta=O(1)$ in $B_{\delta}(z_1)$. By elliptic estimate, we have $\nabla \zeta=O(1)$ in $B_{\delta}(z_1)$. Thus 
	\begin{equation}\label{eq:thm:3_1:nabla:xi}
		\nabla \hat{\xi}=2(|l_1|-1)\nabla \ln |z-z_1|+\nabla \zeta=2(|l_1|-1)\frac{z-z_1}{|z-z_1|^2}+O(1), \quad \textrm{ as }z\to z_1. 
	\end{equation}
	Recall $u=\nabla_{\bS^2} \varphi-\Delta_{\bS^2} \varphi e_r$. To prove (\ref{eq:thm:3_1}), we will show $\Delta_{\bS^2} \varphi =O(1)$ and $\nabla_{\bS^2} \varphi=2(|l_1|-1)\nabla_{\bS^2}\ln |x-P_1|+O(1)$ as $x\to P_1$ on $\bS^2$. 

	Note $\varphi(x)=\hat{\xi}(z)$ and $x=F(z)\to P_1$ ans $z\to z_1$. By (\ref{eq:liouville:S2}), (\ref{eq:thm:3_1:xi:3}) and the fact $l_1\ne 0$, we have $-\Delta_{\bS^2}\varphi=2(e^{\varphi}-1)=O(1)$ as $x\to P_1$. 
	
	Next, we estimate $\nabla_{\bS^2}\varphi$ as $x\to P_1$ from all directions on $\bS^2$. For each fixed $z_0\ne z_1$, let $z=\tilde{\gamma}(t):=tz_0+(1-t)z_1$, $0\le t\le 1$, and $\gamma(t)=F(\tilde{\gamma}(t))$. Using (\ref{eq:thm:3_1:d}), it can be verified
	\begin{equation}\label{eq:thm:3_1:nabla:phi}
		\nabla_{\bS^2}\varphi=\partial_{\theta}\varphi e_{\theta}+\frac{1}{\sin\theta}\partial_{\phi}\varphi e_{\phi}
		=\nabla\hat{\xi}\cdot \partial_{\theta}z e_{\theta}+\frac{1}{\sin\theta}\nabla\hat{\xi}\cdot\partial_{\phi}z e_{\phi}
		=-\frac{1}{1-\cos\theta}[e_{\theta}, e_{\phi}] A^T \nabla\hat{\xi}. 
	\end{equation}
	On the other hand, write $x-P_1=\gamma(t)-\gamma(0)=\gamma'(t)t+O(t^2)$. By (\ref{eq:thm:3_1:d}), we have $ \gamma'(t) =\partial_{\theta}x\theta'(t)+\partial_{\phi}x\phi'(t)=[e_\theta, e_{\phi}]
	\begin{bmatrix}
	\theta'(t) \\
	\sin\theta\phi'(t)
	\end{bmatrix}$, and 
	\[
	\begin{split}
		z-z_1 & = (z_0-z_1)t=\tilde{\gamma}'(t)t=(\theta'(t)\partial_{\theta}z+\phi'(t)\partial_{\phi}z)t 
		=-\frac{t}{1-\cos\theta}A
		\begin{bmatrix}
			\theta'(t)\\
			\sin\theta\phi'(t)
		\end{bmatrix}\\
		& =-\frac{1}{1-\cos\theta}A
		\begin{bmatrix}
			e_{\theta}^T\\
			e_{\phi}^T
		\end{bmatrix}\gamma'(t)t=-\frac{1}{1-\cos\theta}A
		\begin{bmatrix}
			e_{\theta}^T\\
			e_{\phi}^T
		\end{bmatrix}(x-P_1)+O(t^2).
		\end{split}
	\]
	Using (\ref{eq:thm:3_1:nabla:phi}), (\ref{eq:thm:3_1:nabla:xi}), the above, the fact that $A^TA=I$, 
	we have 
	\[
	\begin{aligned}
		\nabla_{\bS^2}\varphi & =-\frac{2(|l_1|-1)}{1-\cos\theta}[e_{\theta}, e_{\phi}]A^T\frac{z-z_1}{|z-z_1|^2}+O(1) \\
		& =2(|l_1|-1)[e_{\theta}, e_{\phi}] 
		\begin{bmatrix}
		e_{\theta}^T \\
		e_{\phi}^T
		\end{bmatrix}\frac{x-P_1}{|x-P_1|^2}+O(1).
	\end{aligned}
	\]
	By (\ref{eq:thm:3_1:d}), 
	\begin{equation*}
	\begin{split}
		\nabla_{\bS^2}\ln |x-P_j| & =\partial_{\theta}\ln |x-P_j| e_{\theta}+\frac{1}{\sin\theta}\partial_{\phi}\ln |x-P_j| e_{\phi}\\
		& =\frac{(x-P_j)\cdot \partial_{\theta}x}{|x-P_j|^2}e_{\theta}+\frac{(x-P_j)\cdot \partial_{\phi}x}{\sin\theta|x-P_j|^2}e_{\phi}=[e_{\theta}, e_{\phi}] \begin{bmatrix}
		e_{\theta}^T\\
		e_{\phi}^T
		\end{bmatrix}\frac{x-P_j}{|x-P_j|^2}. 
	\end{split}
	\end{equation*}
	So we have proved (\ref{eq:thm:3_1}) in Case 1. 

	\noindent\textbf{Case 2.} $j=m$.
	
	Similar to Case 1, we only need to estimate $\Delta_{\bS^2}\varphi$ and $\nabla_{\bS^2}\varphi$ as $x\to P_{m}$. In this case, $P_{m}=N$. As $x\to N$, $|z|\to \infty$. Let $l=\sum_{j=1}^{m-1}l_j-m+2$. By the definition of $f$, we have
	\begin{equation*}
		f'(z)=z^{l-1}+O(|z|^{l-2})\quad \textrm{and }\quad f(z)= \frac{1}{l}z^{l}+O(|z|^{l-1})+O(1) 
	\end{equation*}
	as $|z|\to \infty$. Plug this into (\ref{eq:thm:3_1:xi:2}), we have 
	\begin{equation}\label{eq:thm:3_1:xi:5}
		\hat{\xi}=(2-2|l|)\ln |z|+O(|z|^{-1}). 
	\end{equation}
	Since $|l|\ge 2$, we have $e^{\hat{\xi}}=|z|^{2-2|l|}e^{O(1/|z|)}=O(1)$. So $\Delta_{\bS^2}\varphi=2(e^{\varphi}-1)=O(1)$.

	Now we estimate $\nabla_{\bS^2}\varphi$. Fix $z_0$ such that $\rho:=|z_0|/3>>1$ and $z_j\notin B_1(z_0)$ for $1\le j\le m-1$. Let $\zeta(y):=\hat{\xi}(z_0+\rho y)-(2-2|l|)\ln |z_0+\rho y|$ for $y\in B_1(0)$. Similar as the proof of (\ref{eq:thm:3_1:nabla:xi}), by (\ref{eq:liouville:R2}) and (\ref{eq:liouville:phi}) we have 
	\[
		- \Delta_y\zeta 
		= - \rho^2\Delta\hat{\xi}(z_0+\rho y)=\frac{8\rho^2(e^{\hat{\xi}}-1)}{(1+|z_0+\rho y|^2)^2}=O(\rho^{-2}), \quad y\in B_1(0). 
	\]
	By (\ref{eq:thm:3_1:xi:5}), we have $\zeta(y)=O(\rho^{-1})$ in $B_1(0)$. By elliptic estimate, we have $\nabla_y\zeta(y)=O(\rho^{-1})$ in $B_1(0)$. Thus 
	\begin{equation*}
		\nabla_z\hat{\xi}=(2-2|l|) \frac{z}{|z|^2}+\frac{1}{\rho}\nabla_y\zeta=(2-2|l|) \frac{z}{|z|^2}+O(\rho^{-2})=(2-2|l|) \frac{z}{|z|^2}+O(|z|^{-2}), 
	\end{equation*}
	for $z\in B_1(z_0)$. By (\ref{eq:thm:3_1:nabla:phi}), the above, the facts that $z=\frac{\sin\theta}{1-\cos\theta}(\cos\phi, \sin\phi)^T$ and $1-\cos\theta=\frac{2}{1+|z|^2}$, we have 
	\[
	\begin{split}
		\nabla_{\bS^2}\varphi & 
		= - \frac{2(1-|l|)}{1-\cos\theta}[e_{\theta}, e_{\phi}] A^T \frac{z}{|z|^2}+\frac{O(1)}{|z|^2(1-\cos\theta)}=\frac{2(|l|-1)}{\sin\theta}e_{\theta}+O(1).
	\end{split}
	\]
	Note $x-N=(\sin\theta\cos\phi, \sin\theta\sin\phi, \cos\theta-1)^T$, we have 
	\[
		\nabla_{\bS^2}\ln |x-N|=\frac{1}{2}\partial_{\theta}\ln (\sin\theta^2+(1-\cos\theta)^2)e_{\theta}=\frac{1+\cos\theta}{2\sin\theta}e_{\theta}=\frac{1}{\sin\theta}e_{\theta}+O(1).
	\]
	Thus we have 
	\[
		\nabla_{\bS^2}\varphi=\frac{2(|l|-1)}{\sin\theta}e_{\theta}+O(1)=2(|l|-1)\nabla_{\bS^2}\ln |x-N|+O(1). 
	\]
	Note $l_{m}=-l$, then (\ref{eq:thm:3_1}) for $j=m$ follows from the above. 
\end{proof}

\begin{rmk}
	For the existence and nonexistence of solutions $\varphi$ of (\ref{eq:liouville:S2}) on $\bS^2\setminus\{P_1, ..., P_m\}$ satisfying $\nabla_{\bS^2}\varphi=2(|\alpha_j|-1)\nabla_{\bS^2} \ln |x-P_j|+O(1)$ for $\alpha_j\in \R$, $1\le j\le m$, see e.g. \cite{BMM, BT, LuoTian, MZ, MP1, MP2, Troyanov} and the references therein. 
\end{rmk}

\subsubsection{Solutions that are also solutions of Euler's equation}

Consider $(-1)$-homogeneous axisymmetric no-swirl solutions of the stationary Euler's equation 
\[
\left\{
	\begin{aligned}
	& ( u \cdot \nabla ) u + \nabla p = 0, \\ 
	& \dive u=0.
	\end{aligned}
\right.
\]
Let $U_{\theta}= u_{\theta}\sin\theta$ and $y=\cos\theta$. The system on $\bS^2$ can be reduced to 
\begin{equation}
	\frac{1}{2}U^2_{\theta}=c_0+c_1y+c_2y^2, \quad u_r= \frac{d}{dy}U_{\theta},
\end{equation}
for some constants $c_0, c_1, c_2$, and the solution is given by 
\[
	u_{\theta}=\pm \frac{\sqrt{2(c_0 + c_1 \cos\theta + c_2 \cos^2\theta)}}{\sin\theta}, \quad u_r=\pm \frac{c_1+2c_2\cos\theta}{\sqrt{2(c_0 + c_1 \cos\theta + c_2 \cos^2\theta)}}, \quad u_{\phi}=0, 
\]
and $p$ is determined by (\ref{eq:liouville:p}). In particular, when the polynomial $2(c_0 + c_1 y + c_2 y^2)=(ay+b)^2$ for some $a, b\in\mathbb{R}$, the above solution is also a solution of Navier-Stokes equations (\ref{eq:NS}), where 
\[
	u_{\theta}=\frac{a\cos\theta+b}{\sin\theta}, \quad u_r=a, \quad u_{\phi}=0, \quad p=-\frac{a^2+b^2+2ab\cos\theta}{2\sin^2\theta}. 
\]
These solutions are of Type 3.


\begin{thebibliography}{99}

\bibitem{Abe} K. Abe, Existence of homogeneous Euler flows of degree $-\alpha \notin [-2, 0]$, \emph{Arch. Ration. Mech. Anal.}, 248 (2024), No. 30.

\bibitem{BMM} D. Bartolucci, F. De Marchis, and A. Malchiodi, Supercritical conformal metrics on surfaces with conical singularities, \emph{Int. Math. Res. Not.}, 2011 (2011), No. 24, 5625--5643.

\bibitem{BT} D. Bartolucci and G. Tarantello, Liouville type equations with singular data and their applications to periodic multivortices for the electroweak theory, \emph{Comm. Math. Phys.}, 229 (2002), 3--47.

\bibitem{CKPW} M. Cannone, G. Karch, D. Pilarczyk, and G. Wu, Stability of singular solutions to the Navier-Stokes system, \emph{J. Differential Equations}, 314 (2022), 316--339.

\bibitem{CK00} H. J. Choe and H. Kim, Isolated singularity for the stationary Navier-Stokes system, \emph{J. Math. Fluid Mech.}, 2 (2000), 151--184.

\bibitem{CW} K. S. Chou and T. Y. H. Wan, Asymptotic radial symmetry for solutions of $\Delta u + e^u = 0$ in a punctured disc, \emph{Pacific J. Math.}, 163 (1994), 269--276.

\bibitem{DE70} R. H. Dyer and D. E. Edmunds, Removable singularities of solutions of the Navier-Stokes equations, \emph{J. London Math. Soc.}, 2 (1970), 535--538.

\bibitem{EMOT} A. Erd\'elyi, W. Magnus, F. Oberhettinger, and F. Tricomi, \emph{Higher Transcendental Functions. Vol. I.} Bateman Manuscript Project. New York: McGraw-Hill, 1953.

\bibitem{Galdi} G. P. Galdi, \emph{An Introduction to the Mathematical Theory of the Navier-Stokes Equations. Steady-State Problems.} 2nd ed., Springer Monographs in Mathematics, Springer Verlag, New York, 2011.

\bibitem{Goldshtik} M. A. Goldshtik, A paradoxical solution of the Navier-Stokes equations, \emph{Prikl. Mat. Mekh.}, 24 (1960), 610--621. \emph{Transl., J. Appl. Math. Mech. (USSR)}, 24 (1960), 913--929.

\bibitem{Gusarov} A. V. Gusarov, Analytic similarity solutions of the Navier-Stokes equations for a jet in a half space with the no-slip boundary condition, \emph{Phys. Fluids}, 32 (2020), 053104.

\bibitem{JLL} Z. Jiang, L. Li, and W. Lu, Existence of axisymmetric and homogeneous solutions of Navier-Stokes equations in cone regions, \emph{Discrete Contin. Dyn. Syst. Ser. S}, 14 (2021), No. 12, 4231--4258.

\bibitem{KP} G. Karch and D. Pilarczyk, Asymptotic stability of Landau solutions to Navier-Stokes system, \emph{Arch. Ration. Mech. Anal.}, 202 (2011), 115--131.

\bibitem{KK06} H. Kim and H. Kozono, A removable isolated singularity theorem for the stationary Navier-Stokes equations, \emph{J. Differential Equations}, 220 (2006), 68--84.

\bibitem{KS11} A. Korolev and V. \v{S}ver\'{a}k, On the large-distance asymptotic of steady state solutions of the Navier-Stokes equations in 3D exterior domains, \emph{Ann. Inst. H. Poincar\'{e} Anal. Non Lin\'{e}aire}, 28 (2011), 303--313.

\bibitem{KT} H. Kwon and T.-P. Tsai, On bifurcation of self-similar solutions of the stationary Navier-Stokes equations, \emph{Commun. Math. Sci.}, 19 (2021), No. 6, 1703--1733.

\bibitem{Landau} L. Landau, A new exact solution of Navier-Stokes equations, \emph{Dokl. Akad. Nauk SSSR}, 43 (1944), 299--301.

\bibitem{LLY1} L. Li, Y. Y. Li, and X. Yan, Homogeneous solutions of stationary Navier-Stokes equations with isolated singularities on the unit sphere. I. One singularity, \emph{Arch. Ration. Mech. Anal.}, 227 (2018), 1091--1163.

\bibitem{LLY2} L. Li, Y. Y. Li, and X. Yan, Homogeneous solutions of stationary Navier-Stokes equations with isolated singularities on the unit sphere. II. Classification of axisymmetric no-swirl solutions, \emph{J. Differential Equations}, 264 (2018), No. 10, 6082--6108.

\bibitem{LLY3} L. Li, Y. Y. Li, and X. Yan, Homogeneous solutions of stationary Navier-Stokes equations with isolated singularities on the unit sphere. III. Existence of axisymmetric solutions with nonzero swirl, \emph{Discrete Contin. Dyn. Syst. Ser. A}, 39 (2019), 7163--7211.

\bibitem{LLY4} L. Li, Y. Y. Li, and X. Yan, Vanishing viscosity limit for homogeneous axisymmetric no-swirl solutions of stationary Navier-Stokes equations, \emph{J. Functional Analysis}, 277 (2019), 3599--3652.

\bibitem{LY} Y. Y. Li and X. Yan, Asymptotic stability of homogeneous solutions of incompressible stationary Navier-Stokes equations, \emph{J. Differential Equations}, 297 (2021), 226--245.

\bibitem{LY7} Y. Y. Li and X. Yan, Anisotropic Caffarelli-Kohn-Nirenberg type inequalities, \emph{Adv. Math.}, 419 (2023), 108958.

\bibitem{LZZ} Y. Y. Li, J. Zhang, and T. Zhang, Asymptotic stability of Landau solutions to Navier-Stokes system under $L^p$-perturbations, \emph{J. Math. Fluid Mech.}, 25 (2023), No. 5. 

\bibitem{Liouville} J. Liouville, Sur l'\'{e}quation aux diff\'{e}rences partielles $\partial^2 \log \lambda/\partial u \partial v \pm \lambda/2a^2 = 0$, \emph{J. Math. Pures Appl.}, 18 (1853), 71--72. 

\bibitem{LS} X. Luo and R. Shvydkoy, 2D homogeneous solutions to the Euler equation, \emph{Comm. Partial Differential Equations}, 40 (2015), 1666--1687.

\bibitem{LuoTian} F. Luo and G. Tian, Liouville equation and spherical convex polytopes, \emph{Proc. Amer. Math. Soc.}, 116 (1992), 1119--1129.

\bibitem{MZ} R. Mazzeo and X. Zhu, Conical metrics on Riemann surfaces, II: Spherical metrics, \emph{Int. Math. Res. Not.}, 2022 (2022), No. 12, 9044--9113.

\bibitem{MT12} H. Miura and T.-P. Tsai, Point singularities of 3D stationary Navier-Stokes flows, \emph{J. Math. Fluid Mech.}, 14 (2012), 33--41.

\bibitem{MP1} G. Mondello and D. Panov, Spherical metrics with conical singularities on a 2-sphere: Angle constraints, \emph{Int. Math. Res. Not.}, 2016 (2016), No. 16, 4937--4995.

\bibitem{MP2} G. Mondello and D. Panov, Spherical surfaces with conical points: Systole inequality and moduli spaces with many connected components, \emph{Geom. Funct. Anal.}, 29 (2019), No. 4, 1110--1193. 

\bibitem{NP00} S. A. Nazarov and K. Pileckas, On steady Stokes and Navier-Stokes problems with zero velocity at infinity in a three-dimensional exterior domain, \emph{J. Math. Kyoto Univ.}, 40, (2000), No. 3, 475--492.

\bibitem{OLBC} F. W. Olver, D. W. Lozier, R. F. Boisvert, and C. W. Clark, editors, \emph{NIST Handbook of Mathematical Functions.} Cambridge University Press, 2010.

\bibitem{PP1} A. F. Pillow and R. Paull, Conically similar viscous flows. Part 1. Basic conservation principles and characterization of axial causes in swirl-free flow, \emph{J. Fluid Mech.}, 155 (1985), 327--341.

\bibitem{PP2} A. F. Pillow and R. Paull, Conically similar viscous flows. Part 2. One-parameter swirl-free flows, \emph{J. Fluid Mech.}, 155 (1985), 343--358.

\bibitem{PP3} A. F. Pillow and R. Paull, Conically similar viscous flows. Part 3. Characterization of axial causes in swirling flow and the one-parameter flow generated by a uniform half-line source of kinematic swirl angular momentum, \emph{J. Fluid Mech.}, 155 (1985), 359--379.

\bibitem{Serrin} J. Serrin, The swirling vortex, \emph{Philos. Trans. R. Soc. Lond. Ser. A}, \emph{Math. Phys. Sci.}, 271 (1972), 325--360.

\bibitem{Shapiro1} V. Shapiro, Isolated singularities of solutions of the nonlinear stationary Navier-Stokes equations, \emph{Trans. Amer. Math. Soc.}, 187 (1974), 335--363.

\bibitem{Shapiro2} V. Shapiro, Isolated singularities in steady-state fluid flow, \emph{SIAM J. Math. Anal.}, 7 (1976), 577--601.

\bibitem{Shapiro3} V. Shapiro, A counterexample in the theory of planar viscous incompressible flow, \emph{J. Differential Equations}, 22 (1976), 164--179.

\bibitem{Shvydkoy} R. Shvydkoy, Homogeneous solutions to the 3D Euler system, \emph{Trans. Amer. Math. Soc.}, 370 (2018), 2517--2535.

\bibitem{Slezkin} N. A. Slezkin, On an exact solution of the equations of viscous flow, \emph{Uch. zap. MGU}, (1934), 89--90.

\bibitem{Squire} H. B. Squire, The round laminar jet, \emph{Quart. J. Mech. Appl. Math.}, 4 (1951), 321--329.

\bibitem{Sverak} V. \v{S}ver\'{a}k, On Landau's solutions of the Navier-Stokes equations, \emph{Problems in Mathematical Analysis}, No. 61, \emph{J. Math. Sci. (N. Y.)}, 179 (2011), 208--228. 

\bibitem{SverakTsai} V. \v{S}ver\'{a}k and T.-P. Tsai, On the spatial decay of 3-D steady-state Navier-Stokes flows, \emph{Comm. Partial Differential Equations}, 25 (2000), No. 11-12, 2107--2117. 

\bibitem{TianXin} G. Tian and Z. P. Xin, One-point singular solutions to the Navier-Stokes equations, \emph{Topol. Methods Nonlinear Anal.}, 11 (1998), 135--145.

\bibitem{Troyanov} M. Troyanov, Prescribing curvature on compact surfaces with conical singularities, \emph{Trans. Amer. Math. Soc.}, 324 (1991), 793--821.

\bibitem{Wang} C. Y. Wang, Exact solutions of the steady-state Navier-Stokes equation, \emph{Annu. Rev. Fluid Mech.}, 23 (1991), 159--177.

\bibitem{Yatseyev} V. I. Yatseyev, On a class of exact solutions of the equations of motion of a viscous fluid, 1950.

\bibitem{ZZ} J. Zhang and T. Zhang, Local well-posedness of perturbed Navier-Stokes system around Landau solutions, \emph{Electron. Res. Arch.}, 29 (2021), No. 4, 2719--2739.


\end{thebibliography}
\end{document}